\documentclass[leqno]{amsart}
\usepackage{amsfonts,amssymb,amsmath,amsgen,amsthm}
\usepackage{fancyhdr}

\theoremstyle{plain}
\newtheorem{theorem}{Theorem}[section]
\newtheorem{definition}[theorem]{Definition}
\newtheorem{assumption}[theorem]{Assumption}
\newtheorem{lemma}[theorem]{Lemma}
\newtheorem{corollary}[theorem]{Corollary}
\newtheorem{proposition}[theorem]{Proposition}

\theoremstyle{remark}
\newtheorem{remark}[theorem]{Remark}
\newtheorem*{notation}{Notation}
\newtheorem{example}[theorem]{Example}

\def\Tend#1#2{\mathop{\longrightarrow}\limits_{#1\rightarrow#2}}

\font\tenms=msbm10
\font\sevenms=msbm7
\font\fivems=msbm5
\newfam\bbfam \textfont\bbfam=\tenms \scriptfont\bbfam=\sevenms
\scriptscriptfont\bbfam=\fivems

\let\vf=\varphi
\let\om=\omega

\def\eps{\varepsilon}

\def\cdotv{\raise 2pt\hbox{,}}

\setlength{\unitlength}{1cm}

\def\C{{\mathbf C}}
\def\R{{\mathbf R}}
\def\N{{\mathbf N}}
\def\Sch{{\mathcal S}}

\def\virgp{\raise 2pt\hbox{,}}

\def\({\left(}
\def\){\right)}
\def\<{\left\langle}
\def\>{\right\rangle}
\def\le{\leqslant}

\def\d{{\partial}}

\def\Tend#1#2{\mathop{\longrightarrow}\limits_{#1\rightarrow#2}}

\pagestyle{fancy}
\fancyhf{}

\chead[L.HARI]{COHERENT STATES FOR SYSTEMS OF $L^2-$SUPERCRITICAL NLS}
\fancyhead[LE,RO]{\thepage}

\numberwithin{equation}{section}

\title{Coherent states for systems of $L^2-$supercritical nonlinear Schr\"odinger equations.}
\author[L. ~Hari]{Lysianne Hari}
\address[L. Hari]{University of Cergy-Pontoise\\
                  UMR CNRS 8088\\
                  F-95000 Cergy-Pontoise.}
\email{Lysianne.Hari@u-cergy.fr}

\setlength{\headheight}{12pt}

\begin{document}
\begin{abstract}
We consider the propagation of wave packets for a nonlinear Schr\"o\-dinger equation, with a matrix-valued potential, 
in the semi-classical limit. For a matrix-valued potential, Strichartz estimates are available under long range assumptions. 
Under these assumptions, for an initial coherent state polarized along an eigenvector, 
we prove that the wave function remains in the same eigenspace, in a scaling such that nonlinear effects cannot be 
neglected. 
We also prove a nonlinear superposition principle for these nonlinear wave packets.
\end{abstract}
\maketitle

\section{Introduction}
\label{Intro}
We consider the semi-classical limit $\eps\to 0$ for the nonlinear
Schr\"odinger equation 
\begin{equation}
\label{NLS0}
     i\eps\d_t \psi^\eps - P(\eps)\psi^\eps = \Lambda \eps ^{\beta}|\psi^\eps |_{\C^2}^{2} \psi^\eps
     \quad ; \quad
     \psi^\eps_{\mid t=0} = \psi^\eps_{0},
\end{equation}
where  $\Lambda \geq 0$,  $d\in \left\lbrace 2, 3 \right\rbrace$. The data $\psi^\eps_0$ and the solution
$\psi^\eps(t)$ are vectors of $\C^2$.  
The quantity $|\psi^\eps|_{\C^2}^{2}$ denotes the square of the
Hermitian norm in $\C^2$ of the vector~$\psi^\eps$, $P(\eps)$ is a matrix-valued Schr\"odinger operator acting on 
$L^2 (\R^d, \C^2)$,
$$ P(\eps) = - \dfrac{\eps^2}{2} \Delta \; \textrm{Id} \; + V(x),$$
where $V$ is a self-adjoint smooth $2 \times 2$ matrix depending on the parameter $x \in \R^d$ 
and the semiclassical parameter $\eps >0$ is small.
\\
\noindent
The data $\psi_0^{\eps}$ is a wave packet (or can be a perturbation or a sum of two wave packets) :
\begin{equation}
\label{data}
\psi^\eps_0(x)= \eps^{-d/4} e^{i\xi_0^+.
    (x-x_0^+)/\eps}a
\left(\frac{x-x_0^+}{\sqrt\eps}\right) \chi_+(x).
\end{equation}
The profile $a$ belongs to the Schwartz class, $a\in{\mathcal S}(\R^d)$,
and the initial datum is polarized along the eigenvector associated with $\lambda_+(x)$, 
$\chi_+(x)\in{\mathcal C} ^\infty(\R^d, \C^2)$:
$$V(x)\chi_+(x)=\lambda_+(x) \chi_+(x),\quad \text{ with }| \chi_+(x)|_{\C^2}=1.$$ 
We choose the critical exponent $\beta = \beta_c := 1+d/2$ : in the scalar case, the approximation of $\psi^\eps$ is a wave packet 
whose envelope satisfies a nonlinear equation, nonlinear effects can not be neglected (see \cite{CF-p}).
Moreover, let us notice that, contrary to the case $d=1$, the nonlinearity is not $L^2$-subcritical when $d=2$ or $3$, 
it is only $H^1$-subcritical. 
So the condition $\Lambda \geq 0$ is crucial here to avoid finite time blow-up (see \cite{Ca-p} and \cite{Glassey}).
\\
\noindent
The aim of the paper is to prove that the solutions of \eqref{NLS0} with initial data which are of the form \eqref{data} 
keep the same form and remain in the same eigenspace, when the potential satisfies assumptions that we are now going to explain. 
Note that the scalar case is studied in \cite{CF-p} 
and that matrix case, with $d=1$ is analysed by the authors of \cite{Adiab}.
\\
\noindent
We write $V$ as
$$ V(x) = \rho_0 (x) \; \textrm{Id} \; + \begin{pmatrix}
                                      \rho(x) & \om(x)\\
		
		      \om(x) & - \rho (x)
                                      \end{pmatrix}
$$
where the functions $\rho_0, \rho$ and $\om$ are smooth, and we make the following assumptions on $V$ :
\begin{assumption}
 \label{assumption}
$(i) \; V$ is long range : there exist a matrix $V_\infty$ and $p \in \R^+ \setminus \left\lbrace 0\right\rbrace$  
such that for $x \in \R^d$,
\begin{align*}
\exists C >0 ,\; \| V(x)-V_\infty \| &\leq C \left\langle x \right\rangle^{-p},\\
\forall \alpha \in \N^d, \; |\alpha|\geq 1, \; \exists C_\alpha >0, \;
 \| \d_x^\alpha V(x) \| &\leq C_\alpha \left\langle x \right\rangle^{-p- |\alpha|},
\end{align*}
where $\left\langle x \right\rangle = (1 + |x|^2)^{1/2}$ and the norm $\|.\|$ denotes the operator norm on $\C^{2,2}$.
\\
The eigenvalues of $V$ are given by :
\begin{equation}
 \label{eigenval}
\lambda_\pm (x) = \rho_0 (x) \pm \sqrt{\rho^2(x) + \om^2(x)}.
\end{equation}
We assume :
\\
$(ii) \; \exists \; \delta_0 >0, \; \; \om^2(x) + \rho^2(x) > \delta_0, \; \; \forall x \in \R^d.$ 
\\
This point guarantees that there exist smooth and normalized eigenvectors, $\chi_\pm(x)$ associated to $\lambda_\pm(x)$.
\\
$(iii) \; \exists K \subset \R^d$, $K$ a compact subset of $\R^d$ such that
$$ V(x) = \begin{pmatrix}
           \lambda_+(x) & 0 \\
            0 & \lambda_-(x)
          \end{pmatrix},
$$
for all $x \notin K$.
\end{assumption}
\begin{example}
\label{ex}
Potential satisfying Assumption \ref{assumption} can easily be found. 
For example, if we consider $\theta \in C_0^\infty(\R^d)$ and the potential
$$ V(x) = \left\langle x\right\rangle^{-p} \begin{pmatrix}
            \cos \; \theta(x) & \sin \; \theta(x) \\
            \sin \; \theta(x) & -\cos \; \theta(x)
          \end{pmatrix},$$
with $p>0$. Then $V$ satisfies all points of Assumption \ref{assumption}.
\end{example}
\begin{remark}
It is important to notice that $(i)$ and $(ii)$ of Assumption \ref{assumption} ensure that $\rho_0$, $\rho$ and $\om$ are bounded
with bounded derivatives and that the eigenvalues $\lambda_+(x)$ and $ \lambda_-(x)$ do not cross each other, 
which allows us to guarantee global smooth eigenvalues and eigenprojectors, satisfying
\begin{equation}
 \label{eigen}
\forall \alpha, \beta \in \N^d, \exists C >0, \forall x \in \R^d, |\d_x^\alpha \lambda_\pm (x)| + |\d_x^\beta \chi_\pm (x)| \leq C.
\end{equation} 
It is essential to obtain the main result of the paper : 
for example, for $d=2$, in the linear case, $\Lambda = 0$, if we consider the potential:
$$V(x) = \left( \begin{array}{cc}
  x_1 & x_2 \\
  x_2 & - x_1
 \end{array}\right), 
 $$
the eigenvalues $\lambda_\pm (x) = \pm |x|$ cross each other when $x = 0$. The authors of \cite{Hag94} and \cite{HJ99}
have proved that the approximation is not valid in this case and that there are exchanges of energies between different modes (see also 
\cite{modes} and \cite{Wigner}).
\end{remark}
\begin{remark}
Points $(i)$ and $(iii)$ are technical assumptions : under point $(i)$ of Assumption \ref{assumption}, for fixed $\eps >0$, 
we can prove global existence of the solution $\psi^\eps$. 
Actually, global existence can be proved under weaker conditions on the potential, this result is discussed in the appendix.
\\
\noindent
Moreover, they are useful to obtain Strichartz estimates, which will be crucial tools 
in the analysis. 
Thanks to point $(iii)$, we have constant eigenprojectors, outside a compact subset of $\R^d$, which is needed in \cite{FerRou} to obtain 
Strichartz estimates without any localization (for a deeper discussion about them, we refer to Section~\ref{estimates}).
\end{remark}
\noindent
We introduce the following notation :
\begin{notation}
 For two positive numbers $a^\eps$ and $b^\eps$, the notation $a^\eps \lesssim b^\eps$ means that there exists a constant $C>0$, 
independent of $\eps$, such that for all $\eps \in ]0,1]$, $a^\eps \leq C b^\eps$.
\end{notation}
\subsection{Classical trajectories}
\noindent
We consider the classical trajectories $(x^\pm(t), \xi^\pm(t))$ solutions to
\begin{equation}\label{traj}
 \dot x^\pm(t)=\xi^\pm(t),\;\;\dot \xi^\pm(t)=-\nabla
 \lambda_\pm(x^\pm(t)),\;\;x^\pm(0)=x^\pm_0,\;\xi^\pm(0)=\xi_0^\pm. 
 \end{equation}
We have the following result :
\begin{lemma}
Let $\left(x_0^\pm, \xi_0^\pm \right) \in \R^d \times \R^d$. 
\\
\noindent
Under point $(i)$ of Assumption \ref{assumption},
for each $+$ and $-$ trajectory, \eqref{traj} has a unique global, smooth solution $(x^\pm, \xi^\pm) \in C^\infty (\R, \R^d)^2$.
\\
Moreover, the following estimate is satisfied :
\begin{equation}
 \label{trajestimate}
\exists C_0, C_1 >0,  \; |x^\pm(t)| \leq C_0 t, \; \; |\xi^\pm(t)| \leq C_1, \quad \forall t \in \R.
\end{equation}
\end{lemma}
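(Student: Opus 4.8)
The plan is to treat the system~\eqref{traj} as an autonomous ODE on $\R^d \times \R^d$ with right-hand side $F(x,\xi) = (\xi, -\nabla\lambda_\pm(x))$, and to exploit the boundedness of $\nabla\lambda_\pm$ guaranteed by the remark following Assumption~\ref{assumption} (which, via point $(i)$, yields~\eqref{eigen}). First I would record that $\lambda_\pm \in C^\infty(\R^d)$ and that $\nabla\lambda_\pm$ is globally bounded with all derivatives bounded; hence $F$ is smooth and globally Lipschitz on $\R^d\times\R^d$. By the Cauchy--Lipschitz (Picard--Lindel\"of) theorem there is a unique maximal solution, and since a globally Lipschitz vector field has at most linear growth, the standard a priori bound excludes finite-time blow-up, so the maximal solution is global, i.e. defined on all of $\R$. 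Smoothness in $t$ then follows by bootstrapping: $\dot\xi^\pm = -\nabla\lambda_\pm(x^\pm)$ is $C^\infty$ because $x^\pm$ is $C^1$ and $\nabla\lambda_\pm$ is $C^\infty$, and one iterates to get $(x^\pm,\xi^\pm)\in C^\infty(\R,\R^d)^2$.

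For the quantitative estimate~\eqref{trajestimate}, the key observation is that $\|\nabla\lambda_\pm\|_{L^\infty} =: M < \infty$. Integrating $\dot\xi^\pm(t) = -\nabla\lambda_\pm(x^\pm(t))$ gives $|\xi^\pm(t)| \le |\xi_0^\pm| + M|t|$, which is only linear, not the claimed bound; so I would instead argue more carefully. Writing $|\xi^\pm(t)| \le |\xi_0^\pm| + \int_0^{|t|} |\nabla\lambda_\pm(x^\pm(s))|\,ds$ is not enough by itself. The honest route is to note that since $V(x)$ is diagonal with entries $\lambda_\pm$ outside the compact set $K$ (point $(iii)$), and $\lambda_\pm$ is bounded with bounded derivatives, actually one uses conservation of the classical energy $H_\pm(x,\xi) = \tfrac12|\xi|^2 + \lambda_\pm(x)$: along the flow $\tfrac{d}{dt}H_\pm(x^\pm(t),\xi^\pm(t)) = \xi^\pm\cdot\dot\xi^\pm + \nabla\lambda_\pm\cdot\dot x^\pm = 0$. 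Since $\lambda_\pm$ is bounded, say $|\lambda_\pm|\le L$, we get $\tfrac12|\xi^\pm(t)|^2 = H_\pm(x_0^\pm,\xi_0^\pm) - \lambda_\pm(x^\pm(t)) \le \tfrac12|\xi_0^\pm|^2 + 2L$, hence $|\xi^\pm(t)| \le \big(|\xi_0^\pm|^2 + 4L\big)^{1/2} =: C_1$, uniformly in $t$. Then from $\dot x^\pm = \xi^\pm$ and $|\xi^\pm|\le C_1$ we obtain $|x^\pm(t)| \le |x_0^\pm| + C_1|t| \le C_0|t|$ for $|t|$ bounded below away from $0$ and, absorbing the constant, for all $t$ by adjusting $C_0$ (or one simply writes $|x^\pm(t)| \le |x_0^\pm| + C_1|t|$, which is of the stated form after enlarging the constant on any fixed region; since the statement asserts $|x^\pm(t)|\le C_0 t$ it is understood for $t$ large, the bounded-$t$ range being trivial by continuity).

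The only genuinely delicate point is the very first one: global existence hinges on $\nabla\lambda_\pm$ being globally Lipschitz (equivalently, $\lambda_\pm \in C^2$ with bounded second derivatives), and this is exactly what the non-crossing condition $(ii)$ buys — without the spectral gap $\om^2+\rho^2 > \delta_0$ the square root in~\eqref{eigenval} would not be smooth and $\nabla\lambda_\pm$ could blow up near a crossing, destroying both the Lipschitz bound and, with it, uniqueness and global existence. So I would make sure to cite~\eqref{eigen} explicitly at the step where global Lipschitz-ness is invoked. Everything else — Picard--Lindel\"of, the bootstrap for $C^\infty$ regularity, and energy conservation for the bound~\eqref{trajestimate} — is standard ODE theory, and I expect the write-up to be short.
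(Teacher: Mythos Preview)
Your argument is correct and is exactly what the paper has in mind: the paper does not actually write a proof, stating only that it ``is based on easy differential inequality arguments and is left to the reader,'' and immediately afterwards records the conservation of the classical energies $E^\pm(t)=\tfrac12|\xi^\pm(t)|^2+\lambda_\pm(x^\pm(t))$, which is precisely the identity you use to get the uniform bound on $|\xi^\pm(t)|$ (and then the linear growth of $|x^\pm(t)|$ by integration). Your observation that point $(ii)$ is implicitly needed to make $\lambda_\pm$ smooth (so that Cauchy--Lipschitz applies and the $C^\infty$ bootstrap goes through) is accurate and worth keeping; the lemma's hypothesis ``point $(i)$'' should really be read as ``points $(i)$--$(ii)$,'' consistent with the remark that yields~\eqref{eigen}.
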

\noindent
The proof of this lemma is based on easy differential inequality arguments and is left to the reader.
\\
\noindent
We denote by~$S^\pm$
the action associated with $(x^\pm(t),\xi^\pm(t))$
\begin{equation}\label{action}
S^\pm(t)=\int_0^t \left( \frac{1}{2} |\xi^\pm(s)|^2-\lambda_\pm(x^\pm(s))\right)\,ds.
\end{equation}
The corresponding energies $E^\pm(t)$ are given by :
$$E^\pm (t)= \dfrac{|\xi^\pm(t)|^2}{2} + \lambda_\pm (x^\pm(t)).$$
These energies are constant along the trajectories :
$$E^\pm (t)=E^\pm (0)= \dfrac{|\xi^\pm_0|^2}{2} + \lambda_\pm (x^\pm_0), \quad \forall t \in \R.$$

\subsection{The ansatz}
\label{ansatz}
We consider the classical trajectories and the action associated with $\lambda_+(x)$ and denote by 
$$Q^+(t) = {\rm Hess} \, \, \lambda_+ (x^+(t)). $$
We consider the function  $u=u(t,y)$  solution to  
 \begin{equation}
\label{profil}
 i\partial_t  u +\frac{1}{2} \Delta u= \frac{1}{2} \left\langle Q^+(t)y;y \right\rangle 
u+\Lambda |u|^{2}u\quad ;\quad 
u(0,y)=a(y),
\end{equation}
and we denote by 
$\vf^\eps$  the function associated with~$u,
x^+,\xi^+, S^+$ by:
\begin{equation}\label{phi}
\varphi^\eps(t,x)=\eps^{-d/4} u
\left(t,\frac{x-x^+(t)}{\sqrt\eps}\right)e^{i\left(S^+(t)+\xi^+(t)
    (x-x^+(t))\right)/\eps}.
\end{equation}
Global existence, conservation of the $L^2-$norm of $u$, and control of its derivatives are 
proved in \cite{Ca-p}. By Corollary 1.11 of \cite{Ca-p}, we actually have
\begin{proposition}
 \label{A}
Let $T>0$, and $a \in \Sch(\R^d)$. Then, for all $k \in \N$, there exists $C=C(T,k)$ such that
$$\forall \alpha, \beta \in \N^d, \; |\alpha|+|\beta|\leq k, \; \|x^\alpha \d_x^\beta u(t) \|_{L^2} \leq C, \quad \forall t \in [0,T].$$
\end{proposition}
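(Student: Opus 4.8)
The plan is to reduce the claimed bounds to a persistence-of-regularity-and-decay statement for the nonlinear flow \eqref{profil}, and to obtain the latter from the standard theory of defocusing $H^1$-subcritical NLS after conjugating away the quadratic potential by the natural Heisenberg observables. Write $\mathcal L_0 := i\d_t + \tfrac12\Delta - \tfrac12\langle Q^+(t)y;y\rangle$ for the linear part of \eqref{profil}. Since $Q^+(t) = \mathrm{Hess}\,\lambda_+(x^+(t))$ and $\dot Q^+(t) = \langle D^3\lambda_+(x^+(t)),\xi^+(t)\rangle$ are bounded on $\R$ — exactly \eqref{eigen} together with \eqref{trajestimate} — the linearized classical flow of the quadratic Hamiltonian $\tfrac12|\eta|^2+\tfrac12\langle Q^+(t)y;y\rangle$, i.e. the matrix solution of $\dot y=\eta$, $\dot\eta=-Q^+(t)y$, is a smooth symplectic path equal to the identity at $t=0$. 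From its block components one builds, as in the free or harmonic case, first-order operators $\mathcal A_j(t),\mathcal B_j(t)$ ($1\le j\le d$), each of the form $\sum_k a_{jk}(t)x_k+\sum_k b_{jk}(t)\d_{x_k}$ with coefficient matrices bounded together with their inverses on $[0,T]$, such that $[\mathcal L_0,\mathcal A_j(t)]=[\mathcal L_0,\mathcal B_j(t)]=0$, $\mathcal A_j(0)=x_j$, $\mathcal B_j(0)=\d_{x_j}$. Because these $2d$ operators depend linearly and invertibly (on $[0,T]$) on $(x,\d_x)$, for every $k$ and all $|\alpha|+|\beta|\le k$ one has, for $t\in[0,T]$, $\|x^\alpha\d_x^\beta f\|_{L^2}\le C(T,k)\sum_{|\gamma|\le k}\|\Gamma^\gamma(t)f\|_{L^2}$ and conversely, where $\Gamma^\gamma(t)$ ranges over ordered products of at most $k$ of the $\mathcal A_j(t),\mathcal B_j(t)$. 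Thus the proposition is equivalent to the uniform boundedness of $\|\Gamma^\gamma(t)u(t)\|_{L^2}$ on $[0,T]$ for $|\gamma|\le k$.

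Since $Q^+$ is bounded, a classical parametrix construction (Fujiwara) provides a propagator $U(t,s)$ for $\mathcal L_0$, bounded on $L^2$ and satisfying the full set of (dual) Strichartz estimates on $[0,T]$ with constants depending on $T$. For $d\in\{2,3\}$ and $\Lambda\ge 0$ the cubic nonlinearity is defocusing and $H^1$-subcritical, so the usual contraction argument on $u(t)=U(t,0)a-i\Lambda\int_0^t U(t,s)(|u|^2u)(s)\,ds$, combined with the conservation of mass $\|u(t)\|_{L^2}=\|a\|_{L^2}$ and the almost-conservation of the natural energy — whose time derivative is $\tfrac12\int\langle\dot Q^+(t)y;y\rangle|u|^2$, controlled by $\|\,|y|u\|_{L^2}^2$ since $\dot Q^+$ is bounded — yields, via a Gronwall argument, global existence and a uniform bound of $u$ in the energy space $\{f: f,\nabla f,|y|f\in L^2\}$ on $[0,T]$; this is the case $k=1$. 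For the weighted component one uses that $\mathcal A_j(t)u$ solves $\mathcal L_0(\mathcal A_j(t)u)=\Lambda\,\mathcal A_j(t)(|u|^2u)$, that $\mathcal A_j(t)(|u|^2u)$ is, by the Leibniz rule, a bounded combination of $|u|^2\mathcal A_j(t)u$, $u^2\overline{\mathcal A_j(t)u}$, $|u|^2\nabla u$, $u^2\overline{\nabla u}$, and that $\|u(t)\|_{L^\infty}$ lies in $L^{q}_t([0,T])$ for some finite $q\ge 2$ through a Strichartz bound in a space $W^{1,r}\hookrightarrow L^\infty$ with $d<r<\tfrac{2d}{d-2}$ (admissible since $d\le 3$).

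One then bootstraps on $k$, always keeping all ``high-order'' operators on a single factor. Commuting $\Gamma^\gamma(t)$ with $\mathcal L_0$ and using the Duhamel formula, $\Gamma^\gamma(t)u(t)=U(t,0)\bigl(\Gamma^\gamma(0)a\bigr)-i\Lambda\int_0^t U(t,s)\,\Gamma^\gamma(s)\bigl(|u|^2u\bigr)(s)\,ds$. Here $\Gamma^\gamma(0)a$ is a finite linear combination of terms $x^{\alpha'}\d_x^{\beta'}a\in L^2$ because $a\in\Sch(\R^d)$. The Leibniz rule — with the complex-conjugated operators appearing on $\bar u$, their coefficients being of the same size — expands $\Gamma^\gamma(s)(|u|^2u)$ into a bounded combination of products of three factors, each of the form $\Gamma^{\gamma'}(s)u$ or its conjugate. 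One estimates in dual Strichartz norms by placing the single top-order factor in $L^q_tW^{k,r}_x$ and the two remaining factors in $L^\infty_tW^{1,r}_x\hookrightarrow L^\infty_tL^\infty_x$ (again $d<r<\tfrac{2d}{d-2}$), which are controlled by the previous step; a Kato–Ponce estimate and Hölder in time on short subintervals (the time exponent on the right side exceeds that on the left) give a smallness factor closing the estimate on each subinterval. Iterating finitely many times over $[0,T]$ gives $\sup_{[0,T]}\sum_{|\gamma|\le k}\|\Gamma^\gamma(t)u(t)\|_{L^2}\le C(T,k)$, which by the norm equivalence of the first paragraph is precisely the assertion.

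The main obstacle is the energy-space step: one must establish the uniform bound on $[0,T]$ in spite of the time-dependence of the quadratic potential — so the Hamiltonian is only almost conserved — and of the fact that in dimension $d=2,3$ the $H^1$ norm does not control $\|u\|_{L^\infty}$, which forces the global argument through Strichartz spaces rather than pure energy identities and requires tracking the growth of $\|\,|y|u\|_{L^2}$ by a Gronwall inequality whose integrable weight is itself a Strichartz norm of $u$. Once this is in hand, the construction and algebra of the operators $\mathcal A_j,\mathcal B_j$ and the regularity/decay bootstrap are routine, if lengthy; this is the content of Corollary 1.11 of \cite{Ca-p}.
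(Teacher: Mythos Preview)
The paper does not give its own proof of this proposition: it is stated as a direct consequence of Corollary~1.11 of \cite{Ca-p}, which you yourself acknowledge in your last sentence. So there is no ``paper's proof'' to compare to beyond the citation, and your sketch is precisely an outline of the argument behind that cited result.

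Your outline is essentially correct. The Heisenberg-picture observables $\mathcal A_j(t),\mathcal B_j(t)$ built from the symplectic flow of the time-dependent quadratic Hamiltonian do commute with $\mathcal L_0$ (in the sense that $\mathcal L_0(\mathcal A_j u)=\mathcal A_j(\mathcal L_0 u)$), and on a bounded interval they yield norms equivalent to the $x^\alpha\d_x^\beta$ ones. The $k=1$ step via the almost-conserved energy and a Gronwall on $\|\,|y|u\|_{L^2}$ is right, as is the inductive bootstrap through Strichartz. One place where your write-up is a bit loose is the trilinear estimate in the bootstrap: placing the two low-order factors in $L^\infty_x$ via $W^{1,r}\hookrightarrow L^\infty$ requires $L^r$-control of their \emph{gradients}, i.e.\ a Strichartz bound at one order higher than the factor itself, so the induction must be phrased in Strichartz norms and not only in $L^\infty_tL^2_x$. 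This is easily fixed (and is how \cite{Ca-p} proceeds), but as written the step ``controlled by the previous step'' needs this clarification.

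For comparison, the paper's own proof of the closely related Proposition~\ref{growth} in Section~\ref{sec:growth} does \emph{not} introduce the Heisenberg observables: it commutes $x^\alpha$ and $\d_x^\alpha$ directly with the equation, keeps the commutator terms $[\d_x^\alpha,\langle Q^+(t)y;y\rangle]$ and $[\Delta,x^\alpha]$ as lower-order source terms, and organizes the product estimates differently (all factors in $L^4_tL^4_x$ when $d=2$; one factor in $L^2_tL^6_x$ and $H^1\hookrightarrow L^6$ when $d=3$). Your conjugation approach trades these explicit commutators for the algebra of setting up $\mathcal A_j,\mathcal B_j$; both routes are standard and lead to the same conclusion.
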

\noindent
We will use the following notations:
\begin{notation}
For $p \in \N$, we define the functional spaces $H_\eps^p$ by
\begin{equation*}
 H_\eps^p = \left\lbrace f \in L^2(\R^d), \quad 
 \sum_{|\alpha|\leq p} \| \eps^{|\alpha|} \d_x^\alpha f \|_{L^2}^2< +\infty \right\rbrace
\end{equation*}
For all $f \in H_\eps^p$, we write the associated norm : 
$$\|f\|_{H_\eps^p} = \left(\sum_{|\alpha|\leq p} \| \eps^{|\alpha|} \d_x^\alpha f \|_{L^2}^2\right)^{1/2} $$
\end{notation}
\noindent
We now state the main result of the paper. Of course, if we consider initial data polarized along the other eigenvector,
a similar result is available, with a corresponding ansatz.
\begin{theorem}
\label{main}
Let $T >0$ and $a \in \Sch(\R^d)$. Under assumption \ref{assumption}, consider $\psi^\eps$, 
the exact solution to the Cauchy problem \eqref{NLS0} - \eqref{data}, and $\vf^\eps$, the approximation given by \eqref{phi}.
If we denote by $w^\eps$ the difference $$w^\eps(t,x) = \psi^\eps (t,x) - \vf^\eps(t,x)\chi_+(x),$$
then $w^\eps$ satisfies
\begin{equation*}
\sup_{t \in [0,T]} \| w^\eps (t) \|_{H_\eps^1} \Tend\eps 0 0.
\end{equation*}
\end{theorem}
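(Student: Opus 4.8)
The plan is to derive an equation for $w^\eps$, split it into a linear part governed by $P(\eps)$ and a nonlinear/coupling source term, and then estimate $w^\eps$ in $H^1_\eps$ using a Gronwall/bootstrap argument fed by the Strichartz estimates alluded to in Section~\ref{estimates}. First I would write $\psi^\eps = \vf^\eps\chi_+ + w^\eps$ and plug into \eqref{NLS0}. Since $\vf^\eps$ is built from the scalar profile equation \eqref{profil}, the usual WKB computation (as in the scalar case \cite{CF-p}) shows that $\vf^\eps\chi_+$ solves \eqref{NLS0} up to an error: the terms that survive are (a) the off-diagonal action of $V$ on $\chi_+$, which vanishes because $\chi_+$ is a genuine eigenvector, $V\chi_+=\lambda_+\chi_+$, so only the \emph{scalar} Schr\"odinger operator with potential $\lambda_+$ remains; (b) commutator terms $-\frac{\eps^2}{2}\Delta(\vf^\eps\chi_+) + \vf^\eps(-\frac{\eps^2}{2}\Delta)\chi_+$-type expressions coming from the fact that $\chi_+(x)$ is $x$-dependent — these produce factors $\eps^2\nabla\chi_+\cdot\nabla\vf^\eps$ and $\eps^2\vf^\eps\Delta\chi_+$, which after the wave-packet rescaling $y=(x-x^+(t))/\sqrt\eps$ carry a net positive power of $\eps$ (using \eqref{eigen} to bound the derivatives of $\chi_+$ uniformly, and Proposition~\ref{A} to bound the rescaled derivatives of $u$); (c) the Taylor-remainder term coming from replacing $\lambda_+(x)$ by its quadratic approximation $\lambda_+(x^+(t)) + \nabla\lambda_+\cdot(x-x^+) + \frac12\langle Q^+(t)(x-x^+),(x-x^+)\rangle$, which is $O(\eps^{1/2})$ after rescaling since the cubic remainder gives $|x-x^+|^3 = \eps^{3/2}|y|^3$ against $\eps$ in the denominator of the phase; (d) the nonlinear discrepancy $\Lambda\eps^{\beta_c}\big(|\psi^\eps|^2\psi^\eps - |\vf^\eps|^2\vf^\eps\chi_+\big)$. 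Collecting, $w^\eps$ satisfies
\begin{equation*}
i\eps\d_t w^\eps - P(\eps)w^\eps = \Lambda\eps^{\beta_c}\big(|\psi^\eps|^2_{\C^2}\psi^\eps - |\vf^\eps|^2\vf^\eps\chi_+\big) + \eps^{\theta} r^\eps(t,x),
\end{equation*}
with $w^\eps_{|t=0}=0$ (the data \eqref{data} is exactly $\vf^\eps(0)\chi_+$), for some $\theta>0$ and $r^\eps$ bounded in $H^1_\eps$ uniformly in $t\in[0,T]$ and $\eps$.

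Next I would set up the Duhamel formula for $w^\eps$ against the semigroup $e^{-itP(\eps)/\eps}$, divided by $i\eps$, and estimate in the $H^1_\eps$ norm (equivalently, apply $\eps\nabla$ and $\mathrm{Id}$ and use that $\eps\nabla$ almost commutes with $P(\eps)$, the commutator being $\nabla V$, controlled by Assumption~\ref{assumption}$(i)$). The linear propagator is unitary on $L^2$, and — this is where Strichartz enters — it obeys the semiclassical Strichartz estimates of \cite{FerRou} on $[0,T]$, valid globally in space precisely because of Assumption~\ref{assumption}$(iii)$ (constant eigenprojectors outside a compact set) together with the long-range condition $(i)$. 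One writes $w^\eps$ in the Strichartz space $L^q([0,t];W^{1,r}_\eps)$ for an admissible pair $(q,r)$ adapted to $H^1$-subcritical cubic NLS in dimension $d\in\{2,3\}$, estimates the source $\eps^\theta r^\eps$ trivially, and estimates the nonlinear term by writing $|\psi^\eps|^2\psi^\eps - |\vf^\eps|^2\vf^\eps\chi_+$ as a sum of terms each containing at least one factor of $w^\eps$, the other factors being either $\vf^\eps\chi_+$ (whose Strichartz norms are $O(1)$ by the same estimates applied to the explicit ansatz, or directly from Proposition~\ref{A}) or $\psi^\eps$ itself. The cubic structure gives, schematically,
\begin{equation*}
\|w^\eps\|_{L^q_T W^{1,r}_\eps} \lesssim \eps^{\theta-1}T + \eps^{\beta_c-1}\,\big(\|\vf^\eps\chi_+\|^2 + \|\psi^\eps\|^2\big)\,\|w^\eps\|_{L^q_T W^{1,r}_\eps},
\end{equation*}
and since $\beta_c - 1 = d/2 \ge 1 >0$ the nonlinear coefficient carries a small power of $\eps$; absorbing it for $\eps$ small and iterating on short time subintervals whose number depends only on $T$ yields $\sup_{[0,T]}\|w^\eps\|_{H^1_\eps}\lesssim \eps^{\min(\theta,\beta_c-1)}\to 0$.

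The main obstacle is controlling the nonlinear term \emph{uniformly in $\eps$} at the $H^1_\eps$ level: a naive $L^2$-energy estimate loses because the cubic nonlinearity is $L^2$-supercritical for $d=2,3$, so one genuinely needs the Strichartz machinery of \cite{FerRou}, and one must check that $\|\psi^\eps\|$ in the relevant Strichartz norm is itself $O(1)$ — this requires an a priori bound on the exact solution, which is not immediate and should be obtained by a bootstrap: assume $\|w^\eps\|_{L^q_T W^{1,r}_\eps}\le 1$ on a maximal interval, deduce $\|\psi^\eps\| \le \|\vf^\eps\chi_+\| + 1 = O(1)$ there, run the estimate above to get $\|w^\eps\| \lesssim \eps^{\min(\theta,\beta_c-1)}$, and conclude the bootstrap assumption self-improves for $\eps$ small, so the maximal interval is all of $[0,T]$. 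A secondary technical point is the bookkeeping of the various $\eps$-powers in the source term $r^\eps$ — in particular checking that the $x$-dependence of $\chi_+$ (the "Berry phase"–type corrections) and the Taylor remainder of $\lambda_+$ both contribute errors that are $o(1)$ in $H^1_\eps$ after the $\eps^{-d/4}u((x-x^+)/\sqrt\eps)$ rescaling; this is routine given \eqref{eigen} and Proposition~\ref{A} but must be done carefully to land the final power $\theta>0$.
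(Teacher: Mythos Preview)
There is a genuine gap in your source-term analysis. Your item (b), the commutator contribution $\eps^2\,\nabla\chi_+\cdot\nabla\vf^\eps$, does \emph{not} carry a net positive power of $\eps$ after Duhamel. The point is that $\vf^\eps$ carries the rapidly oscillating phase $e^{i\xi^+(t)\cdot(x-x^+(t))/\eps}$, so
\[
\eps\,\nabla\vf^\eps = i\,\xi^+(t)\,\vf^\eps + \mathcal O(\sqrt\eps)\quad\text{in }L^2,
\]
and hence $\eps^2\,\nabla\chi_+\cdot\nabla\vf^\eps = \eps\,\bigl(i\xi^+(t)\cdot\nabla\chi_+\bigr)\vf^\eps + \mathcal O(\eps^{3/2})$. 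In your notation this forces $\theta=1$, so the Duhamel bound $\eps^{\theta-1}T$ is $O(1)$ and does not tend to zero. This is exactly the obstruction identified in Section~\ref{strategy} of the paper: the term $d\chi_+\cdot\eps\nabla\vf^\eps$ in $\widetilde L^\eps$ is $O(1)$ in $L^2$, and one cannot close directly on $w^\eps$.

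The paper's remedy is to introduce a correction $g^\eps$ solving the \emph{scalar} equation for the other mode,
\[
i\eps\partial_t g^\eps + \tfrac{\eps^2}{2}\Delta g^\eps - \lambda_-(x)g^\eps = r(t,x)\,\vf^\eps,\qquad g^\eps(0)=0,
\]
with $r=-i\langle d\chi_+\,\xi^+(t),\chi_-\rangle$, and to work instead with $\theta^\eps = w^\eps + \eps\,g^\eps\chi_-$. The key is that $g^\eps$ stays $O(1)$ in $H^p_\eps$ (Proposition~\ref{L2}): this is not automatic from Duhamel (which would give $O(\eps^{-1}t)$) but follows from an integration-by-parts argument (Lemma~\ref{propagateurs}) exploiting the spectral gap $\lambda_+ - \lambda_->0$ from Assumption~\ref{assumption}(ii). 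Without this adiabatic-type correction your scheme cannot start.

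A secondary issue: your claim that the nonlinear coefficient ``carries a small power of $\eps$'' because $\beta_c-1=d/2>0$ is not right at the Strichartz level. The wave packet obeys $\|\vf^\eps(t)\|_{L^4}\sim\eps^{-d/8}$, and once you track the exponents the factor in front of $\|\theta^\eps\|_{L^pL^q}$ is $O(1)$, not $o(1)$; absorption is achieved by a small-$\tau$ factor coming from H\"older in time (the paper's $\tau^{2/\sigma}$), not by a small $\eps$-power. The bootstrap hypothesis accordingly has to be scaled like the ansatz, $\|\theta^\eps(t)\|_{L^4}\lesssim\eps^{-d/8}$, and is closed via Gagliardo--Nirenberg after controlling $\eps\nabla\theta^\eps$ separately.
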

\begin{remark}
We choose to study a $2 \times 2$ system to simplify notations, but this result can be generalized for a $N \times N$ system, without any 
crossing point. In this case, it is necessary to take time-dependent eigenvectors, to deal with high multiplicities, as it is done in 
\cite{Adiab}, for the case $d=1$ (see \cite{Adiab} and \cite{Hag94} for details about the procedure).
\end{remark}
\begin{remark}
\label{perturbation}
 If we consider initial data which are perturbation of wave packets :
$$\psi^\eps_0(x)= \eps^{-d/4} e^{i\xi_0^+.
    (x-x_0^+)/\eps}a
\left(\frac{x-x_0^+}{\sqrt\eps}\right) \chi_+(x) + \eta^\eps (x), $$
where $\eta^\eps$ satisfies
$$ \|\eta^\eps\|_{L^2(\R^d)} + \|\eps \nabla \eta^\eps\|_{L^2(\R^d)} \leq C \eps^{\gamma_0}, $$
with $\gamma_0 > d/8$, then the approximation of Theorem \ref{main} is still valid (See Remark~\ref{Perturbation2} for details).
\end{remark}
\vspace{3mm}
\noindent
If we assume that for all $k\leq 6 $, we have
\begin{equation}
 \label{B}
\exists C>0, \; \sup_{|\alpha| + |\beta| \leq k} \| x^\alpha \d_x^\beta u(t)\|_{L^2} \leq Ce^{C|t|}, \quad \forall t \in \R,
\end{equation}
it is possible to deal with large time, and to obtain the same result up to a time $T^\eps$ depending on $\eps$:
\begin{theorem}
 \label{loglog}
Let $a \in \Sch(\R^d)$. If for all $k \leq 6$, the estimate \eqref{B} is satisfied, 
then there exists $\eps_0$ such that for all $\eps \in ]0,\eps_0]$,
$$\sup_{t\leq C \log \log \left(\frac{1}{\eps}\right)}\|w^\eps(t)\|_{H_\eps^1} \Tend\eps 0 0.$$
\end{theorem}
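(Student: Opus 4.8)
The plan is to revisit the proof of Theorem \ref{main} and track, in each estimate, the way the constants depend on the time horizon $T$, then replace $T$ by a slowly growing function $T^\eps$ and find the largest $T^\eps$ compatible with smallness of the remainder. First I would recall the structure of the stability argument: $w^\eps = \psi^\eps - \vf^\eps \chi_+$ solves a semilinear Schr\"odinger equation $i\eps \d_t w^\eps - P(\eps) w^\eps = \L^\eps$, where the source $\L^\eps$ is the sum of (a) linear terms measuring the failure of $\vf^\eps \chi_+$ to be an exact solution of the linear flow (off-diagonal coupling, Taylor remainder in the expansion of $\lambda_+$ around $x^+(t)$, derivatives hitting $\chi_+$), and (b) the nonlinear discrepancy $\Lambda \eps^{\beta_c}(|\psi^\eps|^2 \psi^\eps - |\vf^\eps|^2 \vf^\eps \chi_+)$. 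Using the Strichartz estimates recalled in Section~\ref{estimates} together with a Gronwall / bootstrap argument on $\|w^\eps(t)\|_{H_\eps^1}$, one gets a bound of the schematic form
\begin{equation*}
\sup_{t \le T} \|w^\eps(t)\|_{H_\eps^1} \le C(T)\left( \eps^{\kappa} + \text{(nonlinear feedback)}\right) e^{C(T)},
\end{equation*}
with $\kappa>0$ explicit (coming from the $\sqrt\eps$ scale of the envelope and the gap between $\beta_c$ and the threshold; consistent with the $\gamma_0 > d/8$ appearing in Remark~\ref{perturbation}), and where $C(T)$ collects the growth of $u$ and its moments on $[0,T]$.

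The second step is to quantify $C(T)$. The only source of $T$-dependence that is not already polynomial is the growth of the weighted Sobolev norms $\|x^\alpha \d_x^\beta u(t)\|_{L^2}$, which enter the control of the Taylor remainder $\tfrac12(\lambda_+(x) - \lambda_+(x^+(t)) - \nabla\lambda_+(x^+(t))\cdot(x-x^+(t)) - \tfrac12\langle Q^+(t)(x-x^+),(x-x^+)\rangle)$ acting on $\vf^\eps$ — this term carries three extra powers of $(x-x^+(t))/\sqrt\eps$, hence a factor $\eps^{3/2}$ times a third moment of $u$. By hypothesis \eqref{B}, for $|\alpha|+|\beta|\le 6$ these moments grow at most like $e^{C|t|}$. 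Plugging this in, the effective constant in the Gronwall estimate is of the form $e^{C t}$, and the remainder bound becomes, schematically, $\eps^{\kappa} e^{C t} \cdot e^{e^{C t}}$ — a double exponential, because the Gronwall factor itself has an exponentially growing rate. Therefore $\sup_{t \le T^\eps}\|w^\eps(t)\|_{H_\eps^1} \to 0$ as long as $e^{C T^\eps} \le c \log(1/\eps)$, i.e. $T^\eps \le C' \log\log(1/\eps)$, which is exactly the claimed time scale. The constant $6$ in \eqref{B} is presumably dictated by how many derivatives and moments of $u$ are needed to close the $H^1_\eps$ estimate with Strichartz (one needs to control $\vf^\eps$ in $H^1_\eps$ and in the relevant $L^q L^r$ spaces, which costs a few moments, plus the third-order Taylor term costs a third moment, plus room for the nonlinear term $|\vf^\eps|^2$, which via Sobolev embedding in $d\le 3$ requires controlling $u$ in $H^2$ and hence $\d_x^\beta u$ up to $|\beta|$ of order $4$–$6$).

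The third step is the bootstrap itself: fix the target $T^\eps = C\log\log(1/\eps)$ with $C$ small enough, assume a priori that $\sup_{t\le \tau}\|w^\eps(t)\|_{H_\eps^1} \le 1$ (say) on a maximal interval $[0,\tau]$, use this to bound the nonlinear feedback term — here one uses that $|\psi^\eps|^2\psi^\eps - |\vf^\eps|^2\vf^\eps\chi_+$ is, up to controlled terms, $O(\|w^\eps\|)$ times bounded factors (the boundedness coming from Proposition~\ref{A}/hypothesis \eqref{B} applied to $u$ and from the a priori bound on $w^\eps$), with the crucial $\eps^{\beta_c}$ prefactor beating the $\eps^{-d/4}$ losses from the concentration scale — and conclude via Gronwall that in fact $\sup_{t\le\tau}\|w^\eps(t)\|_{H_\eps^1} \le \eps^{\kappa} e^{CT^\eps}e^{e^{CT^\eps}} \ll 1$ for $\eps$ small, so the maximal interval is all of $[0,T^\eps]$ and the bound improves to one tending to $0$. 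I expect the main obstacle to be the careful homogeneity bookkeeping in the nonlinear term: one must verify that every occurrence of the concentration factor $\eps^{-d/4}$ (there are three of them in $|\vf^\eps|^2\vf^\eps$) is compensated, after integrating in $x$ and using H\"older/Strichartz, by the $\eps^{\beta_c}$ in front and by the $\sqrt\eps$-scale Jacobian, leaving a genuinely positive net power $\eps^\kappa$ — and that the substitution $T \rightsquigarrow T^\eps(\eps)$ does not secretly reintroduce $\eps$-dependence into the "constants" $C$ coming from Strichartz (it does not, since those constants depend only on the potential $V$ via Assumption~\ref{assumption}, not on the time interval). A secondary technical point is that \eqref{B} is assumed only for $t\in\R$ with the stated exponential rate, so one must make sure the Strichartz time-slicing is done on unit intervals, summing the contributions, so that the per-slice constant stays $O(1)$ and only the number of slices ($\sim T^\eps$) and the per-slice exponential growth of $u$ accumulate — giving the $e^{CT^\eps}$, and then its exponential in the final Gronwall step.
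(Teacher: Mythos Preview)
Your overall strategy --- track the $T$-dependence in the proof of Theorem~\ref{main}, feed in the exponential bound \eqref{B} on the moments of $u$, obtain a Gronwall rate $e^{Ct}$ and hence a final bound of the form $\eps^{\kappa}e^{e^{Ct}}$, then solve for the largest admissible $T^\eps$ --- is precisely what the paper does, and your identification of the $\log\log$ time scale is correct.

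Two technical points in your description are inaccurate, though, and would cause trouble if taken literally. First, you write the stability equation for $w^\eps=\psi^\eps-\vf^\eps\chi_+$ and list ``derivatives hitting $\chi_+$'' among the small linear sources. But this term, $d\chi_+\cdot\eps\nabla\vf^\eps = i\,d\chi_+\,\xi^+(t)\,\vf^\eps + O(\sqrt\eps)$, is $O(1)$ in $L^2$, not $O(\eps^\kappa)$; this is exactly the obstruction flagged in Section~\ref{strategy}. The proof of Theorem~\ref{main} that you are revisiting does \emph{not} work with $w^\eps$ but with the corrected quantity $\theta^\eps=w^\eps+\eps g^\eps\chi_-$, where $g^\eps$ solves a linear equation in the $\lambda_-$ mode chosen to cancel this $O(1)$ piece. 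The large-time argument must be run on $\theta^\eps$, and one needs the exponential control on $g^\eps$ from Proposition~\ref{L2} (which is already stated under hypothesis \eqref{B}).

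Second, your claim that the Strichartz slicing can be done on unit intervals with per-slice constants $O(1)$ does not survive the growth of the coefficients. After Strichartz and H\"older on $I=[t,t+\tau]$, the nonlinear term contributes $\tau^{2/\sigma}e^{2C't}\,\|\theta^\eps\|_{L^p(I,L^q)}$ (using the bootstrap $\|\theta^\eps(t)\|_{L^4}\lesssim\eps^{-d/8}e^{C't}$ and the matching bound on $\vf^\eps$), and absorbing this into the left side forces $\tau^{2/\sigma}e^{2C't}\le\tfrac12$, so $\tau$ must shrink with the time horizon. The paper first restricts to $t\le A\log(1/\eps)$, chooses $\tau$ accordingly (so $\tau$ depends on $\eps$, though not on $t$), and the genuine $\log\log$ restriction then appears only at the very end, from closing the bootstrap via Gagliardo--Nirenberg against the double exponential, as you correctly describe.
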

\noindent
Besides, for initial data given by Remark \ref{perturbation}, it is possible to prove the same result for large times. 
These points will be discussed after the proof of Theorem \ref{main}.
\smallbreak
\noindent
The behaviour of $u(t)$ for large time is an open question in general. However, there are situation where 
an exponential control of these momenta and derivatives is proved: when $d=1$ or $d\geq 1$ with negative eigenvalues (See Proposition 1.12 of 
\cite{Ca-p}), or if $d\geq 1$, $V(x) = V_\infty$ outside an compact subset $K$ and $x(t) \Tend t \infty \infty$ (see \cite{Ca-p}).
The result must be true in a more general framework. It is possible to prove it under more general conditions on $Q^+$. 
\\
\smallbreak
\noindent
Let us first define Strichartz admissible pairs :
\begin{definition}\label{def:adm}
 A pair $(p,q)$ is {\bf admissible} if $2\leq q
  \leq\frac{2d}{d-2}$ ($2\leq q<
  \infty$ if $d=2$)  
  and 
$$\frac{2}{p}=\delta(q):= d\left( \frac{1}{2}-\frac{1}{q}\right).$$
\end{definition}
The following proposition gives an other situation where the behaviour of the profile is known :
\begin{proposition}
\label{growth}
Let $d=2$ or $3$. Assume $\Lambda \geq 0$ and :
\begin{equation}
\label{assumptionQ}
 \left|\dfrac{d}{dt}Q^+(t)\right| \leq \dfrac{C}{(1+|t|)^{\kappa_0+1}},
\end{equation}
with $\kappa_0 >2$. We consider $u$, the solution to the Cauchy problem \eqref{profil}. 
Then, for all $k \in \N$, the following property is satisfied : 
there exists $C>0$ such that for all admissible pair $(p,q)$, we have
\begin{equation*}
\forall \alpha, \beta \in \N^d, \; |\alpha|+|\beta| \leq k, \quad \|x^\alpha \d_x^\beta u\|_{L^p([0,t],L^q)} \leq C e^{C|t|}, 
\; \forall t \in \R.
\end{equation*}
\end{proposition}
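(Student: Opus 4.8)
\smallbreak
\noindent\emph{Proof strategy.}
The plan is to prove the bounds by induction on the total order $N=|\alpha|+|\beta|$, combining the conservation laws of \eqref{profil} with Strichartz estimates for the linear propagator, in the spirit of Proposition~1.12 of \cite{Ca-p}. Hypothesis \eqref{assumptionQ} (through $\kappa_0>2$) is what tames the time-dependent quadratic part: it provides global-in-time Strichartz estimates for the linear flow and keeps the error produced by the time-dependence of $Q^+$ integrable in the energy identities, while the defocusing sign $\Lambda\ge0$ prevents any finite-time blow-up.

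\noindent
First I would fix the linear ingredients. Let $U(t,s)$ be the propagator of $L(t)=i\d_t+\tfrac12\Delta-\tfrac12\langle Q^+(t)y;y\rangle$. Since \eqref{assumptionQ} forces $Q^+$ to be bounded, $U$ has a generalized Mehler kernel, and under \eqref{assumptionQ} it satisfies, on any interval $I$ and for admissible pairs $(p,q)$ and $(p_1,q_1)$, the homogeneous and inhomogeneous Strichartz estimates with constants $\lesssim e^{C|I|}$ (hence bounded on intervals of length $\le1$; see \cite{Ca-p}). I would also use the two families of first-order operators $\mathcal A_j(t)=U(t,0)\,y_j\,U(0,t)$ and $\mathcal B_j(t)=U(t,0)(-i\d_j)U(0,t)$, $1\le j\le d$: they commute with $L(t)$ exactly, obey a Leibniz-type inequality with respect to the gauge-invariant nonlinearity $z\mapsto|z|^2z$, and — being governed by the classical flow of the Hamiltonian $\tfrac12|\eta|^2+\tfrac12\langle Q^+(t)y;y\rangle$, whose fundamental matrix grows at most exponentially since $Q^+$ is bounded — have coefficients (as do those of the inverse change of variables expressing $y_j,-i\d_j$ through the $\mathcal A_k,\mathcal B_k$) of size $O(e^{C|t|})$. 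It is therefore enough to bound, for each $N$, the Strichartz norms of all products of at most $N$ of the $\mathcal A_j(t),\mathcal B_j(t)$ applied to $u$.

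\noindent
For $N=0$ mass conservation gives $\|u(t)\|_{L^2}=\|a\|_{L^2}$; for $N=1$ I would use the pseudo-energy $\tfrac12\|\nabla u\|_{L^2}^2+\tfrac12\int\langle Q^+(t)y;y\rangle|u|^2\,dy+\tfrac\Lambda2\|u\|_{L^4}^4$, whose time derivative is $\tfrac12\int\langle\dot Q^+(t)y;y\rangle|u|^2\,dy$, together with $\tfrac{d}{dt}\|yu\|_{L^2}^2=2\,\IM\langle yu,\nabla u\rangle_{L^2}$: using $\Lambda\ge0$ and the integrability of $\dot Q^+$, a coupled bootstrap yields $\|\nabla u(t)\|_{L^2}+\|yu(t)\|_{L^2}\lesssim e^{C|t|}$. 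For the inductive step, assuming the claim at all orders $<N$, I would apply an order-$N$ product $\mathcal C$ of the $\mathcal A_j,\mathcal B_j$ to \eqref{profil}: exact commutation leaves no commutator source term, and the Leibniz rule gives $L(t)(\mathcal C u)=\Lambda R_N$ with $R_N$ a finite sum of trilinear terms in $u,\bar u$ and their $\mathcal A,\mathcal B$-derivatives of total order $N$, so each such term has either exactly one factor of order $N$ (the other two being $u,\bar u$) or all three factors of order $<N$. Duhamel's formula and the Strichartz estimates then give, on any $I$ of length $\le1$ and for a suitable admissible pair,
\begin{equation*}
\|\mathcal C u\|_{L^p(I;L^q)}\lesssim\|\mathcal C u(\inf I)\|_{L^2}+\|u\|_{L^{p_0}(I;L^{q_0})}^{2}\,\|\mathcal C u\|_{L^p(I;L^q)}+\big(\text{terms }\lesssim e^{C|t|}\big),
\end{equation*}
the last contribution collecting the all-low-order terms of $R_N$, estimated by H\"older, the Sobolev embeddings available for $d\le3$, and the induction hypothesis.

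\noindent
The main obstacle is to convert these local-in-time estimates into the global exponential bound: over $[0,t]$ the factor $\|u\|_{L^{p_0}L^{q_0}}$ is not small, so the quadratic term cannot be absorbed in one step. I would partition $[0,t]$ into subintervals on which this factor is small enough to absorb, solve the fixed-point inequality on each, and then chain the bounds for $\sup_{s}\sum_{\mathcal C}\|\mathcal C u(s)\|_{L^2}$ across the pieces while controlling how the per-interval losses accumulate; it is exactly here that $\Lambda\ge0$ rules out a blow-up that would destroy the subdivision, and that the integrability encoded in \eqref{assumptionQ} makes the contributions of the time-dependence of $Q^+$ summable, so that the accumulated growth is only $e^{C|t|}$. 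Summing the local Strichartz bounds over the subintervals then gives $\|\mathcal C u\|_{L^p([0,t];L^q)}\lesssim e^{C|t|}$ for every admissible $(p,q)$; undoing the change of variables yields the claim at order $N$, and the case $t<0$ is symmetric.
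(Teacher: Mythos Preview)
Your route via the conjugated operators $\mathcal A_j,\mathcal B_j$ is a reasonable alternative to the paper's direct use of $x^\alpha,\partial_x^\beta$, and the commutation with the linear flow is indeed a pleasant feature. The paper works instead with $x^\alpha u$ and $\partial_x^\alpha u$, reduces mixed monomials to pure ones via a Triebel-type inequality, and for $d=3$ has to invoke the endpoint pair $(2,6)$ to estimate the ``all lower order'' trilinear pieces. So the two arguments are organized differently.

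There is, however, a genuine gap in your base case that breaks the induction. At level $N=1$ you only claim $\|\nabla u(t)\|_{L^2}+\|yu(t)\|_{L^2}\lesssim e^{C|t|}$. The paper (Lemma~\ref{Q+}) extracts the much sharper information $\|\nabla u(t)\|_{L^2}\le C_1$ and $\|u(t)\|_{L^4}\le C$ \emph{uniformly in $t$}, by building the modified functional $A(t)=E(t)+\tfrac{\tilde C}{(1+|t|)^{2+\delta}}\,\tfrac12\|yu(t)\|_{L^2}^2$ with $2<2+\delta<\kappa_0$; this is precisely where $\kappa_0>2$ is used, and it yields $A'(t)\le C(1+|t|)^{-1-\tilde\delta}A(t)$, hence $A$ bounded. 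That uniform $H^1$ bound is what makes the inductive Strichartz step close with intervals of \emph{fixed} length $\tau$: the absorption factor is $\tau^{2/\sigma}\|u\|_{L^\infty(I,H^1)}^2$, so with $\|u\|_{H^1}$ bounded one chooses $\tau$ once and for all, gets $O(t)$ subintervals, and the recursion $A_k(t+\tau)\le C A_k(t)+Ce^{C(t+\tau)}$ gives exponential growth.

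With only your exponential $H^1$ bound, the absorption forces $\tau\lesssim e^{-C't}$ (or, in your variable-length partition, a number of subintervals $\sim\|u\|_{L^{p_0}([0,t],L^{q_0})}^{p_0}$ which you have not shown to be $O(t)$), and chaining a multiplicative constant across exponentially many pieces produces a doubly exponential bound, not $e^{C|t|}$. Your sentence ``the integrability encoded in \eqref{assumptionQ} makes the contributions of the time-dependence of $Q^+$ summable, so that the accumulated growth is only $e^{C|t|}$'' hides exactly this point: integrability of $\dot Q^+$ alone (any $\kappa_0>0$) is not what is used; it is the decay rate $\kappa_0>2$ that allows the weighted-energy trick producing a \emph{bounded} $\|\nabla u(t)\|_{L^2}$. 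Once you strengthen your $N=1$ step to recover this uniform bound, the rest of your scheme can be made to work.
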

\noindent
Note that with $p = \infty$ and $q=2$, we obtain the property \eqref{B}.
\begin{remark}
\label{rmq:growth}
Let $V$ satisfying Assumption \ref{assumption}. We denote by $E_0$ the energy associated with the trajectories and we introduce 
$\lambda_\infty$ as the following limit (which exists, thanks to Assumption \ref{assumption}) :
$\lim\limits_{|x| \rightarrow \infty} \lambda_+(x) = \lambda_\infty.$ 
\\
\noindent
If $E_0$ is such that $E_0 > \lambda_\infty,$ and
\begin{equation}
 \label{trajinfini}
\lim\limits_{t \rightarrow \infty} |x^+(t)| = + \infty,
\end{equation}
then $Q^+(t)$ satisfies \eqref{assumptionQ}.
The proof of this statement will be sketched in Section~\ref{sec:growth}.
\\
\noindent
Note that \eqref{trajinfini} implies that $E_0\geq \lambda_{\infty}$, so
that the assumption $E_0>\lambda_{\infty}$ is not a very strong one if \eqref{trajinfini} is satisfied.
\end{remark}
\begin{example}
If we consider the potential $V$ introduced in Example \ref{ex} and if we build trajectories 
associated with an eigenvalue of $V$, with $x_0 \neq 0$ and $\xi_0$ such that $\dfrac{|\xi_0|^2}{2} >1$, then, 
it is easy to check that $\lim\limits_{t \rightarrow \infty} |x^+(t)| = + \infty,$ 
that the energy is large enough and so, that $Q^+$ satisfies the property \eqref{assumptionQ}.
\end{example}
\smallbreak
\noindent
We momentarily consider the case $d=1$. This case is considered in \cite{Adiab} with weaker assumptions on the potential and a similar 
approximation in large time is proved (at least for $|t|\leq C\log \log (\eps^{-1})$, with a suitable $C>0$). 
\\
\noindent
However, with a matrix-valued potential under Assumption \ref{assumption}, 
one can obtain it up to a better time $t^\eps=C \log (\eps^{-1})$, at same order as the \textit{Ehrenfest time}. 
Note that in the linear case, this kind of approximation is also valid up to Ehrenfest time (See \cite{BR02} for details).
\begin{theorem}
 \label{1D}
Let $d=1$ and $a \in \Sch(\R)$. Then, there exist $\eps_0>0$ and $C>0$ independent of $\eps$ such that for all $\eps \in ]0,\eps_0]$, 
$$\sup_{t\leq C \log \left(\frac{1}{\eps}\right)}\|w^\eps(t)\|_{H_\eps^1} \Tend\eps 0 0.$$
\end{theorem}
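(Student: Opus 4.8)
The plan is to rerun the argument behind Theorem \ref{main}, taking advantage of the fact that for $d=1$ the nonlinearity $|u|^{2}u$ is $L^{2}$-subcritical. This lets us dispense with the Strichartz estimates of \cite{FerRou} altogether --- it is the $\eps$- and $t$-losses carried by those estimates that restrict the higher-dimensional analysis to the scale $\log\log(1/\eps)$ --- and close all estimates directly in $H_{\eps}^{1}$ by an energy method, the only harmonic-analytic input being the one-dimensional inequality $\|f\|_{L^{\infty}}^{2}\lesssim\|f\|_{L^{2}}\|\d_{x}f\|_{L^{2}}\lesssim\eps^{-1}\|f\|_{H_{\eps}^{1}}^{2}$. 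Since $\beta_{c}=1+d/2$, after dividing \eqref{NLS0} by $\eps$ the nonlinear term carries the prefactor $\eps^{\beta_{c}-1}=\eps^{1/2}$; combined with $|\vf^{\eps}|^{2}\lesssim\eps^{-1/2}\|u\|_{L^{\infty}}^{2}$, the embedding above, and a continuity argument run at the level $\|w^{\eps}(t)\|_{H_{\eps}^{1}}\le\eps^{1/4}$, this leaves a strictly positive power of $\eps$ in front of each nonlinear contribution; integrating such quantities over a time window of length $C\log(1/\eps)$ then costs only a fixed negative power of $\eps$, which is what pushes the time scale up to the Ehrenfest one.

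First I would write the equation for $w^{\eps}=\psi^{\eps}-\vf^{\eps}\chi_{+}$. Inserting $\vf^{\eps}\chi_{+}$ into the left-hand side of \eqref{NLS0} and subtracting gives
\begin{equation*}
i\eps\d_{t}w^{\eps}-P(\eps)w^{\eps}=\Lambda\eps^{\beta_{c}}\big(|\psi^{\eps}|_{\C^{2}}^{2}\psi^{\eps}-|\vf^{\eps}|^{2}\vf^{\eps}\chi_{+}\big)+R^{\eps},\qquad w^{\eps}(0)=0,
\end{equation*}
where $R^{\eps}$ gathers the usual consistency errors of the wave-packet ansatz --- the Taylor remainder of $\lambda_{+}$ beyond its quadratic part along $x^{+}(t)$, the terms produced by $-\tfrac{\eps^{2}}{2}\Delta$ acting on the profile at scale $\sqrt\eps$, and the off-diagonal contributions coming from the $x$-dependence of $\chi_{\pm}$ --- exactly those appearing in the proof of Theorem \ref{main}. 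The $\sqrt\eps$ scaling produces one extra half-power of $\eps$ per order, so that $\|R^{\eps}(t)\|_{H_{\eps}^{1}}\lesssim\eps^{3/2}\,\Theta(t)$, where $\Theta(t)$ is a fixed combination of weighted Sobolev norms of $u(t)$. By Proposition~1.12 of \cite{Ca-p}, in $d=1$ one has $\Theta(t)\lesssim e^{C|t|}$, and this exponential control of the profile is all that is needed, since $\Theta$ will only occur multiplied by a positive power of $\eps$.

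Next comes the energy estimate. Differentiating the equation in $x$, using that $P(\eps)$ is self-adjoint on $L^{2}(\R,\C^{2})$ and that $[\eps\d_{x},P(\eps)]=\eps(\d_{x}V)$ is $\eps$ times a bounded multiplication operator, one controls $\tfrac{d}{dt}\|w^{\eps}(t)\|_{H_{\eps}^{1}}^{2}$. Writing $\psi^{\eps}=\vf^{\eps}\chi_{+}+w^{\eps}$ and projecting onto the moving frame $\{\chi_{+},\chi_{-}\}$, the nonlinear difference splits into a resonant part proportional to $|\vf^{\eps}\chi_{+}|^{2}w^{\eps}$, whose contribution to the real part vanishes by gauge invariance; a genuinely anti-resonant part, carrying the oscillation $e^{2i\xi^{+}(t)\cdot x/\eps}$, which one controls by combining that oscillation with the structure of the linearized operator and the conservation laws of \eqref{profil}; and quadratic and cubic terms in $w^{\eps}$, which by $\|w^{\eps}\|_{L^{\infty}}^{2}\lesssim\eps^{-1}\|w^{\eps}\|_{H_{\eps}^{1}}^{2}$, the prefactor $\eps^{1/2}$ and the bootstrap bound $\|w^{\eps}\|_{H_{\eps}^{1}}\le\eps^{1/4}$ are absorbed into the linear part. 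One is led, on the interval where the bootstrap holds, to a differential inequality
\begin{equation*}
\frac{d}{dt}\|w^{\eps}(t)\|_{H_{\eps}^{1}}\lesssim \Phi(t)\,\|w^{\eps}(t)\|_{H_{\eps}^{1}}+\eps^{1/2}\,\Theta(t),
\end{equation*}
the decisive point --- the one where dimension one is used in an essential way --- being that $\Phi(t)$ is controlled, via the conservation laws of \eqref{profil}, uniformly enough that $\int_{0}^{t}\Phi(s)\,ds\lesssim\log(1/\eps)$ on the window $t\le C\log(1/\eps)$ for $C$ small.

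It then remains to close the continuity argument. Set $T^{\eps}=\sup\{t\ge 0:\|w^{\eps}(s)\|_{H_{\eps}^{1}}\le\eps^{1/4}\text{ for all }s\le t\}$, which is positive since $w^{\eps}(0)=0$. On $[0,\min(T^{\eps},C\log(1/\eps))]$, Gronwall's lemma applied to the inequality above, together with $\int_{0}^{t}\Phi\lesssim\log(1/\eps)$ and $\int_{0}^{t}\eps^{1/2}\Theta(s)\,ds\lesssim\eps^{1/2-c_{0}C}$ (using $\Theta(s)\lesssim e^{C|s|}$), gives $\|w^{\eps}(t)\|_{H_{\eps}^{1}}\lesssim\eps^{-c_{1}C}\,\eps^{1/2-c_{0}C}=\eps^{1/2-(c_{0}+c_{1})C}$ for fixed $c_{0},c_{1}>0$; choosing $C$ with $(c_{0}+c_{1})C<1/4$ makes this $\le\tfrac12\eps^{1/4}$ for $\eps$ small, which strictly improves the bootstrap bound, forces $T^{\eps}\ge C\log(1/\eps)$, and at the same time yields $\sup_{t\le C\log(1/\eps)}\|w^{\eps}(t)\|_{H_{\eps}^{1}}\lesssim\eps^{1/2-(c_{0}+c_{1})C}\Tend\eps 0 0$. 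The hard part, and the only place requiring genuine work, is the control of $\Phi(t)$: one must check that the $\eps^{-1}$ loss from the one-dimensional Sobolev embedding is strictly overcompensated, in every nonlinear term containing $w^{\eps}$, by the prefactor $\eps^{1/2}$ and the bootstrap level $\eps^{1/4}$, and that the residual anti-resonant contribution --- which a priori would produce a Gronwall coefficient of size $\|u(t)\|_{L^{\infty}}^{2}$ --- is tamed, using the rapid oscillation of the wave packet and the conserved quantities of \eqref{profil}, so that its time-integral stays $O(\log(1/\eps))$ rather than $O(e^{C\log(1/\eps)})$; everything else is routine once the bookkeeping of $\eps$-powers and the calibration of $C$ and of the bootstrap level are fixed.
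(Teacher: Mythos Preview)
There are two genuine gaps.

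\textbf{Missing correction term.} You work with $w^\eps=\psi^\eps-\vf^\eps\chi_+$ and assert $\|R^\eps(t)\|_{H^1_\eps}\lesssim\eps^{3/2}\Theta(t)$. This is false: among the ``off-diagonal contributions coming from the $x$-dependence of $\chi_\pm$'' is the term $-\eps^2\d_x\vf^\eps\,d\chi_+$, and since $\eps\d_x\vf^\eps=i\xi^+(t)\vf^\eps+O(\sqrt\eps)$ in $L^2$, this term is only $O(\eps)$ in $L^2$ --- hence $O(1)$ after the factor $\eps^{-1}$ coming from the energy identity --- not $O(\eps^{3/2})$. This is exactly why the paper (and \cite{Adiab}, whose one-dimensional argument you are in effect sketching) introduces the auxiliary function $g^\eps$ solving a scalar equation with eigenvalue $\lambda_-$ and works with $\theta^\eps=w^\eps+\eps g^\eps\chi_-$; the bound $\eps^{3/2}$ you quote is the size of $\eps L^\eps$, the source for $\theta^\eps$, not of the source for $w^\eps$. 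Without this correction the argument does not even reach $t=O(1)$.

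\textbf{Gronwall coefficient.} Even after inserting the correction, the energy method you outline is that of \cite{Adiab}, and it gives only $t\lesssim\log\log(1/\eps)$. The leading Gronwall coefficient after your gauge cancellation is $\Phi(t)\sim\eps^{1/2}\|\vf^\eps(t)\|_{L^\infty}^2=\|u(t)\|_{L^\infty}^2\lesssim\|u(t)\|_{L^2}\|\d_x u(t)\|_{L^2}$. While $\|u\|_{L^2}$ is conserved, the energy of \eqref{profil} is not (because $Q^+$ depends on $t$), and under the hypotheses of Theorem~\ref{1D} one only has $\|\d_x u(t)\|_{L^2}\lesssim e^{Ct}$. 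Your claim that ``conservation laws of \eqref{profil}'' yield $\int_0^t\Phi\lesssim\log(1/\eps)$ is therefore unjustified, and the oscillation argument does not help either: integrating by parts against $e^{2i\xi^+ x/\eps}$ gains a factor $\eps$ but throws a derivative onto $w^\eps$, costing $\eps^{-1}$ back. The paper reaches the Ehrenfest scale by a different mechanism: it keeps the Strichartz framework (and the correction $\theta^\eps$), and exploits $L^2$-subcriticality through the \emph{uniform} profile bound $\|u\|_{L^8(I,L^4)}\le K\|u_0\|_{L^2}$ on every interval $I$. This makes the absorption constant $K\tau^{2/\sigma}$ in the bootstrap free of any $e^{Ct}$ factor, so $\tau$ can be fixed independently of $t$; covering $[0,t]$ then costs $O(t)$ steps and one obtains the single exponential $\|\theta^\eps(t)\|_{H^1_\eps}\lesssim\sqrt\eps\,e^{Ct}$, whence the bootstrap persists up to $c\log(1/\eps)$.
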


\subsection{Nonlinear superposition}
In this part, we will study the evolution of solutions associated with initial data corresponding to the superposition of two wave packets. 
There are several cases to analyse, depending on whether we choose wave packets polarized along same or different eigenvectors 
(there is actually a technical difference between these cases). 
\\
\noindent
First, we consider two different modes. Let us introduce 
$$ \psi^\eps_0 (x) = \varphi^\eps_+ (0,x) \chi_+(x) + \varphi^\eps_-(0,x) \chi_-(x),$$
where $\varphi^\eps_+$ and $\varphi^\eps_-$ are respectively associated with the modes $+$ and $-$ and 
have the form \eqref{phi}, and $(x^+_0, \xi^+_0)$, $(x^-_0, \xi^-_0)$ 
are phase space points. 
We associate with the phase space points $(x^\pm_0, \xi^\pm_0)$, the classical trajectories $(x^\pm(t), \xi^\pm(t))$, and
the action $S^\pm(t)$ associated with $\lambda_\pm(x)$ such that $$V(x) \chi_\pm (x)= \lambda_\pm (x) \chi_\pm (x). $$
For finite time, we have:
\begin{theorem}
 \label{superposition1}
Let $a_\pm \in \Sch(\R^d)$ and $\varphi^\eps_\pm(0,x)$ as above. We assume 
$$ \Gamma = \inf_{x \in \R} |E_+ - E_- - \left( \lambda_+(x) - \lambda_-(x)\right)| >0.$$
For all $T>0$ (independent of $\eps$), the function 
$$w^\eps (t,x) = \psi^\eps(t,x) - \varphi^\eps_+(t,x) \chi_+(x) - \varphi^\eps_- (t,x) \chi_-(x)$$
satisfies
$$\sup_{t \in [0,T]} \|w^\eps(t)\|_{H_\eps^1} \Tend\eps 0 0. $$
\end{theorem}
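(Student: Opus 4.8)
The plan is to run the scheme of the proof of Theorem~\ref{main}, viewing $\varphi^\eps_+\chi_+$ and $\varphi^\eps_-\chi_-$ as approximate solutions of \eqref{NLS0} each carrying its own cubic self-interaction, and to show that the only genuinely new object — the nonlinear coupling between the two modes — is negligible thanks to the transversality hypothesis $\Gamma>0$. First I would write the equation solved by $w^\eps$. Since $V(x)$ is self-adjoint with simple eigenvalues (Assumption~\ref{assumption}$(ii)$), the vectors $\chi_\pm(x)$ are orthonormal in $\C^2$, so $|\varphi^\eps_+\chi_++\varphi^\eps_-\chi_-|_{\C^2}^2=|\varphi^\eps_+|^2+|\varphi^\eps_-|^2$ pointwise; in particular $w^\eps(0)=0$. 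Each $\varphi^\eps_\pm$ is built as in \eqref{phi} from a profile $u_\pm$ solving \eqref{profil} (with $Q^\pm$ and $u_\pm(0)=a_\pm$), so, exactly as in Section~\ref{ansatz},
\[
i\eps\d_t(\varphi^\eps_\pm\chi_\pm)-P(\eps)(\varphi^\eps_\pm\chi_\pm)=\Lambda\eps^\beta|\varphi^\eps_\pm|^2\varphi^\eps_\pm\,\chi_\pm+\ell^\eps_\pm,
\]
where $\ell^\eps_\pm$ gathers the third-order Taylor remainder of $\lambda_\pm$ along $x^\pm$ and the terms $-\eps^2\nabla\varphi^\eps_\pm\cdot\nabla\chi_\pm-\tfrac{\eps^2}{2}\varphi^\eps_\pm\Delta\chi_\pm$; by Proposition~\ref{A} and the uniform bounds \eqref{eigen}, each $\ell^\eps_\pm$ is a source term of exactly the kind already controlled in Theorem~\ref{main}. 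Writing $\Phi^\eps=\varphi^\eps_+\chi_++\varphi^\eps_-\chi_-$ and expanding $|\psi^\eps|_{\C^2}^2\psi^\eps$ around $\Phi^\eps$, orthonormality of $\chi_\pm$ gives
\[
|\psi^\eps|_{\C^2}^2\psi^\eps-|\varphi^\eps_+|^2\varphi^\eps_+\chi_+-|\varphi^\eps_-|^2\varphi^\eps_-\chi_-=N^\eps_{\mathrm{cross}}+N^\eps_w,\qquad N^\eps_{\mathrm{cross}}:=|\varphi^\eps_-|^2\varphi^\eps_+\chi_++|\varphi^\eps_+|^2\varphi^\eps_-\chi_-,
\]
with $N^\eps_w$ at least linear in $w^\eps$, its coefficients of size $O(|\Phi^\eps|^2)=O(\eps^{-d/2})$ and $O(|\Phi^\eps|)=O(\eps^{-d/4})$. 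Hence $w^\eps$ solves $i\eps\d_tw^\eps-P(\eps)w^\eps=\Lambda\eps^\beta(N^\eps_{\mathrm{cross}}+N^\eps_w)-\ell^\eps_+-\ell^\eps_-$ with $w^\eps(0)=0$.

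The new ingredient, and where I expect the real work, is the Duhamel contribution of $N^\eps_{\mathrm{cross}}$, which I would make small using $\Gamma>0$. By conservation of the classical energies $E^\pm$ along \eqref{traj}, if $x^+(s_0)=x^-(s_0)=:x_\ast$ for some $s_0\in[0,T]$ then
\[
E_+-E_--\big(\lambda_+(x_\ast)-\lambda_-(x_\ast)\big)=\tfrac12\big(|\xi^+(s_0)|^2-|\xi^-(s_0)|^2\big),
\]
so $\Gamma>0$ forces $|\xi^+(s_0)|\ne|\xi^-(s_0)|$, hence $\dot x^+(s_0)\ne\dot x^-(s_0)$: the two classical trajectories meet transversally. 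More precisely, the same computation shows that $\Gamma>0$ keeps $g(s):=\tfrac12|x^+(s)-x^-(s)|^2$ strictly convex, with $g''\gtrsim 1$, wherever $|x^+(s)-x^-(s)|$ is small; so, fixing $\sigma\in(0,\tfrac12)$ and $B_\eps:=\{s\in[0,T]:\ |x^+(s)-x^-(s)|<\eps^{1/2-\sigma}\}$, a convexity argument gives $|B_\eps|\lesssim\eps^{1/2-\sigma}$. Off $B_\eps$, the two $\sqrt\eps$-rescaled Schwartz profiles occurring in $N^\eps_{\mathrm{cross}}$ are evaluated at points at distance $\ge\eps^{-\sigma}$, so by the rapid decay of $u_\pm$ (Proposition~\ref{A} and Sobolev embedding) one has $\|N^\eps_{\mathrm{cross}}(s)\|_{H_\eps^1}\le C_N\eps^N$ there for every $N$, whereas $\|N^\eps_{\mathrm{cross}}(s)\|_{H_\eps^1}\lesssim\eps^{-d/2}$ on all of $[0,T]$. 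Since $\beta-1=d/2$ and $e^{-itP(\eps)/\eps}$ is bounded on $H_\eps^1$ uniformly for $t\in[0,T]$ (as $[\eps\nabla,P(\eps)]=\eps\nabla V=O(\eps)$), Duhamel's formula yields, for $t\in[0,T]$,
\[
\Big\|\tfrac1\eps\int_0^t e^{-i(t-s)P(\eps)/\eps}\,\Lambda\eps^\beta N^\eps_{\mathrm{cross}}(s)\,ds\Big\|_{H_\eps^1}\lesssim\eps^{d/2}\big(|B_\eps|\,\eps^{-d/2}+\eps^N\big)\lesssim\eps^{1/2-\sigma},
\]
the implicit constant depending on $T$ and $N$, so this contribution to $w^\eps$ tends to $0$ in $L^\infty([0,T],H_\eps^1)$.

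The remaining terms are handled as in the proof of Theorem~\ref{main}. The contribution of $\ell^\eps_++\ell^\eps_-$ is $o(1)$ in $L^\infty([0,T],H_\eps^1)$ via the Strichartz estimates available under Assumption~\ref{assumption}; in $\Lambda\eps^\beta N^\eps_w$, the part linear in $w^\eps$ has an $O(1)$ coefficient after multiplication by $\eps^{\beta-1}=\eps^{d/2}$ and thus contributes a term $\lesssim\int_0^t\|w^\eps(s)\|_{H_\eps^1}\,ds$, while the quadratic and cubic parts are superlinear in $\|w^\eps\|$ and get absorbed once $\|w^\eps\|$ is known small — which holds for $\eps$ small by continuity from $w^\eps(0)=0$. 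A Gronwall/bootstrap argument on the relevant Strichartz norm of $w^\eps$, identical to the one in Theorem~\ref{main}, then closes the estimate and gives $\sup_{t\in[0,T]}\|w^\eps(t)\|_{H_\eps^1}\Tend\eps 0 0$. The main obstacle is the cross term: the estimate above is tight — $\|N^\eps_{\mathrm{cross}}\|_{L^2}\sim\eps^{-d/2}$ is exactly cancelled by $\eps^{\beta-1}=\eps^{d/2}$ — so all the smallness has to come from the measure of the overlap time-set $B_\eps$, i.e. from the quantitative transversality of the two classical trajectories, which is precisely what $\Gamma>0$ encodes via energy conservation.
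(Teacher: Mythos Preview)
Your handling of the cross-interaction $N^\eps_{\mathrm{cross}}$ is essentially the paper's: split $[0,T]$ into the near-collision set and its complement, use Schwartz decay of the profiles where the trajectories are $\gtrsim\eps^{1/2-\sigma}$ apart, and bound the measure of the near-collision set by the strict convexity of $t\mapsto|x^+(t)-x^-(t)|^2$, which $\Gamma>0$ forces through energy conservation. The paper formalizes this in Lemma~\ref{Mk} and the lemma of Section~\ref{different}, and your argument matches.

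The gap is in the treatment of $\ell^\eps_\pm$. You claim its Duhamel contribution is $o(1)$ ``via the Strichartz estimates'', but this is precisely the term that Strichartz alone does \emph{not} handle. Writing $\eps\nabla\varphi^\eps_\pm=i\xi^\pm(t)\varphi^\eps_\pm+\mathcal{O}(\sqrt\eps)$ in $L^2$, the piece $-\eps^2\nabla\varphi^\eps_\pm\cdot\nabla\chi_\pm$ of $\ell^\eps_\pm$ has $L^2$ size $\sim\eps$ with a nonvanishing leading coefficient (the $\chi_\mp$-component of $d\chi_\pm\cdot\xi^\pm$), so after the factor $\eps^{-1}$ in Duhamel it produces an $\mathcal{O}(1)$ contribution to $w^\eps$ on $[0,T]$ --- exactly the obstruction identified in Section~\ref{strategy} for a single wave packet. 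In the proof of Theorem~\ref{main} this is removed by subtracting a correction $\eps g^\eps\chi_-$, with $g^\eps$ solving a scalar linear equation and shown to be bounded in $H^p_\eps$ via the oscillatory cancellation of Lemma~\ref{propagateurs} (which uses the eigenvalue gap, not Strichartz). For the superposition you need the analogous pair of corrections $g^\eps_+\chi_+$ and $g^\eps_-\chi_-$; the paper sets this up at the start of Section~\ref{NLsuperposition}, and only after adding them is the linear source genuinely $\mathcal{O}(\sqrt\eps)$ in $H^1_\eps$ so that your bootstrap goes through. Without the corrections, the $\ell^\eps_\pm$ source is not small and the Gronwall argument does not close.
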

We now choose to superpose two wave packets polarized along the same eigenvector, $\chi_+(x)$:
Let $$\psi_0^\eps(x) = \left(\varphi^\eps_1(0,x) + \varphi^\eps_2 (0,x) \right) \chi_+(x), $$
where $\varphi^\eps_1$ and $\varphi^\eps_2$ have the form \eqref{phi}, and $(x_1^+(0),\xi_1^+(0)), (x_2^+(0),\xi_2^+(0))$ 
are phase space points. We assume $$(x_1^+(0), \xi_1^+(0)) \neq (x_2^+(0), \xi_2^+(0)) .$$
Note that without this assumption, the result is obvious with 
$$\varphi^\eps(0,x) = \varphi^\eps_1(0,x) + \varphi^\eps_2 (0,x).$$
\noindent 
We associate with the phase space points, the classical trajectories 
$$(x_1^+(t), \xi_1^+(t)), \; (x_2^+(t), \xi_2^+(t)),$$ 
and the actions $S_1^+(t), S_2^+(t)$, associated with $\lambda_+(x)$.
\\
\noindent
For finite time, we have
\begin{theorem}
 \label{superposition2}
Let $a_j \in \Sch(\R^d)$ and $\varphi^\eps_j(0,x)$ as above, for $j=1,2$, and $T>0$, independent of~$\eps$. Then, the function 
$$ \psi^\eps(t,x) - \left(\varphi^\eps_1(t,x) + \varphi^\eps_2 (t,x)\right) \chi_+(x) $$
satisfies
$$ \sup_{t \in [0,T]} \| w^\eps (t) \|_{H_\eps^1} \Tend\eps 0 0.$$
\end{theorem}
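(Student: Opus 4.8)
The plan is to follow exactly the scheme of the proof of Theorem~\ref{main}: set $w^\eps = \psi^\eps - v^\eps$ with $v^\eps = (\varphi_1^\eps + \varphi_2^\eps)\chi_+$, derive the equation satisfied by $w^\eps$, write it in Duhamel form, and close a nonlinear estimate on short subintervals of $[0,T]$ using the Strichartz estimates for $P(\eps)$ available under Assumption~\ref{assumption}, together with a Gronwall argument and a continuity/bootstrap argument. The only genuinely new point compared with Theorem~\ref{main} is the contribution of the \emph{cross terms} created when one inserts the sum $\varphi_1^\eps + \varphi_2^\eps$ into the cubic nonlinearity, and it is precisely here that the hypothesis $(x_1^+(0),\xi_1^+(0)) \neq (x_2^+(0),\xi_2^+(0))$ will be used.

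First I would record that, by the consistency computation of Theorem~\ref{main} applied to each wave packet separately,
$$i\eps\d_t v^\eps - P(\eps) v^\eps = \Lambda \eps^{\beta_c}\left(|\varphi_1^\eps|^2\varphi_1^\eps + |\varphi_2^\eps|^2\varphi_2^\eps\right)\chi_+ + R^\eps, \qquad R^\eps = R_1^\eps + R_2^\eps,$$
where each $R_j^\eps$ — coming from the non-polarized corrections $\Delta\chi_+$, $\nabla\chi_+\cdot\nabla u_j$ (controlled via the spectral gap of $V$) and from the Taylor error between the true potential and its quadratic approximation $\tfrac12\langle Q^+(t)y,y\rangle$ — is known to satisfy $\tfrac1\eps\int_0^T\|R_j^\eps(t)\|\,dt \Tend\eps 0 0$ in the relevant dual-Strichartz norm. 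Subtracting from \eqref{NLS0} and using that $|v^\eps|_{\C^2}^2 = |\varphi_1^\eps+\varphi_2^\eps|^2$ since $|\chi_+|_{\C^2}=1$, one finds
$$i\eps\d_t w^\eps - P(\eps) w^\eps = \Lambda\eps^{\beta_c}\left(|\psi^\eps|_{\C^2}^2\psi^\eps - |v^\eps|_{\C^2}^2 v^\eps\right) - \Lambda\eps^{\beta_c} G^\eps\,\chi_+ - R^\eps, \qquad w^\eps(0)=0,$$
where $G^\eps := |\varphi_1^\eps+\varphi_2^\eps|^2(\varphi_1^\eps+\varphi_2^\eps) - |\varphi_1^\eps|^2\varphi_1^\eps - |\varphi_2^\eps|^2\varphi_2^\eps$ is a sum of monomials, each of total degree three and each containing both $\varphi_1^\eps$ and $\varphi_2^\eps$ (and their conjugates) to positive order. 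The term $|\psi^\eps|_{\C^2}^2\psi^\eps - |v^\eps|_{\C^2}^2 v^\eps$ is pointwise $O\!\left((|\psi^\eps|+|v^\eps|)^2|w^\eps|\right)$ and is absorbed by the Strichartz/Gronwall mechanism of Theorem~\ref{main} — the $L^q$ bounds on $v^\eps$ being those of a single wave packet, $\|\varphi_j^\eps(t)\|_{L^q}\lesssim \eps^{d/(2q)-d/4}$ by Proposition~\ref{A} — and, the nonlinearity being only $H^1$-subcritical for $d\in\{2,3\}$, one propagates in parallel the estimates on $\eps\nabla w^\eps$, which have the same structure. Thus the whole matter reduces to showing
$$\frac1\eps\int_0^T \eps^{\beta_c}\|G^\eps(t)\|_{L^2(\R^d)}\,dt \Tend\eps 0 0,$$
and the analogue with $\eps\nabla G^\eps$ in place of $G^\eps$.

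For this I would use the following transversality fact: since $(x_1^+,\xi_1^+)$ and $(x_2^+,\xi_2^+)$ solve the same system \eqref{traj} with different Cauchy data, uniqueness of the flow forces $(x_1^+(t),\xi_1^+(t)) \neq (x_2^+(t),\xi_2^+(t))$ for \emph{every} $t$; hence the smooth curve $\delta(t) := x_1^+(t) - x_2^+(t)$ has only simple zeros on $[0,T]$ — at a zero $t_*$ one has $\dot\delta(t_*) = \xi_1^+(t_*)-\xi_2^+(t_*)\neq 0$ — so it has finitely many zeros $t_1,\dots,t_m$ and $|\delta(t)|\gtrsim|t-t_k|$ near each $t_k$. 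Next, in any monomial of $G^\eps$ the change of variables $y=(x-x_1^+(t))/\sqrt\eps$ turns the $\varphi_1^\eps$-factors into $\eps^{-d/4}$-scaled Schwartz functions of $y$ and the $\varphi_2^\eps$-factors into the same with argument $y+\delta(t)/\sqrt\eps$; since $u_1(t)$, $u_2(t)$ and all their derivatives are bounded in every weighted $L^2$ on $[0,T]$ (Proposition~\ref{A}), this yields, for every $N$,
$$\|G^\eps(t)\|_{L^2(\R^d)} \lesssim_N \eps^{-d/2}\left(1+\frac{|\delta(t)|}{\sqrt\eps}\right)^{-N},$$
and the same bound for $\eps\nabla G^\eps$ (each $\eps\nabla\varphi_j^\eps$ is again a wave packet with the same profile structure). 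Therefore $\eps^{\beta_c}\|G^\eps(t)\|_{L^2}\lesssim_N \eps\,(1+|\delta(t)|/\sqrt\eps)^{-N}$, so after dividing by $\eps$ and integrating, the contribution of a window $|t-t_k|\le\eps^{1/4}$ is $\lesssim\sqrt\eps\int_0^{\eps^{-1/4}}(1+cs)^{-N}\,ds = O(\sqrt\eps)$, while away from all the $t_k$ one has $|\delta(t)|\gtrsim\eps^{1/4}$ and the integrand is $O(\eps^{N/4})$; choosing $N$ large gives $\tfrac1\eps\int_0^T\eps^{\beta_c}\|G^\eps\|_{L^2}\,dt=O(\sqrt\eps)\to 0$. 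Feeding this, together with $R^\eps\to 0$ and the Gronwall control of the $w^\eps$-dependent terms, into the Duhamel formula and iterating over $[0,T]$ would close the proof exactly as for Theorem~\ref{main}.

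The main obstacle is precisely this cross-term bound: the brutal estimate $\||\varphi_1^\eps|^2\varphi_2^\eps\|_{L^2}\le\|\varphi_1^\eps\|_{L^\infty}^2\|\varphi_2^\eps\|_{L^2}=O(\eps^{-d/2})$ produces, after the $\eps^{\beta_c}$ weight and the division by $\eps$ coming from Duhamel, only an $O(1)$ quantity, not $o(1)$. The gain has to come from the fact that the two wave packets are essentially concentrated in balls of radius $\sim\sqrt\eps$ about $x_1^+(t)$ and $x_2^+(t)$, so that they interact only during time windows of length $O(\sqrt\eps)$ around the zeros of $\delta$; and it is the distinctness of the initial phase-space points, through uniqueness for \eqref{traj}, that makes those zeros simple and isolated. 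When $(x_1^+(0),\xi_1^+(0))=(x_2^+(0),\xi_2^+(0))$ this transversality fails — but then the two packets are not genuinely distinct objects and, as noted above, there is nothing to prove.
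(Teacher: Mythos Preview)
Your treatment of the cross terms $G^\eps$ is correct and essentially matches the paper's: the paper introduces the set $I^\eps(T)=\{t:|x_1^+(t)-x_2^+(t)|\le\eps^\gamma\}$ with $0<\gamma<1/2$, proves (Lemma~\ref{Mk}) that $\int_0^T\|N_I^\eps\|_{H^1_\eps}\,dt\lesssim M_{k+2}^3\bigl(T\eps^{k(1/2-\gamma)}+|I^\eps(T)|\bigr)$, and then shows $|I^\eps(T)|\lesssim\eps^\gamma$ via the same simple-zeros argument you give. Your version replaces the hard cutoff at $\eps^\gamma$ by the smooth decay $\lesssim_N(1+|\delta(t)|/\sqrt\eps)^{-N}$, which is a cosmetic difference.

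There is, however, a genuine gap in your treatment of the linear error $R^\eps$. The claim that $\tfrac1\eps\int_0^T\|R_j^\eps(t)\|\,dt\to 0$ is \emph{false} as stated. Among the terms in $R_j^\eps$ is $\eps^2\nabla\varphi_j^\eps\cdot\nabla\chi_+$; since $\eps\nabla\varphi_j^\eps=i\xi_j^+(t)\varphi_j^\eps+O(\sqrt\eps)$ in $L^2$, this term is $\eps\,i\xi_j^+(t)\varphi_j^\eps\cdot d\chi_+ + O(\eps^{3/2})$, and after the $1/\eps$ from Duhamel it contributes an $O(1)$ quantity to $\|w^\eps\|_{L^\infty L^2}$, not $o(1)$. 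This is precisely the obstruction identified in Section~\ref{strategy} and is \emph{not} removed by any choice of dual Strichartz norm.

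The ``spectral gap'' is indeed the cure, but not in the way you invoke it: it does not make $R^\eps$ small. What the paper does---and what you must do here too---is modify the unknown by adding explicit correction terms, setting
\[
\theta^\eps \;=\; w^\eps \;+\; \eps\bigl(g_1^\eps+g_2^\eps\bigr)\chi_-,
\]
where each $g_j^\eps$ solves $\bigl(i\eps\partial_t+\tfrac{\eps^2}{2}\Delta-\lambda_-\bigr)g_j^\eps=r_j\varphi_j^\eps$ with $r_j=-i\langle d\chi_+\xi_j^+(t),\chi_-\rangle$. The gap $\lambda_+-\lambda_-\ge\delta_0>0$ enters through Lemma~\ref{propagateurs}, which shows $\tfrac{1}{i\eps}\int_0^tU_+^\eps(-s)U_-^\eps(s)\,ds$ is bounded on $H^{p+1}_\eps\to H^p_\eps$; this is what yields $\|g_j^\eps(t)\|_{H^p_\eps}=O(1)$ (Proposition~\ref{L2}). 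Only after this correction does the residual source become $O(\sqrt\eps)$ in $L^2$, and then your Strichartz/bootstrap/Gronwall closure, together with your cross-term estimate, goes through.
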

For both cases, infinite time poses a problem that will be discussed in Section \ref{NLsuperposition}. Note that superposition for $d=1$ 
in large time case is proved in \cite{Adiab}, but the arguments are not valid for $d=2$ or $3$ (see Remark \ref{RmqNeps}).

\section{Preliminaries}
\label{preliminaries}
\subsection{About Strichartz estimates}
\label{estimates}
Before beginning the proof, it is crucial to comment the main tool of the proof, the Strichartz estimates.
\\ 
For $d=1$, it is possible to avoid difficulties by using an energy method, 
and the following weighted Gagliardo-Nirenberg inequality to estimate the nonlinearity:
$$\|f\|_{L^\infty} \lesssim \eps^{-1/2} \|f\|_{L^2}^{1/2}\|\eps \d_x f\|_{L^2}^{1/2}, $$
which allows to control the rest (see \cite{Adiab} for the details).
\\
Unfortunatly, this method does not work in our case, with $d=2$ or $3$, since it is $L^2-$supercritical.
In fact, the previous inequality is not valid for $d>1$, there is only the following one:
$$  \|f\|_{L^\infty} \lesssim \eps^{-a} \|f\|_{L^2}^{1-a}\|\eps \nabla f\|_{L^r}^{a},$$
for $r >d$, and $0<a<1$ depending on $r$ and $d$. So it is required to control the $L^r-$norm of $\eps \nabla w^\eps$ for some $r>d$.
An argument using the energy estimate enables us to find a control of the $L^2-$norm of the rest, but this is not sufficient.
Moreover, because of the presence of two modes, it is impossible to choose one, specifically and apply the method of \cite{CF-p}, 
which consists in writing the exact solution as a per\-turbation of the solution of a new equation, involving the Taylor expansion 
of the potential about a point $x^+(t)$ or $x^-(t)$.
\\
For this reason, we need Strichartz estimates. 
In the case of a scalar Schr\"odinger equation, the estimates are available for a scalar external potential, 
with less re\-strictive conditions (the potential can be at most quadratic, see \cite{Fujiwara79} and \cite{Fujiwara} and the discussion 
in \cite{Ca-p}).
\\
In the matrix case, for a potential which is at most quadratic, there is no de\-monstrated Strichartz estimate 
for this kind of matrix-valued Schr\"odinger operator. 
We choose a weaker potential, satisfying point $(i)$ of Assumption \ref{assumption}, and for which Strichartz estimates are available.
\smallbreak
\noindent
We infer the following result from \cite{FerRou}:
\begin{theorem}
 \label{StrichartzEstimates}
Let $(p,q)$, $(p_1, q_1)$, $(p_2, q_2)$ be admissible pairs, such that 
$$q, q_1, q_2 \neq 2d/(d-2), \;\;\; (q, q_1, q_2 < \infty \; \textrm{if} \;d=2).$$ 
Let $I$ be a finite time interval.
Let us introduce $$u^\eps(t)=e^{i\frac{t}{\eps}P(\eps)} u_0 \qquad \textrm{and} \qquad
 v^\eps(t)= \int_{I\cap \left\lbrace s \leq t \right\rbrace}  e^{i \frac{t-\tau}{\eps}P(\eps)}f^\eps(\tau) d\tau.$$
\noindent
$-$ There exists $C=C(q, |I|)$, independent of $\eps$ such that for all $u_0 \in L^2(\R^d)$, we have for all $s \in I$
\begin{equation}
\label{Strichartz1}
\|u^\eps \|_{L^p (I, L^q(\R^d))} \leq C \eps^{-1/p}\|u^\eps(s)\|_{L^2(\R^d)}.
\end{equation}
$-$ There exists $C = C(q_1, q_2, |I|)$, independent of $\eps$ such that for all 
\\
$f^\eps \in L^{p'_2} (I, L^{q'_2}(\R^d))$ we have
\begin{equation}
\label{Strichartz2}
\|v^\eps \|_{L^{p_1} (I, L^{q_1}(\R^d))} \leq C \eps^{-1/p_1 - 1/p_2} \|f^\eps \|_{L^{p'_2} (I, L^{q'_2}(\R^d))}.
\end{equation}
\end{theorem}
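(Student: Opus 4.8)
The plan is to explain how the estimates \eqref{Strichartz1} and \eqref{Strichartz2}, quoted here from \cite{FerRou}, are produced and which parts of Assumption~\ref{assumption} they rest on. First we would record that $P(\eps)$ is self-adjoint on $L^2(\R^d,\C^2)$, so that $e^{i\frac{t}{\eps}P(\eps)}$ is a unitary group and $\|u^\eps(s)\|_{L^2}=\|u_0\|_{L^2}$ for all $s$; this is what lets the right-hand side of \eqref{Strichartz1} be evaluated at an arbitrary $s\in I$. For \eqref{Strichartz1} the route is a short-time dispersive estimate fed into the Keel--Tao $TT^*$ machinery. The input borrowed from \cite{FerRou} is that, under point $(i)$ of Assumption~\ref{assumption} (smooth coefficients, bounded together with all their derivatives), point $(ii)$ (the eigenvalues stay apart, hence globally smooth eigenprojectors with bounded derivatives, see \eqref{eigen}) and point $(iii)$ (the potential being diagonal outside a compact set, so that no spatial cut-off is needed), there is, for small $|t-s|$, a parametrix for $e^{i\frac{t-s}{\eps}P(\eps)}$ giving the dispersive bound $\|e^{i\frac{t-s}{\eps}P(\eps)}\|_{L^1\to L^\infty}\lesssim(\eps|t-s|)^{-d/2}$. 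Interpolating this with the $L^2\to L^2$ bound gives $\|e^{i\frac{t-s}{\eps}P(\eps)}\|_{L^{q'}\to L^q}\lesssim\eps^{-2/p}|t-s|^{-2/p}$ (since $(d/2)(1-2/q)=\delta(q)=2/p$ for an admissible pair); the Hardy--Littlewood--Sobolev step in the $TT^*$ argument carries no power of $\eps$, so a square root of the resulting bilinear estimate leaves $\|e^{i\frac{t}{\eps}P(\eps)}u_0\|_{L^p(J,L^q)}\lesssim\eps^{-1/p}\|u_0\|_{L^2}$ on every subinterval $J$ short enough for the parametrix to be valid; summing the $\ell^p$ contributions over the $O(|I|)$ such subintervals covering $I$, with unitarity used to replace the datum on each piece by $u^\eps(s)$, yields \eqref{Strichartz1} with a constant depending on $|I|$. (Equivalently, the scaling $\tilde u(\sigma,y)=u^\eps(\eps\sigma,\eps y)$ conjugates the propagator to the $\eps$-free matrix Schr\"odinger group of $-\tfrac12\Delta_y+V(\eps y)$ and, through the change of variables, reproduces the same power $\eps^{-1/p}$ because $\tfrac dq-\tfrac d2=-\tfrac2p$.)

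Next we would derive the inhomogeneous bound \eqref{Strichartz2} from \eqref{Strichartz1} by the usual duality and $TT^*$ composition. The dual of \eqref{Strichartz1} for the pair $(p_2,q_2)$ bounds $F\mapsto\int_I e^{-i\frac{\tau}{\eps}P(\eps)}F(\tau)\,d\tau$ from $L^{p'_2}(I,L^{q'_2})$ to $L^2$ by $\eps^{-1/p_2}$; composing with \eqref{Strichartz1} for $(p_1,q_1)$ bounds the non-retarded operator $F\mapsto\int_I e^{i\frac{t-\tau}{\eps}P(\eps)}F(\tau)\,d\tau$ from $L^{p'_2}(I,L^{q'_2})$ to $L^{p_1}(I,L^{q_1})$ by $\eps^{-1/p_1-1/p_2}$, and we would restore the causal cut-off $\{\tau\le t\}$ with no further loss by the Christ--Kiselev lemma. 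The lemma applies because $1/p_1+1/p_2<1$, a strict inequality precisely because both pairs are admissible and $q_1,q_2\neq 2d/(d-2)$ (and $q_1,q_2<\infty$ when $d=2$), which is exactly why those borderline exponents are excluded from the statement.

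The main obstacle, and the reason this result is imported from \cite{FerRou} rather than read off from the scalar at-most-quadratic theory of \cite{Fujiwara79,Fujiwara,Ca-p}, is the matrix structure of $P(\eps)$. The parametrix for $e^{i\frac{t}{\eps}P(\eps)}$ is organised along the two scalar Hamilton flows of $\tfrac12|\xi|^2+\lambda_\pm(x)$ and is block-diagonal in the eigenbasis $\chi_\pm(x)$ only up to remainders governed by $\nabla\chi_\pm$ and by the spectral gap $\lambda_+-\lambda_-$. The delicate step will be to keep these remainders under control over a fixed time interval, equivalently to rule out any transfer of mass between the two modes, and this is exactly what point $(ii)$ of Assumption~\ref{assumption} secures through the uniform bounds \eqref{eigen} on the eigenprojectors. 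Where the eigenvalues cross, as in the example $\lambda_\pm(x)=\pm|x|$ of the introduction, the construction breaks down and the estimate fails; away from crossings it goes through, after which the $TT^*$ and Christ--Kiselev steps run as in the scalar case.
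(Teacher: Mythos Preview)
Your reduction of \eqref{Strichartz2} to \eqref{Strichartz1} via duality plus the Christ--Kiselev lemma is exactly what the paper does in Appendix~\ref{StrichartzAnnexe}: quote the homogeneous bound from \cite{FerRou}, dualise for the pair $(p_2,q_2)$, compose, and remove the time truncation by Christ--Kiselev (the non-endpoint restriction on $q_1,q_2$ enters for the same reason you give). On this half, your proposal and the paper coincide.

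Where you diverge is in the derivation of \eqref{Strichartz1} itself. The paper does not prove it: it imports \eqref{Strichartz1} as a black box from \cite{FerRou}, and explicitly notes (see Appendix~\ref{StrichartzAnnexe} and the comment in Section~\ref{d1}) that the proof in \cite{FerRou} goes through \emph{resolvent estimates} enabled by the long-range assumption~$(i)$, i.e.\ a smoothing-estimate route rather than a pointwise dispersive bound. Your outline instead proposes a short-time $L^1\to L^\infty$ dispersive estimate obtained from a parametrix, fed into Keel--Tao. That is a legitimate and classical alternative path to Strichartz inequalities, but for matrix-valued Schr\"odinger operators a Fujiwara-type parametrix with a clean $(\eps|t-s|)^{-d/2}$ decay is precisely what is \emph{not} available in the literature under these hypotheses; this is why the paper stresses that the scalar at-most-quadratic theory of \cite{Fujiwara79,Fujiwara} does not carry over and why it resorts to \cite{FerRou}. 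So your sketch is not wrong in spirit, but the step ``there is, for small $|t-s|$, a parametrix giving the dispersive bound'' is not something \cite{FerRou} supplies, and establishing it independently would be a substantial result in its own right. The resolvent-estimate route buys exactly this: it bypasses any pointwise dispersive bound for the full matrix propagator.
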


\begin{remark}
Let us first remark that the endpoint $(2,2d/(d-2))$ ($(2,\infty)$ if $d=2$) is excluded, as Strichartz estimates in \cite{FerRou} 
are not demonstrated for this pair. 
Besides, in \cite{FerRou}, the authors actually obtain the estimates with a localization. 
In view of point $(iii)$, the eigenprojectors are constant for $x$ large enough, and for this reason, we can drop the localization. 
This point is more explicitly discussed in Remarks 4 and 6 of \cite{FerRou}. 
Finally, let us emphasize that, thanks to the absence of crossing points, we obtain the same Strichartz estimates than in the scalar case, 
without any further loss. The estimates in the general case of \cite{FerRou}, where the eigenvalues might cross, are weaker. 
The procedure to obtain \eqref{Strichartz1} and \eqref{Strichartz2} is sketched in the Appendix~\ref{StrichartzAnnexe}. 
\end{remark}

\subsection{Strategy of the proof}
\label{strategy}
The main difficulty is due to the fact that the projectors do not commute with $P(\eps)$. 
We will adapt ideas of \cite{Adiab} to our situation. 
\\
\noindent
We study the problem for large time, assuming \eqref{B}.
\\
\noindent
We first observe that the function $\varphi^\eps$ satisfies the following equation :
\begin{equation}
 \label{eqphi}
i \eps \d_t \varphi^\eps + \dfrac{\eps^2}{2} \Delta \varphi^\eps - \lambda^+ (x) \varphi^\eps =  \Lambda \eps^{1 + d/2} 
|\varphi^\eps|^2 \varphi^\eps
- \mathcal{R}^\eps (t,x) \varphi^\eps,
\end{equation}
with $\varphi^\eps_0 (x) = \eps^{-d/4} e^{i\xi_0^+.
    (x-x_0^+)/\eps}a
\left(\frac{x-x_0^+}{\sqrt\eps}\right)$, 
for all $x \in \R^d$, and where
\begin{multline*}
 \mathcal{R}^\eps (t,x) = \lambda^+(x)- \lambda^+ (x^+(t)) -  \nabla \lambda^+(x^+(t))(x-x^+(t)) \\ 
-\dfrac{1}{2}\left\langle Q^+(t)(x-x^+(t)); (x-x^+(t)) \right\rangle.
\end{multline*}
We denote by $w^\eps$ the difference between the exact solution and the approximation, 
$$w^\eps (t,x) =  \psi^\eps (t,x) - \varphi^\eps (t,x) \chi^+(x),$$ which satisfies $w^\eps_{|t=0} = 0$ 
(or $w^\eps_{|t=0} = \mathcal{O}(\eps^{\gamma_0})$ in the framework of Remark \ref{perturbation}) and
\begin{equation*}
i \eps \d_t w^\eps (t,x) + \dfrac{\eps^2}{2} \Delta w^\eps (t,x) - V(x) w^\eps (t,x) = 
 \eps \widetilde{NL}^\eps (t,x) + \eps \widetilde{L}^\eps (t,x),
\end{equation*}
where
\begin{align*}
\widetilde{NL}^\eps & = \Lambda \eps^{d/2} \left( |\psi^\eps|^2_{\C^2} \psi^\eps - |\varphi^\eps|^2 \varphi^\eps \chi^+ \right), \\
\widetilde{L}^\eps & =  \eps^{-1} \mathcal{R}^\eps (t,x) \varphi^\eps \chi^+(x) - \dfrac{\eps}{2} \varphi^\eps \Delta \chi^+ 
- \eps \nabla \varphi^\eps d\chi^+\eps \nabla \varphi^\eps.
\end{align*}
Using \eqref{B} and \eqref{eigen}, we can treat the first and the second terms of $\widetilde{L}^\eps$ which gives:
$$ \eps^{-1} \mathcal{R}^\eps \varphi^\eps = \mathcal{O}\left(\sqrt{\eps} e^{Ct}\right),$$
and
$$\eps \varphi^\eps \Delta \chi^+ = \mathcal{O} (\eps e^{Ct}).$$
Observing that the last term satisfies the following equality:
\begin{equation*}
\eps \nabla \varphi^\eps = i \xi^+(t) \varphi^\eps + \mathcal{O} \left( \sqrt{\eps} e^{Ct} \right), 
\qquad \textrm{in} \; L^2(\R^d). 
\end{equation*}
and using \eqref{trajestimate}, we infer that the last term of $\widetilde{L}^\eps$ brings a difficulty, 
as it a priori presents an $\mathcal{O}(1)$ contribution. 
This is an obstruction to prove that $w^\eps$ is small when $\eps$ tends to zero.
Therefore, we have to introduce a correction term to $w^\eps$, to get rid of this difficulty.
\\
We denote by $g^\eps$, the function solving the Schr\"odinger equation
\begin{equation*}
i \eps \d_t g^\eps(t,x) + \dfrac{\eps^2}{2} \Delta g^\eps(t,x) - \lambda_-(x) g^\eps(t,x) = r(t,x) \varphi^\eps(t,x) 
\quad ; \quad g^\eps(0,x)=0,
\end{equation*}
where
\begin{equation}
 \label{r}
r(t,x) = - i \left\langle d\chi^+(x)\xi^+(t) , \chi^-(x) \right\rangle_{\C^2}. 
\end{equation}
Let us remark that the above-mentionned quantity is bounded with bounded derivatives, thanks to \eqref{eigen} and \eqref{trajestimate}:
\begin{equation}
 \label{r'}
\forall p \in \N, \forall \alpha \in \N^d, \exists C >0, \forall t \in \R, \forall x \in \R^d, \; | \d_t^p \d_x^\alpha r(t,x)| \leq C.
\end{equation}
To deal with the nonlinearity, we need to control the $L^2-$norm and $L^4-$norm of the correction term $g^\eps(t)$ 
and its derivatives; we have the following proposition, which holds for large time; it will be proved in Section \ref{correc}.
\begin{proposition}
 \label{L2}
Assuming \eqref{B}, for $p \in \N$, there exists $C=C(p)$ such that
$$ \|g^\eps(t)\|_{H_\eps^{p}} \lesssim e^{Ct}, \quad \forall t\geq 0.$$
Moreover, for all $\alpha \in \N^d$, there exists $C=C(\alpha)$ such that
$$\|\eps^{|\alpha|} \d_x^\alpha g^\eps(t) \|_{L^4} \lesssim \eps^{-d/4}e^{Ct}, \quad \forall t \geq 0.$$
\end{proposition}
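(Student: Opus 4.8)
\emph{Proof proposal.} The function $g^\eps$ solves a \emph{linear} inhomogeneous Schr\"odinger equation, with the smooth potential $\lambda_-$ (bounded together with all its derivatives by \eqref{eigen}) and the prescribed source $r\varphi^\eps$, so its global existence is immediate. The plan is to prove the $H_\eps^p$ bound first and then to deduce the $L^4$ bound from it by Gagliardo--Nirenberg. The only genuine difficulty is that the crude energy estimate, $\|g^\eps(t)\|_{L^2}\le\eps^{-1}\int_0^t\|r\varphi^\eps(s)\|_{L^2}\,ds\lesssim\eps^{-1}e^{Ct}$, loses a factor $\eps^{-1}$: forcing a semiclassical Schr\"odinger equation during a time of order one by a source of size one produces, a priori, a response of size $\eps^{-1}$. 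The gain comes from a \emph{non-resonance}: $\varphi^\eps$ oscillates with the phase attached to $\lambda_+$, whereas the free evolution in the equation for $g^\eps$ is governed by $\lambda_-$, and point $(ii)$ of Assumption~\ref{assumption} provides the uniform gap $\lambda_+(x)-\lambda_-(x)=2\sqrt{\rho^2(x)+\om^2(x)}\ge 2\sqrt{\delta_0}>0$. The plan is to solve away the resonant response explicitly.

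First I would set $a_0(t,x):=r(t,x)/(\lambda_+(x)-\lambda_-(x))$; by \eqref{eigen}, \eqref{r'} and the gap above, $a_0$ is bounded on $\R\times\R^d$ together with all its $t$- and $x$-derivatives. Writing $g^\eps=a_0\,\varphi^\eps+h^\eps$ and using equation \eqref{eqphi} for $\varphi^\eps$, a direct computation shows that the resonant contribution $(\lambda_+-\lambda_-)a_0\varphi^\eps=r\varphi^\eps$ cancels the source exactly, and that $h^\eps$ satisfies
\[
i\eps\d_t h^\eps+\tfrac{\eps^2}{2}\Delta h^\eps-\lambda_-(x)\,h^\eps=F^\eps,\qquad h^\eps(0)=-a_0(0)\,\varphi^\eps(0),
\]
with
\[
F^\eps=-i\eps(\d_t a_0)\varphi^\eps-\eps^2\,\nabla a_0\cdot\nabla\varphi^\eps-\tfrac{\eps^2}{2}(\Delta a_0)\varphi^\eps-\Lambda\eps^{1+d/2}a_0|\varphi^\eps|^2\varphi^\eps+a_0\,\mathcal R^\eps\varphi^\eps.
\]
The point of the substitution is that $F^\eps$ is of size $\eps$ in every norm $H_\eps^p$ needed, uniformly in $\eps\in]0,1]$. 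Indeed, from the scaling $\varphi^\eps(t,\cdot)=\eps^{-d/4}u\big(t,(\cdot-x^+(t))/\sqrt\eps\big)e^{i(\cdots)/\eps}$ one obtains $\|\eps^{|\alpha|}\d_x^\alpha\varphi^\eps(t)\|_{L^2}\lesssim e^{Ct}$, the leading contribution being $(\xi^+(t))^\alpha\varphi^\eps$ and controlled by \eqref{trajestimate} and \eqref{B}; in particular $\|\eps\nabla\varphi^\eps(t)\|_{L^2}\lesssim e^{Ct}$. The cubic term satisfies $\big\||\varphi^\eps|^2\varphi^\eps\big\|_{H_\eps^p}\lesssim\eps^{-d/2}e^{Ct}$, so that its prefactor $\eps^{1+d/2}$ --- exactly the critical power $\beta_c=1+d/2$ --- brings it down to size $\eps$; and $\|\mathcal R^\eps\varphi^\eps\|_{H_\eps^p}\lesssim\eps^{3/2}e^{Ct}$, the Taylor remainder being of order three, as already used in Section~\ref{strategy}. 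Together with the boundedness of $a_0$ and its derivatives, and with \eqref{B} (used through Gagliardo--Nirenberg to pass to $L^6$ for the cubic term), this gives $\|F^\eps(t)\|_{H_\eps^p}\lesssim\eps\,e^{Ct}$, while $\|h^\eps(0)\|_{H_\eps^p}\lesssim 1$.

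It then remains to run an energy estimate for $h^\eps$ in $H_\eps^p$. Applying $\eps^{|\alpha|}\d_x^\alpha$ to the equation, the commutator $[\lambda_-,\eps^{|\alpha|}\d_x^\alpha]$ sends $H_\eps^{|\alpha|-1}$ into $L^2$ with norm $O(\eps)$ (because $\lambda_-$ is bounded with bounded derivatives, \eqref{eigen}), while the Laplacian and the real potential $\lambda_-$ give the standard $L^2$ energy identity; summing these identities over $|\alpha|\le p$ yields
\[
\frac{d}{dt}\|h^\eps(t)\|_{H_\eps^p}\lesssim \|h^\eps(t)\|_{H_\eps^p}+\tfrac1\eps\|F^\eps(t)\|_{H_\eps^p}\lesssim \|h^\eps(t)\|_{H_\eps^p}+e^{Ct},
\]
so that $\|h^\eps(t)\|_{H_\eps^p}\lesssim e^{Ct}$ by Gr\"onwall's lemma. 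Since $\|a_0\varphi^\eps(t)\|_{H_\eps^p}\lesssim\|\varphi^\eps(t)\|_{H_\eps^p}\lesssim e^{Ct}$, the first claim follows. For the second, I would apply the Gagliardo--Nirenberg inequality $\|f\|_{L^4}\lesssim\|\nabla f\|_{L^2}^{d/4}\|f\|_{L^2}^{1-d/4}$ (valid for $d\in\{2,3\}$) with $f=\eps^{|\alpha|}\d_x^\alpha g^\eps$: by the first claim $\|f\|_{L^2}\lesssim e^{Ct}$ and $\|\nabla f\|_{L^2}=\eps^{-1}\|\eps^{|\alpha|+1}\d_x^\alpha\nabla g^\eps\|_{L^2}\lesssim\eps^{-1}e^{Ct}$, whence $\|\eps^{|\alpha|}\d_x^\alpha g^\eps\|_{L^4}\lesssim(\eps^{-1}e^{Ct})^{d/4}(e^{Ct})^{1-d/4}=\eps^{-d/4}e^{Ct}$.

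The main obstacle is the $\eps^{-1}$ loss in the naive bound; it is removed by the algebraic decoupling $g^\eps=a_0\varphi^\eps+h^\eps$, which rests entirely on the spectral gap (point $(ii)$ of Assumption~\ref{assumption}). The only delicate point afterwards is checking that the $L^2$-supercritical cubic term really is of size $\eps$ once divided by $\eps$ in Duhamel's formula --- and this is exactly what the choice $\beta=\beta_c=1+d/2$ guarantees. For finite time, \eqref{B} may be replaced throughout by Proposition~\ref{A}, and the exponential factor disappears.
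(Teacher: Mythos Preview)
Your argument is correct, but it follows a genuinely different route from the paper. The paper does not make the ansatz $g^\eps=a_0\varphi^\eps+h^\eps$; instead it writes $\widetilde{\varphi^\eps}=r\varphi^\eps$ as a solution of the $\lambda_+$-equation with a source $\eps f^\eps$, expresses $g^\eps$ through Duhamel, and then appeals to an operator lemma (Lemma~\ref{propagateurs}) stating that $\frac{1}{i\eps}\int_0^t U_+^\eps(-s)U_-^\eps(s)\,ds$ is bounded from $H_\eps^{p+1}$ to $H_\eps^p$ with norm $\lesssim e^{Ct}$; that lemma is proved by an integration by parts in $s$ after inserting $(\lambda_+-\lambda_-)^{-1}$, which is exactly the same non-resonance mechanism you exploit. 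Your substitution $a_0=r/(\lambda_+-\lambda_-)$ performs this integration by parts \emph{at the level of the PDE} rather than at the level of the propagators, and the remainder $h^\eps$ is then handled by a straight $H_\eps^p$ energy estimate and Gr\"onwall. The $L^4$ step is essentially the same in both: the paper phrases it as the embedding $H^{d/4}\hookrightarrow L^4$ followed by interpolation between $L^2$ and $H^1$, you phrase it as Gagliardo--Nirenberg.

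What each approach buys: the paper's route isolates a reusable operator statement (Lemma~\ref{propagateurs}) that can be invoked again elsewhere, at the cost of losing one derivative ($H_\eps^{p+1}\to H_\eps^p$) and hence requiring one extra order of control on $u$ for the remainder term. Your route is more self-contained and slightly more economical in regularity, and it gives the extra information that $g^\eps=a_0\varphi^\eps+O_{H_\eps^p}(e^{Ct})$, i.e.\ it identifies the leading-order shape of the correction. Either way, the entire gain over the naive $\eps^{-1}$ bound rests on the gap in point~$(ii)$ of Assumption~\ref{assumption}.
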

\smallbreak
We now set 
 $$ \theta^\eps(t,x) = w^\eps (t,x) + \eps g^\eps (t,x) \chi_-(x).$$
This function then solves
\begin{equation}
 \label{theta}
i \eps \d_t \theta^\eps(t,x) + \dfrac{\eps^2}{2} \Delta \theta^\eps(t,x) - V(x) \theta^\eps(t,x) = 
\eps NL^\eps(t,x) + \eps L^\eps(t,x), \qquad \theta^\eps(0,x) = 0,
\end{equation}
with
\begin{equation}
 \label{NL}
NL^\eps = \Lambda \eps^{d/2} \left(|\varphi^\eps \chi_+ + \theta^\eps - \eps g^\eps \chi_-|^2_{\C^2}
(\varphi^\eps \chi_+ + \theta^\eps - \eps g^\eps \chi_-) - |\varphi^\eps|^2\varphi^\eps\chi_+ \right) 
\end{equation}
\begin{align}
\notag
L^\eps & = \widetilde{L}^\eps + \left(i \eps \d_t g^\eps + \dfrac{\eps^2}{2} \Delta g^\eps - \lambda_-(x) g^\eps \right)\chi_-
 +\eps^2  d\chi_-\nabla g^\eps + \dfrac{\eps^2}{2} g^\eps \Delta \chi_-, \\
\label{L}
 & = \mathcal{O}(\sqrt{\eps}e^{Ct}) +\eps^2  d\chi_-\nabla g^\eps + \dfrac{\eps^2}{2} g^\eps \Delta \chi_-,
\end{align}
where the $\mathcal{O}(\sqrt{\eps}e^{Ct})$ holds in $L^2$. 
Then, using the control of the eigenvectors \eqref{eigen} and the control of $g^\eps$, given by Proposition \ref{L2}, we infer
$$\| L^\eps(t)\|_{L^2} = \mathcal{O}(\sqrt{\eps}e^{Ct}),$$
where $C$ is independent of $\eps$.
\begin{remark}
\label{finite}
In view of Proposition \ref{A}, for all $T>0$, there exists a constant $C>0$ independent of $\eps$, such that
$$\|L^\eps(t)\|_{L^2}\leq C \eps^{1/2}, \quad \forall t \in [0,T].$$
Besides, we can write Proposition \ref{L2} for finite time intervals, which gives : 
\\
\noindent
Set $T>0$, then for $p \in \N$, there exists $C = C(T,p)$ such that
$$ \| g^\eps(t)\|_{H^p_\eps} \leq C, \quad \forall t \in [0,T];$$
and for $\alpha \in \N^d$, there exists $C=C(\alpha, T)$ such that  
$$ \|\eps^{|\alpha|}\d_x^\alpha g^\eps(t) \|_{L^4} \leq C \eps^{-d/4}, \forall t \in [0,T].$$
These estimates will be useful to deal with finite time intervals.
\end{remark}
\noindent
The proof of Proposition \ref{L2}, about $g^\eps$, is presented in the following section.
Secondly, the final step of the main proof, analysing the behaviour of $\theta^\eps$ as $\eps$ goes to zero is studied in Section 
\ref{last} for finite time case, and in Section \ref{largetimecase} for infinite times. 
Then, the behaviour of the profile $u$, important for large time case, is discussed in Section \ref{sec:growth} 
and the analysis of the one dimension case is done in Section~\ref{d1}.
Finally, Section \ref{NLsuperposition} is devoted to the proof 
of superposition results (Theorems \ref{superposition1} and \ref{superposition2}).

\section{Proof of the main results}
\subsection{Estimate of the correction term}
\label{correc}
In this section, we prove Proposition~\ref{L2}, assuming that we have the exponential control \eqref{B}. The proof of Remark \ref{finite} 
follows the same lines, in view of Proposition \ref{A}.
\\
\noindent 
In view of the control of the classical trajectories and of the profile $u$, for all $p \in \N$, there exists $C=C(p)$, such that
\begin{equation}
\label{ExpPhi}
 \| \varphi^\eps (t) \|_{H_\eps^p} \lesssim e^{Ct}, \quad \forall t \geq 0.
\end{equation}
Besides, if we have the exponential control of $u$ and of its derivatives, stated in \eqref{B}, we note that 
$\d_y^\alpha u(t,.)$ is in $L^\infty$ for all $\alpha \in \N^d$. From this estimate, we infer
\begin{equation}
 \label{LInfPhi}
\forall \alpha \in \N^d, \; \exists C = C(\alpha), \quad 
\| \eps^{|\alpha|} \d_x^\alpha \varphi^\eps(t)\|_{L^\infty} \lesssim \eps^{-d/4} e^{Ct}.
\end{equation}
Write $U^\eps_\pm (t)=e^{i\frac{t}{\eps}p_\pm(\eps)}$, 
the semi-group associated with the operator $$p_\pm(\eps):=-\dfrac{\eps^2}{2} \Delta + \lambda_\pm(x).$$
We observe that for $p \in \N$, there exists a constant $K=K(p)$ such that
\begin{equation}
\label{ExpPropagateurs}
 \| U^\eps_\pm (t) \|_{\mathcal{L}(H_\eps^p)} \lesssim e^{K|t|}.
\end{equation}
For $\lambda_+$ and $\lambda_-$ as in Assumption \ref{assumption}, 
we have the following lemma, which will be needed to estimate the correction term $g^\eps$. 
Note that the crucial point is that the eigenvalues satisfy Point $(ii)$ of Assumption \ref{assumption}.
\begin{lemma}
 \label{propagateurs}
For $T>0$, there exists a constant $C$ such that
$$\forall t \in [0,T], \quad \forall p \in \N, \quad 
\left\|\dfrac{1}{i\eps} \int_0^t U_+^\eps (-s) U_-^\eps (s) ds \right\|_{\mathcal{L}(H_\eps^{p+1},H_\eps^{p})}\leq C e^{Ct}. $$
The same estimate remains valid if we permute $U_+^\eps$ and $U_-^\eps$.
\end{lemma}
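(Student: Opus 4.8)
The key algebraic observation is that the composed propagator $U_+^\eps(-s)U_-^\eps(s)$ is, up to a factor, the derivative of something manageable. Indeed, since $U_\pm^\eps$ are the semigroups generated by $p_\pm(\eps)/(i\eps)$, we compute
\begin{equation*}
\frac{d}{ds}\left(U_+^\eps(-s)U_-^\eps(s)\right) = \frac{1}{i\eps}U_+^\eps(-s)\bigl(\lambda_-(x)-\lambda_+(x)\bigr)U_-^\eps(s),
\end{equation*}
where I have used that $p_-(\eps)-p_+(\eps) = \lambda_-(x)-\lambda_+(x)$ is a multiplication operator (the Laplacian parts cancel). By Point $(ii)$ of Assumption \ref{assumption} and the smoothness/boundedness estimates \eqref{eigen}, the function $\lambda_+(x)-\lambda_-(x) = 2\sqrt{\rho^2+\om^2}$ is smooth, bounded below by $2\sqrt{\delta_0}>0$, bounded above, and has all derivatives bounded; hence $\mu(x):=(\lambda_+(x)-\lambda_-(x))^{-1}$ enjoys the same properties, and in particular multiplication by $\mu$ is bounded on every $H_\eps^p$ uniformly in $\eps$ (since $\eps^{|\alpha|}\d_x^\alpha(\mu f)$ expands via Leibniz into terms $\eps^{|\alpha|}(\d^\gamma\mu)(\d^{\alpha-\gamma}f)$, each controlled by $\|f\|_{H_\eps^p}$ because $\eps\le1$).

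The plan is then to \emph{integrate by parts in $s$}. Write
\begin{equation*}
\frac{1}{i\eps}\int_0^t U_+^\eps(-s)U_-^\eps(s)\,ds = \int_0^t U_+^\eps(-s)\,\mu(x)\,\bigl(\lambda_--\lambda_+\bigr)(x)\,U_-^\eps(s)\cdot\frac{1}{i\eps}\,ds,
\end{equation*}
and recognize $\frac{1}{i\eps}(\lambda_--\lambda_+)U_-^\eps(s) = \frac{d}{ds}\bigl(U_+^\eps(s)\cdot\text{(something)}\bigr)$; more cleanly, set $M(x)=-\mu(x)$ so that the integrand becomes $U_+^\eps(-s)\,M(x)\,\frac{d}{ds}\bigl(U_+^\eps(s)\bigr)^{-1}\!U_-^\eps(s)$ reorganized so that one factor is a total $s$-derivative of $U_+^\eps(-s)U_-^\eps(s)$ itself. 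Concretely, since $\frac{d}{ds}(U_+^\eps(-s)U_-^\eps(s)) = \frac{1}{i\eps}U_+^\eps(-s)(\lambda_--\lambda_+)(x)U_-^\eps(s)$, we have
\begin{equation*}
U_+^\eps(-s)U_-^\eps(s) = \frac{d}{ds}\Bigl[U_+^\eps(-s)\,\mu(x)\,U_-^\eps(s)\Bigr] - \Bigl(\text{commutator terms from }\mu\text{ not commuting with }p_+\Bigr).
\end{equation*}
Integrating the first piece gives the boundary term $U_+^\eps(-t)\mu(x)U_-^\eps(t) - \mu(x)$, which by \eqref{ExpPropagateurs} and the boundedness of multiplication by $\mu$ is $\mathcal{O}(e^{Ct})$ in $\mathcal{L}(H_\eps^p)$ — no loss of a power of $\eps$, hence the gain of one derivative ($H_\eps^{p+1}\to H_\eps^p$ is even stronger than needed, but the commutator terms will consume it). The commutator $[\mu, p_+] = -\frac{\eps^2}{2}[\mu,\Delta] = \eps^2(\cdots\d_x\cdots) + \eps^2(\cdots)$ carries a factor $\eps^2$ against the $1/(i\eps)$, producing an overall factor $\eps$ times a first-order differential operator, i.e. it maps $H_\eps^{p+1}$ to $H_\eps^{p}$ with an $\mathcal{O}(\eps)$ bound — after one more integration in $s\in[0,t]$ this is $\mathcal{O}(\eps\, t\, e^{Ct})$, absorbed into $Ce^{Ct}$.

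The main obstacle is bookkeeping the non-commutativity of the multiplication operator $\mu(x)$ with the free propagators $U_\pm^\eps$: one must verify that every commutator generated carries enough powers of $\eps$ (it does: each commutator with $p_\pm(\eps)$ yields $\eps^2[\,\cdot\,,\Delta]$ against the prefactor $\eps^{-1}$, net $\eps^{+1}$) while costing at most one derivative, and that the iteration of this integration-by-parts terminates after finitely many steps with all remainders controlled in $H_\eps^p$ uniformly in $\eps$. Once the single integration by parts above is done carefully, the bound $Ce^{Ct}$ follows from \eqref{ExpPropagateurs} and the uniform $H_\eps^p$-boundedness of multiplication by $\mu$ and by its derivatives; the roles of $U_+^\eps$ and $U_-^\eps$ are symmetric since $\lambda_+-\lambda_-$ only changes sign, giving the last sentence of the statement for free.
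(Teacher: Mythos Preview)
Your proposal is correct and follows essentially the same approach as the paper: write $\frac{d}{ds}\bigl(U_+^\eps(-s)U_-^\eps(s)\bigr)=\frac{1}{i\eps}U_+^\eps(-s)(\lambda_--\lambda_+)U_-^\eps(s)$, invert $(\lambda_+-\lambda_-)$ using Point~$(ii)$, integrate by parts to produce the boundary term $[U_+^\eps(-s)\mu U_-^\eps(s)]_0^t$ plus a remainder governed by the commutator $[-\tfrac{\eps^2}{2}\Delta,\mu]=\eps(\text{first-order operator})$, and then bound everything via \eqref{ExpPropagateurs} and the uniform $H_\eps^p$-boundedness of multiplication by $\mu$ and its derivatives. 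A single integration by parts suffices (no iteration is needed), exactly as you conclude in your final paragraph.
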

\begin{proof}
This proof follows \cite{Adiab}, Lemma 3.1.
We first notice that
$$U_+^\eps(-t) U_-^\eps(t) = i \eps U_+^\eps(-t)\left(\lambda_+ 
- \lambda_- \right)^{-1} U_+^\eps(t) \d_t \left(U_+^\eps(-t)U_-^\eps(t)\right).$$
Then, by integration by parts
\begin{align*}
 \dfrac{1}{i\eps} \int_0^t U_+^\eps (-s) U_-^\eps (s) ds = 
& \left[U_+^\eps(-s)\left(\lambda_+ - \lambda_- \right)^{-1} U_-^\eps (s)\right]_0^t \\
& - \int_0^t \d_s \left(U_+^\eps(-s)\left(\lambda_+ - \lambda_- \right)^{-1}U_+^\eps(s) \right) U_+^\eps(-s)U_-^\eps(s) ds. 
\end{align*}
Write $\gamma = \left(\lambda_+ - \lambda_- \right)^{-1}$. 
Using \eqref{eigen}, and point $(ii)$ of Assumption \ref{assumption}, we infer that $\gamma$ is bounded with bounded derivatives:
\begin{equation*}
\forall \alpha \in \N^d, \; \exists C>0, \quad | \d_x^\alpha \gamma(x)| \leq C.
\end{equation*}
Since the propagators map continuously $H_\eps^{p}$ into itself, uniformly with respect to ~$\eps$, we infer
$$\forall p \in \N,\; \exists C=C(p), 
  \quad \| \left[U_+^\eps(-s) \gamma U_-^\eps(s) \right]_0^t\|_{\mathcal{L}(H_\eps^{p},H_\eps^{p})}\leq C(p)e^{C(p)t}.$$
Besides, we have
\begin{align*}
 \d_s \left(U_+^\eps(-s)\left(\lambda_+ - \lambda_- \right)^{-1}U_+^\eps(s) \right) & = 
 \dfrac{1}{i\eps} U_+^\eps(-s) \left[ -\dfrac{\eps^2}{2} \Delta, \gamma\right] U_+^\eps(s), \\
& = U_+^\eps(-s) \left( i \d_x \gamma(x) \eps \d_x + i \eps \d_x^2 \gamma(x)  \right) U_+^\eps(s).
\end{align*}
Combining :
\begin{align*}
 & \| U_+^\eps(-s) \d_x \gamma(x) \eps \d_x U_+^\eps(s) \|_{\mathcal{L}(H_\eps^{p+1}, H_\eps^{p})} \lesssim e^{Cs},  \\
 \textrm{and} \quad & \| U_+^\eps(-s) \d_x^2 \gamma(x)U_+^\eps(s) \|_{\mathcal{L}(H_\eps^{p}, H_\eps^{p})} \lesssim e^{Cs},
\end{align*}
we complete the proof.
\end{proof}
\noindent
We now prove Proposition \ref{L2}:
\begin{proof}[Proof of Proposition \ref{L2}]
We follow the steps of \cite{Adiab}, Proposition 3.2. 
\\
\noindent
Let us write $\widetilde{\varphi^\eps} (t,x) = r(t,x) \varphi^\eps(t,x)$, then we have
$$\left(i \eps \d_t + \dfrac{\eps^2}{2} \Delta - \lambda_+(x)\right)\widetilde{\varphi^\eps} =
\underbrace{\Lambda \eps^{1+d/2} |\varphi^\eps|^2 r \varphi^\eps - \mathcal{R^\eps} r \varphi^\eps + i\eps \d_t r \varphi^\eps}_{\eps f^\eps} 
+ \dfrac{\eps^2}{2}\left[\Delta, r \right]\varphi^\eps; $$
with $f^\eps = \Lambda \eps^{d/2} |\varphi^\eps|^2 r \varphi^\eps +  i\d_t r \varphi^\eps 
+ \dfrac{\eps}{2}\left[\Delta, r \right]\varphi^\eps - \eps^{-1} \mathcal{R^\eps} r \varphi^\eps$.
By Duhamel's formula, we obtain
$$\widetilde{\varphi^\eps}(t) = U_+^\eps(t)\widetilde{\varphi^\eps}(0) - i \int_0^t U_+^\eps(t-s) f^\eps(s) ds.$$
We deduce
\begin{align*}
g^\eps(t) & = \dfrac{1}{i\eps} \int_0^t U_-^\eps(t-s) \widetilde{\varphi^\eps}(s)ds,\\
          & = \dfrac{1}{i\eps} \int_0^t U_-^\eps(t-s)U_+^\eps (s)\widetilde{\varphi^\eps}(0) ds - 
 \int_0^t \dfrac{1}{\eps} \int_0^s U_-^\eps(t-s) U_+^\eps(s-\tau) f^\eps(\tau) d\tau ds.
\end{align*}
We write $U_-^\eps(t-s) = U_-^\eps(t-\tau)U_-^\eps(\tau-s)$ and applying Fubini's theorem, we obtain
\begin{multline*} 
 g^\eps(t) = \dfrac{1}{i\eps} \int_0^t U_-^\eps(t-s)U_+^\eps (s)\widetilde{\varphi^\eps}(0) ds \\
-\int_0^t \dfrac{1}{\eps}U_-^\eps(t-\tau) \int_\tau^t U_-^\eps(\tau-s) U_+^\eps(s-\tau) f^\eps(\tau) ds d\tau.
\end{multline*}
Using Lemma \ref{propagateurs}, we have
$$\|g^\eps\|_{H_\eps^{p}} \lesssim e^{Ct} + \int_0^t e^{C(t-\tau)} \|f^\eps(\tau)\|_{H_\eps^{p+1}} d\tau. $$
It remains to study $f^\eps$. We write $f^\eps = f_1^\eps + f_2^\eps$, with
$$f_1^\eps = i \d_t r \varphi^\eps + \dfrac{\eps}{2}\left[\Delta, r \right]\varphi^\eps - \eps^{-1}\mathcal{R^\eps} r \varphi^\eps, 
\quad \textrm{and} \quad 
f_2^\eps = \Lambda \eps^{d/2} |\varphi^\eps|^2 r \varphi^\eps.$$
By \eqref{r'} and \eqref{ExpPhi}, it is straightforward that
$$\|f_1^\eps(t)\|_{H_\eps^{p+1}} \lesssim e^{Ct},$$
provided that $(Exp)_{p+1+3}$ is satisfied, to deal with the term $\eps^{-1}\mathcal{R^\eps} r \varphi^\eps$. 
\\
\noindent
Besides
\begin{align*}
 \|f_2^\eps(t)\|_{H_\eps^{p+1}} & = \eps^{d/2} \|r |\varphi^\eps|^2 \varphi^\eps \|_{H_\eps^{p+1}}, \\
& \lesssim \eps^{d/2} \|\varphi^\eps \|_{H_\eps^{p+1}} 
\sup_{0\leq |\alpha| \leq p+1}\|\eps^{|\alpha|} \d_x^\alpha \varphi^\eps\|_{L^\infty}^2 \\
& \lesssim \eps^{d/2}(\eps^{-d/4})^2 e^{Ct} \lesssim e^{Ct},
\end{align*}
where we have used the control of $\varphi^\eps$, \eqref{ExpPhi} and \eqref{LInfPhi}, and the proof is complete.
\\
The proof of the other estimate is based on a Sobolev embedding and on H\"older inequality.
\\
\noindent
Let $p\in \N$ and $\alpha \in \N^d$, such that $|\alpha|\leq p$. 
We first notice that $H^{d/4}(\R^d) \hookrightarrow L^4(\R^d)$, and infer
$$\| \eps^{|\alpha|} \d_x^\alpha g^\eps(t) \|_{L^4} \leq c \|\eps^{|\alpha|} \d_x^\alpha g^\eps(t) \|_{H^{d/4}}.$$
Besides, we introduce the following Lebesgue exponents
$$q=\dfrac{4}{d}, \quad r=\dfrac{4}{4-d}.$$
Using the interpolation inequality,
$$\|\eps^{|\alpha|} \d_x^\alpha g^\eps(t) \|_{H^{d/4}}\leq 
c \|\eps^{|\alpha|} \d_x^\alpha g^\eps(t)\|_{L^2}^{1-d/4} \|\eps^{|\alpha|} \d_x^\alpha g^\eps(t) \|_{H^1}^{d/4},$$
we write
\begin{align*}
\|\eps^{|\alpha|} \d_x^\alpha g^\eps(t) \|_{H^1} & = \|\eps^{|\alpha|} \d_x^\alpha g^\eps(t)\|_{L^2} 
+ \|\d_x \left( \eps^{|\alpha|} \d_x^\alpha g^\eps(t)\right)\|_{L^2}, \\
& \lesssim \| g^\eps(t) \|_{H_\eps^p} + \eps^{-1} \|g^\eps(t) \|_{H_\eps^{p+1}},
\end{align*}
and, using the first estimate of Proposition \ref{L2}, the proof is complete.
\end{proof}

\subsection{End of the proof of Theorem \ref{main}.}
\label{last}
We now prove Theorem \ref{main}. In this section, we consider finite time intervals, we will use the estimates of Proposition~\ref{A}, 
which imply Remark \ref{finite}. 
We divide the proof into three steps : first, we will analyse 
a Strichartz norm of $\varphi^\eps$, which will lead to introduce a bootstrap argument. Then, using the bootstrap assumption, we will prove 
the theorem, before checking the validity of the bootstrap in the final step.
\smallbreak
\noindent
\textbf{Step one :} 
\\
\noindent 
We recall the equation satisfied by the rest $\theta^\eps$, \eqref{theta} :
\begin{equation*}
 i \eps \d_t \theta^\eps (t,x) +  \dfrac{\eps^2}{2} \Delta \theta^\eps (t,x) - V(x) \theta^\eps (t,x) =
\eps NL^\eps + \eps L^\eps \quad ; \quad \theta^\eps (0,x) = 0,
\end{equation*}
where $ NL^\eps$ and $ L^\eps$ are defined in \eqref{NL} and \eqref{L}, respectively.
The Duhamel formula gives
\begin{align*}
 \theta^\eps(t+ \tau) & = e^{i \frac{\tau}{\eps}P(\eps)} \theta^\eps (t) 
-i \int_{t}^{t+\tau} e^{\frac{i}{\eps}(t+\tau-s)P(\eps)} NL^\eps (s) ds \\
& -i \int_{t}^{t+\tau} e^{\frac{i}{\eps}(t+\tau-s)P(\eps)} L^\eps (s) ds.
\end{align*}
We introduce the following Lebesgue exponents:
$$ p = \dfrac{8}{d} \quad ; \quad q = 4\quad ; \quad \sigma = \dfrac{8}{4-d} . $$
Then, $(p,q)$ is admissible, and
$$\dfrac{1}{p'} = \dfrac{2}{\sigma} + \dfrac{1}{p} \quad ; \quad \dfrac{1}{q'} = \dfrac{3}{4} = \dfrac{2}{q} + \dfrac{1}{q}. $$
Let $t \geq 0, \tau >0$ and $I = [t, t+\tau]$. Strichartz estimates of Theorem \ref{StrichartzEstimates} yield
\begin{align*}
 \| \theta^\eps \|_{L^p(I, L^q)}  \lesssim & \; \eps^{-1/p} \| \theta^\eps (t)\|_{L^2} + \eps^{- 1/p} \| L^\eps \|_{L^1(I, L^2)} \\
& + \eps^{- 2/p} \| NL^\eps  \|_{L^{p'}(I, L^{q'})}.
\end{align*}
In view of the pointwise estimate
\begin{multline}
 \Bigl| |\varphi^\eps \chi_+ + \theta^\eps - \eps g^\eps \chi_-|^2_{C^2}
(\varphi^\eps \chi_+ + \theta^\eps - \eps g^\eps \chi_-) - |\varphi^\eps|^2\varphi^\eps\chi_+ \Bigr| 
  \\
 \lesssim \left( |\varphi^\eps|^2 + |\theta^\eps|^2 + \eps^2 |g^\eps |^2 \right) \left( |\theta^\eps| + |\eps g^\eps| \right),
\end{multline}
and using H\"older inequality, we infer
\begin{align*}
\| \theta^\eps \|_{L^p(I, L^q)}  \lesssim & \; \eps^{-1/p} \| \theta^\eps (t)\|_{L^2} + \eps^{- 1/p} \| L^\eps \|_{L^1(I, L^2)} 
+ \eps^{d/2 - 2/p} \left(\|\varphi^\eps \|_{L^\sigma(I, L^q)}^2 \right. \\
& \left. + \|\theta^\eps \|_{L^\sigma(I, L^q)}^2  + \eps^2 \|g^\eps \|_{L^\sigma(I, L^q)}^2 \right)
\left( \|\theta^\eps \|_{L^p(I, L^q)} + \eps \|g^\eps \|_{L^p(I, L^q)} \right). 
\end{align*}
We have
\begin{equation}
\label{phiL4}
 \| \varphi^\eps (t)\|_{L^4_x} = \eps^{-d/8} \|u(t)\|_{L^4_y} \leq C(T) \eps^{-d/8},
\end{equation}
with $y= \dfrac{x-x^+(t)}{\sqrt{\eps}}$. Besides, using Proposition \ref{L2} again, we obtain the estimate
\begin{equation}
 \label{g}
\eps^2 \|g^\eps(t)\|_{L^4}^2 \leq C(T) \eps^{2-d/2},
\end{equation}
with $2- d/2 \geq 1/2$.
\\
\noindent
Therefore, it is natural to perform a bootstrap argument assuming, say
\begin{equation}
 \label{bootstrap}
\|\theta^\eps(t)\|_{L^4} \lesssim \;\eps^{-d/8}.
\end{equation}
In the rest of the proof, we will not mention dependance in $T$ of the terms.
\smallbreak
\noindent
\textbf{Step two :} 
\\
\noindent
In this step, we assume that \eqref{bootstrap} holds on $[0,T]$ and show :
\begin{equation}
\label{3.10'}
 \| \theta^\eps (t) \|_{L^2} \leq C(T) \eps^{1/2},
\end{equation}
and
\begin{equation}
 \label{3}
\| \theta^\eps \|_{L^p([0,t],L^q)} \lesssim \eps^{1/2 - d/8}.
\end{equation}
\noindent
As long as \eqref{bootstrap} holds, we have for all $s \in I$ :
\begin{multline*}
 \| \theta^\eps \|_{L^p(I, L^q)} \lesssim \; \eps^{-1/p} \| \theta^\eps (s)\|_{L^2}\\ 
 + \eps^{- 1/p} \| L^\eps \|_{L^1(I, L^2)} + \tau^{2/\sigma} \left( \| \theta^\eps \|_{L^p(I, L^q)} 
+ \eps \| g^\eps \|_{L^p(I, L^q)} \right). 
\end{multline*}
where we have used \eqref{phiL4}, \eqref{g} and \eqref{bootstrap}, with
$$\dfrac{d}{2} - \dfrac{2}{p} - \dfrac{2d}{8} = 0.$$
Integrating in $s$, between $t$ and $t+\tau$, we get
\begin{multline*}
 \| \theta^\eps \|_{L^p(I, L^q)}   \lesssim \; \eps^{-1/p} \tau^{-1} \| \theta^\eps\|_{L^1(I,L^2)}\\ 
 + \eps^{- 1/p} \| L^\eps \|_{L^1(I, L^2)} + \tau^{2/\sigma} \left( \| \theta^\eps \|_{L^p(I, L^q)} 
+ \eps \| g^\eps \|_{L^p(I, L^q)} \right). 
\end{multline*}
We choose $\tau \ll 1$ and from now, $\tau >0$ is fixed, so that $\tau^{-1}$ is a constant.
We recover the interval $[0,T]$ with a finite number of intervals of the form $[j\tau, (j+1)\tau]$ and obtain
\begin{equation}
 \label{1}
\| \theta^\eps \|_{L^p([0,T], L^q)}  \lesssim \eps^{-1/p} \| \theta^\eps\|_{L^1([0,T],L^2)} + \eps^{- 1/p} \| L^\eps \|_{L^1([0,T], L^2)} 
+ \eps \| g^\eps \|_{L^p([0,T], L^q)}.
\end{equation}
Using Strichartz estimates again, and previous estimates, we have, for $0 \leq t \leq T$,
\begin{align}
\notag
 \| \theta^\eps \|_{L^\infty([0,t],L^2)}& \lesssim \| L^\eps \|_{L^1([0,t],L^2)} + \eps^{-1/p} \| NL^\eps \|_{L^{p'}([0,t],L^{q'})}, \\
\label{***}
& \lesssim \| L^\eps \|_{L^1([0,t], L^2)} 
+ \eps^{d/2 - 1/p} \left(\|\varphi^\eps \|_{L^\sigma([0,t], L^q)}^2 + \|\theta^\eps \|_{L^\sigma([0,t], L^q)}^2\right. \\
\notag
& \quad \left.  + \eps^2 \|g^\eps \|_{L^\sigma([0,t], L^q)}^2 \right) 
\left( \|\theta^\eps \|_{L^p([0,t], L^q)} + \eps \|g^\eps \|_{L^p([0,t], L^q)} \right).
\end{align}
Thanks to \eqref{phiL4}, \eqref{g}, \eqref{1}, and under \eqref{bootstrap}, we obtain
\begin{align*}
\| \theta^\eps \|_{L^\infty([0,t],L^2)}& \lesssim \| L^\eps \|_{L^1([0,t],L^2)} + \eps^{1+1/p} \|g^\eps \|_{L^p([0,t], L^q)}
 + t^{2/\sigma}\| \theta^\eps \|_{L^1([0,t],L^2)},\\
& \lesssim \| L^\eps \|_{L^1([0,t],L^2)}+ \eps^{1-d/8} + \| \theta^\eps \|_{L^1([0,t],L^2)}.
\end{align*}
We use the following estimate, given in Remark \ref{finite}, for $t \in [0,T]$~: 
$$\|L^\eps\|_{L^1([0,t],L^2)}\lesssim \eps^{1/2}.$$
We notice that $1/2 \leq 1-d/8$, for $d=2,3$, and we infer
\begin{equation}
 \label{2}
\| \theta^\eps \|_{L^\infty([0,t],L^2)} \lesssim \eps^{1/2}
 + \| \theta^\eps \|_{L^1([0,t],L^2)}.
\end{equation}
By Gronwall Lemma, we obtain the estimate \eqref{3.10'}.
Combining \eqref{1} and \eqref{2}, we obtain the announced estimate \eqref{3}, under \eqref{bootstrap}, which concludes this step.
\smallbreak
\noindent
\textbf{Step three:}  
\\
\noindent
It remains to check how long the bootstrap assumption \eqref{bootstrap} holds. 
For this, we look for a control of $\theta^\eps(t)$ in $H_\eps^1$.
We differentiate the system \eqref{theta} with respect to $x$, and we find
$$\left\lbrace
\begin{array}{rl}
 i \eps \d_t (\eps \nabla \theta^\eps)+ \dfrac{\eps^2}{2} \Delta (\eps \nabla \theta^\eps) - V(x) \eps \nabla \theta^\eps & = 
\eps \nabla V(x) \theta^\eps + \eps^2 \nabla NL^\eps + \eps^2 \nabla L^\eps, \\
\eps \nabla \theta^\eps (0,x)& =0.
\end{array}\right.
$$
Using Strichartz estimates again, we find
\begin{align*}
\| \eps \nabla \theta^\eps \|_{L^p(I, L^q)}  \lesssim & \; \eps^{-1/p} \| \eps \nabla \theta^\eps (s)\|_{L^2} 
+ \eps^{- 1/p} \| \nabla V \theta^\eps \|_{L^1(I, L^2)} \\
&\quad  + \eps^{- 1/p} \| \eps \nabla L^\eps \|_{L^1(I, L^2)} 
+ \eps^{- 2/p}\| \eps \nabla NL^\eps \|_{L^{p'}(I, L^{q'})}.
\end{align*}
We observe that, thanks to Assumption \ref{assumption}, $|\nabla V(x)| \leq C$. Besides, by Remark~\ref{finite}, we have
\begin{equation}
\label{3.11'}
\|\eps \nabla L^\eps(t)\|_{L^2} \lesssim \sqrt{\eps}, \quad \textrm{for} \; 0 \leq t \leq T.
\end{equation}
The only point remaining concerns the term $\eps \nabla NL^\eps$. We have
\begin{multline*}
| \eps \nabla NL^\eps | \lesssim  \eps^{d/2} \left(|\varphi^\eps|^2 + |\theta^\eps|^2 + \eps^2 |g^\eps|^2\right)
 \left(|\eps \nabla \theta^\eps|+ \eps^2|\nabla (g^\eps \chi_-)|\right) \\
+ \eps^{d/2} |\eps \nabla \varphi^\eps| |\varphi^\eps| \left(|\theta^\eps|+\eps|g^\eps|\right).
\end{multline*}
We notice that 
$$ \| \eps \nabla \varphi^\eps (t) \|_{L^4}\lesssim \eps^{1/2-d/8} \|\nabla u(t)\|_{L^4}+ \eps^{-d/8}\|u(t)\|_{L^4} \lesssim 
C(T)\eps^{-d/8},$$
using Proposition \ref{A}, with $1/2-d/8>0$, we infer by \eqref{phiL4}
\begin{equation}
 \label{L2phi}
\| \varphi^\eps \times \eps\nabla \varphi^\eps (t)\|_{L^2} \lesssim \eps^{-d/4}.
\end{equation}
Then, we can write, thanks to H\"older inequality :
\begin{align*}
 \eps^{-2/p}\| \eps \nabla NL^\eps \|_{L^{p'}(I, L^{q'})} & \lesssim \; \eps^{d/2 - 2/p} 
\left( \|\varphi^\eps \|_{L^\sigma(I,L^q)}^2 + \| \theta^\eps \|_{L^\sigma(I,L^q)}^2 + \eps^2 \|g^\eps \|_{L^\sigma(I,L^q)}^2 \right)\\
&  \left( \| \eps \nabla \theta^\eps\|_{L^p(I, L^q)} + \eps^2 \| \nabla (g^\eps \chi_-)\|_{L^p(I, L^q)} \right) \\
  + & \eps^{d/2 - 2/p}\| \varphi^\eps \; \eps\nabla \varphi^\eps\|_{L^{\sigma/2}(I,L^2)}
\left(\| \theta^\eps\|_{L^p(I, L^q)} + \eps \|g^\eps\|_{L^p(I, L^q)} \right)
\end{align*}
The first part is handled as before, using estimates \eqref{phiL4}, \eqref{g} and \eqref{bootstrap}. 
For the second part, using \eqref{3} instead of \eqref{bootstrap}, we find
\begin{align*}
\eps^{-2/p} \| \eps \nabla NL^\eps \|_{L^{p'}(I, L^{q'})} & \lesssim \tau^{2/\sigma} \| \eps \nabla \theta^\eps\|_{L^p(I, L^q)} + 
\tau^{2/\sigma}\eps^{1/2-d/8},
\end{align*}
where we have used that $d/2 - 2/p - d/4 = 0$. We infer for $s \in I$ :
\begin{align*}
 \| \eps \nabla \theta^\eps \|_{L^p(I, L^q)}  \lesssim & \; \eps^{-1/p} \| \eps \nabla \theta^\eps (s)\|_{L^2} 
+ \eps^{- 1/p} \| \theta^\eps \|_{L^1(I, L^2)}+ \eps^{- 1/p} \| \eps \nabla L^\eps \|_{L^1(I, L^2)} \\
& \quad + \tau^{2/\sigma}\eps^{1/2-d/8} +\tau^{2/\sigma} \| \eps \nabla \theta^\eps\|_{L^p(I, L^q)}.
\end{align*}
By integration on $I$, we obtain :
\begin{align*}
 \| \eps \nabla \theta^\eps \|_{L^p(I, L^q)}  \lesssim & \; \eps^{-1/p} \tau^{-1}\| \eps \nabla \theta^\eps\|_{L^1(I,L^2)} 
+ \eps^{- 1/p} \| \theta^\eps \|_{L^1(I, L^2)} \\
& \quad + \eps^{- 1/p} \| \eps \nabla L^\eps \|_{L^1(I, L^2)}+ \tau^{2/\sigma}\eps^{1/2-d/8} 
+\tau^{2/\sigma} \| \eps \nabla \theta^\eps\|_{L^p(I, L^q)}.
\end{align*}
We choose $\tau$ sufficiently small to absorb the last term, and repeating this procedure a finite number of times, to recover $[0,T]$, 
we obtain
\begin{equation}
\label{4}
 \| \eps \nabla \theta^\eps \|_{L^p([0,T], L^q)} \lesssim \eps^{-1/p}\| \eps \nabla \theta^\eps\|_{L^1([0,T]L^2)} + \eps^{1/2-d/8},
\end{equation}
where we have used \eqref{3.10'}, \eqref{3.11'} and that 
$$ \| \theta^\eps \|_{L^1([0,T], L^2)} + \| \eps \nabla L^\eps \|_{L^1([0,T], L^2)}  \lesssim \eps^{1/2}.$$
Now, for $t \in [0,T]$, Strichartz estimates yield :
\\[2mm]
$\| \eps \nabla \theta^\eps \|_{L^\infty([0,t], L^2)}$
\begin{align*}
 \lesssim & \; \| \nabla V \theta^\eps \|_{L^1([0,t], L^2)} 
+  \| \eps \nabla L^\eps \|_{L^1([0,t], L^2)}+ \eps^{- 1/p}\| \eps \nabla NL^\eps \|_{L^{p'}([0,t], L^{q'})} \\
 \lesssim & \; \| \theta^\eps \|_{L^1([0,t], L^2)} + \| \eps \nabla L^\eps \|_{L^1([0,t], L^2)} \\
& \quad + \eps^{d/2-1/p} \left( \|\varphi^\eps \|_{L^\sigma([0,t],L^q)}^2 + \| \theta^\eps \|_{L^\sigma([0,t],L^q)}^2 
+ \eps^2 \|g^\eps \|_{L^\sigma([0,t],L^q)}^2 \right)\\
& \quad  \left( \| \eps \nabla \theta^\eps\|_{L^p([0,t], L^q)} + \eps^2 \| \nabla (g^\eps \chi_-)\|_{L^p([0,t], L^q)} \right) \\
& \quad + \eps^{d/2 - 1/p}\| \varphi^\eps \eps\nabla \varphi^\eps\|_{L^{2/\sigma}([0,t],L^2)} 
  \left(\| \theta^\eps\|_{L^p([0,t], L^q)} + \eps \|g^\eps\|_{L^p([0,t], L^q)} \right), \\
\lesssim & \; t^{2/\sigma} \| \theta^\eps \|_{L^1([0,t], L^2)} + \| \eps \nabla L^\eps \|_{L^1([0,t], L^2)} \\
& \quad + \eps^{d/2 - 1/p - d/4} \| \eps \nabla \theta^\eps\|_{L^p([0,t], L^q)} + \eps^{d/2 - 1/p - d/4} \times \eps^{1/2-d/8},
\end{align*}
where we have used \eqref{phiL4}, \eqref{g}, \eqref{3}, \eqref{L2phi}; and where the powers of $\eps$ given 
by the correction term $g^\eps$ are not written as they are better than the powers above. 
Using \eqref{3.10'}, \eqref{3.11'} and \eqref{4}, we now have
\begin{align}
\notag
 \| \eps \nabla \theta^\eps \|_{L^\infty([0,t], L^2)} & \lesssim  \eps^{d/8} 
\times \left(\eps^{-1/p} \| \eps \nabla \theta^\eps\|_{L^1([0,T],L^2)} + \eps^{1/2-d/8}\right) +
\eps^{1/2} \\
\label{5}
& \lesssim \| \eps \nabla \theta^\eps\|_{L^1([0,T],L^2)} + \eps^{1/2}.
\end{align}
Then, using Gronwall lemma, we obtain
$$\| \eps \nabla \theta^\eps (t)\|_{L^2} \lesssim \eps^{1/2}, \quad \forall t \in [0,T].$$
Gagliardo-Nirenberg inequality then implies
\begin{align*}
 \| \theta^\eps(t)\|_{L^4} & \lesssim \eps^{-d/4}\| \theta^\eps(t)\|_{L^2}^{1-d/4}\| \eps \nabla \theta^\eps(t)\|_{L^2}^{d/4}, \\
& \lesssim \eps^{-d/4+1/2} \lesssim \eps^{-d/8}\eps^{1/2-d/8},
\end{align*}
with $1/2-d/8 >0$ because $d \leq 3$. We infer that \eqref{bootstrap} holds for finite time. 
This concludes the bootstrap argument and we infer
$$\sup_{0 \leq t \leq T} 
\left(\|\theta^\eps(t)\|_{L^2} + \|\eps \nabla \theta^\eps(t)\|_{L^2}  \right) \Tend\eps 0 0.$$
Theorem \ref{main} then follows using Proposition \ref{L2} and the relation $\theta^\eps = w^\eps+ \eps g^\eps \chi_-$.
\begin{remark}
 \label{Perturbation2}
If the initial datum is such that :
$$\psi^\eps_0(x)= \eps^{-d/4} e^{i\xi_0^+.
    (x-x_0^+)/\eps}a
\left(\frac{x-x_0^+}{\sqrt\eps}\right) \chi_+(x) + \eta^\eps (x), $$
with $\eta^\eps(x)+ \eps \nabla \eta^\eps(x)  = \mathcal{O}(\eps^{\gamma_0})$ in $L^2$, 
the initial terms $\theta^\eps (0,x)$ presents, a $\mathcal{O}(\eps^{\gamma_0})$ contribution in $H^1_\eps$, 
and we add a term in the estimate \eqref{***}. Therefore, performing the same bootstrap argument \eqref{bootstrap}, 
we obtain for $t \in [0,T]$ :
\begin{equation*}
\| \theta^\eps \|_{L^p([0,t],L^q)}  \lesssim  \eps^{1/2 - d/8} + \eps^{-d/8+\gamma_0},\\
\end{equation*}
and this gives
\begin{equation*}
\| \theta^\eps \|_{L^\infty([0,t],L^2)}  \leq C(T)  \left(\eps^{1/2} + \eps^{\gamma_0}\right).\\
\end{equation*}
The estimate for the derivative writes :
\begin{equation*}
 \| \eps \nabla \theta^\eps \|_{L^p([0,t],L^q)}  \lesssim  \eps^{1/2 - d/8} + \eps^{-d/8+\gamma_0} 
+ \eps^{-d/8} \| \eps \nabla \theta^\eps \|_{L^1([0,t],L^2)},
\end{equation*}
and finally, by Gronwall Lemma, for $t \in [0,T]$ :
\begin{equation*}
 \| \eps \nabla \theta^\eps (t) \|_{L^2}  \leq  C(T)\left(\eps^{1/2} + \eps^{\gamma_0}\right).
\end{equation*}
Gagliardo-Nirenberg inequality then gives
\begin{align*}
 \| \theta^\eps (t)\|_{L^q} \lesssim & \; \eps^{-d/4}\|\theta^\eps (t) \|_{L^2}^{1-d/4}\; \| \eps \nabla \theta^\eps (t) \|_{L^2}^{d/4} \\
\lesssim & \; \eps^{-d/8}\eps^{1/2-d/8}+ \eps^{\gamma_0-d/8},
\end{align*}
with $1/2-d/8>0$ for $d \leq 3$ and $\gamma_0 > d/8$. This implies that the bootstrap argument \eqref{bootstrap} holds for finite time, 
whence the result of Remark \ref{perturbation}.
\end{remark}
Let us notice that the proof of Theorem \ref{main} crucially relies on the result of Proposition \ref{A}, for $k\leq 6$. 
Therefore, in order to deal with large times, we need to have the exponential control \eqref{B}.

\subsection{Large time case}
\label{largetimecase}
Our aim is now to prove Theorem \ref{loglog} and to find that the approximation holds until 
$T^\eps=C \log \log (\eps^{-1})$, for some suitable $C>0$. 
\\
\noindent
We assume that we have the exponential control \eqref{B} for $k \leq 6$, which gives us the following 
estimate on $\varphi^\eps(t)$:
$$\| \varphi^\eps (t)\|_{L^q(\R^d)} \lesssim \eps^{-d/8}e^{C't}. $$
Therefore, we make the following bootstrap assumption on $\theta^{\eps}(t)$:
$$\|\theta^\eps(t)\|_{L^q(\R^d)}\lesssim \eps^{-d/8}e^{C't}, \quad t \in [0,T], $$
where $C'$ denotes the same constant as above.
By Theorem \ref{main}, for any $T>0$ independent of $\eps$, the bootstrap assumption is satisfied, provided $\eps \in ]0;\eps_T]$. 
\\
\noindent
We recall the estimate of the proof of Theorem \ref{main}, with $I=[t, t+\tau]$, $t\geq 0, \tau>0$, $s \in I$ before the absorption argument :
\begin{align*}
\| \theta^\eps \|_{L^p(I, L^q)}  \lesssim & \; \eps^{-1/p}\| \theta^\eps (s)\|_{L^2} + \eps^{- 1/p} \| L^\eps \|_{L^1(I, L^2)} 
+ \eps^{d/2 - 2/p} \left(\|\varphi^\eps \|_{L^\sigma(I, L^q)}^2 \right. \\
& \left. + \|\theta^\eps \|_{L^\sigma(I, L^q)}^2  + \eps^2 \|g^\eps \|_{L^\sigma(I, L^q)}^2 \right)
\left( \|\theta^\eps \|_{L^p(I, L^q)} + \eps \|g^\eps \|_{L^p(I, L^q)} \right). 
\end{align*}
To simplify notations, we assume $\tau \leq 1$. Then we use the new estimates on $\varphi^\eps(t)$ and as long as the bootstrap argument 
holds, choosing a larger $C'$ if necessary, and by integration in $s$, we have :
\begin{align*}
\| \theta^\eps \|_{L^p(I, L^q)}& 
\leq K \left(\eps^{-1/p}\tau^{-1}\| \theta^\eps\|_{L^1(I,L^2)} + \eps^{- 1/p} \| L^\eps \|_{L^1(I, L^2)} \right. \\
& \quad + \left. \tau^{2/\sigma}e^{2C't}\| \theta^\eps \|_{L^p(I, L^q)}+ \eps^{1-d/4} \tau^{2/\sigma+1/p}e^{C't} \right),
\end{align*}
where $K$ is a constant independent of $\eps$. 
We want to apply the absorption argument, and to have the term $K \tau^{2/\sigma}e^{2C't} \| \theta^\eps \|_{L^p(I, L^q)}$ be absorbed 
by the left hand side. We first notice that for $t\leq A \log(\eps^{-1})$, 
$$K\tau^{2/\sigma}e^{2C't} \leq K\tau^{2/\sigma}\eps^{-2AC'}.$$
We choose $\tau >0$ such that
$$K \tau^{2/\sigma}\eps^{-2AC'} \leq \dfrac{1}{2}, $$
which implies that $\tau$ and $\tau^{-1}$ are bounded by constants independent of $t$. 
We then obtain
\begin{equation*}
\| \theta^\eps \|_{L^p(I, L^q)}
\lesssim \eps^{-1/p}\| \theta^\eps\|_{L^1(I,L^2)} + \eps^{- 1/p} \| L^\eps \|_{L^1(I, L^2)} 
+ \eps^{1-d/4}e^{C't}.
\end{equation*}
We recover $[0,t]$ with a finite number of intervals of the form $[j\tau,(j+1)\tau]$ and we obtain for $t \leq A \log (\eps^{-1})$ :
\begin{equation}
\label{a}
 \| \theta^\eps \|_{L^p([0,t], L^q)} \lesssim 
\eps^{-1/p}\| \theta^\eps\|_{L^1([0,t],L^2)} + \eps^{- 1/p} \| L^\eps \|_{L^1([0,t], L^2)}
 + \eps^{1-d/4} e^{C't}.
\end{equation}
Then, thanks to Strichartz estimates again, we have
\begin{align*}
 \| \theta^\eps \|_{L^\infty([0,t], L^2)} & \lesssim \| L^\eps\|_{L^1([0,t],L^2)} + \eps^{- 1/p}\| NL^\eps\|_{L^{p'}([0,t],L^{q'})},\\
& \lesssim \| L^\eps\|_{L^1([0,t],L^2)} 
+ \eps^{d/2 - 1/p} \left(\|\varphi^\eps \|_{L^\sigma([0,t], L^q)}^2 + \|\theta^\eps \|_{L^\sigma([0,t], L^q)}^2 \right. \\
& \quad \left.   + \eps^2 \|g^\eps \|_{L^\sigma([0,t], L^q)}^2 \right)
\left( \|\theta^\eps \|_{L^p([0,t], L^q)} + \eps \|g^\eps \|_{L^p([0,t], L^q)} \right) \\
& \lesssim \| L^\eps\|_{L^1([0,t],L^2)} + \eps^{1/p} t^{2/\sigma} e^{Ct} 
\left(\eps^{-1/p}\| \theta^\eps\|_{L^1([0,t],L^2)} \right. \\
& \quad \left. + \eps^{- 1/p} \| L^\eps \|_{L^1([0,t], L^2)}
 + \eps^{1-d/4} e^{Ct}  \right) \\
 & \lesssim \; \eps^{1/2}e^{Ct} + \| \theta^\eps\|_{L^1([0,t],L^2)} e^{Ct}.
\end{align*}
Finally, Gronwall lemma yields :
\begin{equation}
\label{b}
\| \theta^\eps (t) \|_{L^2} \lesssim \eps^{1/2}e^{e^{Ct}}, \quad \forall t \geq 0.
\end{equation}
From \eqref{a} and \eqref{b}, we infer the following estimate :
\begin{equation*}
 \| \theta^\eps \|_{L^p([0,t], L^q)} \lesssim \eps^{1/2-d/8} e^{e^{Ct}}.
\end{equation*}
We have to check how long the bootstrap argument holds; using the same method for $\eps \nabla \theta^\eps$, we obtain
\begin{align*}
 \| \eps \nabla \theta^\eps \|_{L^p(I, L^q)} & \lesssim \; \eps^{-1/p} \| \eps \nabla \theta^\eps (t)\|_{L^2} 
+ \eps^{-1/p} \| \eps \nabla V \theta^\eps \|_{L^1(I, L^2)} \\
& \quad + \eps^{-1/p} \| \eps \nabla L^\eps \|_{L^1(I, L^2)} + \eps^{-2/p} \| \eps \nabla NL^\eps \|_{L^{p'}(I, L^{q'})}.
\end{align*}
The nonlinear term writes
\\
$\eps^{-2/p} \| \eps \nabla NL^\eps \|_{L^{p'}(I, L^{q'})} $
\begin{align*}
& \lesssim \; \eps^{d/2 - 2/p} \left( \|\varphi^\eps \|_{L^\sigma(I,L^q)}^2 + \| \theta^\eps \|_{L^\sigma(I,L^q)}^2 + \eps^2 \|g^\eps \|_{L^\sigma(I,L^q)}^2 \right)\\
& \qquad \left( \| \eps \nabla \theta^\eps\|_{L^p(I, L^q)} + \eps^2 \| \nabla (g^\eps \chi_-)\|_{L^p(I, L^q)} \right) \\
& \qquad + \eps^{d/2 - 2/p}\| \varphi^\eps \; \eps\nabla \varphi^\eps\|_{L^{\sigma/2}(I,L^2)} 
\left(\| \theta^\eps\|_{L^p(I, L^q)} + \eps \|g^\eps\|_{L^p(I, L^q)} \right), \\
& \lesssim  \tau^{2/\sigma} e^{Ct} \| \eps \nabla \theta^\eps\|_{L^p(I, L^q)} + \tau^{2/\sigma} \eps^{1/2-d/8} e^{e^{Ct}},
\end{align*}
and we have for all $s \in I$ :
\begin{align*}
 \| \eps \nabla \theta^\eps \|_{L^p(I, L^q)}  \\
\lesssim & \;\eps^{-1/p} \| \eps \nabla \theta^\eps (s)\|_{L^2} 
+ \eps^{-1/p} \| \eps \nabla V \theta^\eps \|_{L^1(I, L^2)}+ \eps^{-1/p} \| \eps \nabla L^\eps \|_{L^1(I, L^2)} \\
& \quad  + \tau^{2/\sigma} e^{2Ct} \| \eps \nabla \theta^\eps\|_{L^p(I, L^q)} 
+ \tau^{2/\sigma} \eps^{1/2-d/8} e^{e^{Ct}} \\
\leq & K \left( \eps^{-1/p} \| \eps \nabla \theta^\eps (s)\|_{L^2} 
+ \tau^{2/\sigma} e^{Ct} \| \eps \nabla \theta^\eps\|_{L^p(I, L^q)}+ \tau^{2/\sigma} \eps^{1/2-d/8} e^{e^{Ct}}\right)
\end{align*}
Integrating on $I$, we have
\begin{multline*}
 \| \eps \nabla \theta^\eps \|_{L^p(I, L^q)} \leq  K \left( \eps^{-1/p} \tau^{-1}\| \eps \nabla \theta^\eps\|_{L^1(I,L^2)}\right. \\
\left. + \tau^{2/\sigma} e^{Ct} \| \eps \nabla \theta^\eps\|_{L^p(I, L^q)}+ \tau^{2/\sigma} \eps^{1/2-d/8} e^{e^{Ct}}\right).
\end{multline*}
With $t\leq A \log(\eps^{-1})$, we choose $\tau$ such that 
$$K\tau^{2/\sigma} \eps^{-2AC}\leq \dfrac{1}{2},$$
and repeating the same procedure, we obtain
\begin{align*}
 \| \eps \nabla \theta^\eps \|_{L^p([0,t], L^q)} &\lesssim \eps^{-1/p} \| \eps \nabla \theta^\eps\|_{L^1([0,t],L^2)} 
+ \eps^{1/2-d/8} e^{e^{Ct}}.
\end{align*}
Then, using Strichartz estimates again, we find
\begin{equation*}
 \| \eps \nabla \theta^\eps \|_{L^\infty([0,t], L^2)} \lesssim  \;e^{Ct}\| \eps \nabla \theta^\eps \|_{L^1([0,t], L^2)} +
\eps^{1/2} e^{e^{Ct}},
\end{equation*}
and Gronwall lemma yields
\begin{equation*}
 \| \eps \nabla \theta^\eps (t) \|_{L^2} \lesssim \eps^{1/2} e^{e^{Ct}}, \quad \forall t \geq 0.
\end{equation*}
It remains to check that the bootstrap argument holds for $t \leq c\log \log(\eps^{-1})$, for $c$ sufficiently small. 
We use the weighted Gagliardo-Nirenberg inequality to have :
\begin{align*}
\| \theta^\eps (t) \|_{L^q} & \lesssim \eps^{-d/4} \| \theta^\eps (t) \|_{L^2}^{1-d/4} \| \eps \nabla \theta^\eps (t) \|_{L^2}^{d/4}, \\
& \leq \widetilde{K} \eps^{-d/4} \eps^{1/2}e^{e^{\tilde{C}t}}.
\end{align*}
Therefore, taking $\eps$ sufficiently small, the bootstrap argument holds as long as
$$\widetilde{K} \eps^{1/2}\eps^{-d/8} e^{e^{\tilde{C}t}} \leq e^{C't}, $$
with $1/2 - d/8 >0$. We check that for large $t$ and $\eps$ sufficiently small, it remains true for 
$t \leq c \log \log(\eps^{-1})$, 
with $c$ independent of $\eps$. The proof of Theorem \ref{loglog} is now complete.
\begin{remark}
In order to deal with initial data which are perturbation of wave packets, as in Remark \ref{perturbation}, but in large time case, 
we have to check how long the bootstrap argument holds with new terms appearing from $\eta^\eps$. We have (using the estimates 
obtained in Remark \ref{Perturbation2}) :
\begin{align*}
 \| \theta^\eps (t) \|_{L^2} \lesssim \left(\eps^{1/2}+ \eps^{\gamma_0}\right)e^{e^{Ct}} \\
\| \eps \nabla \theta^\eps (t) \|_{L^2} \lesssim \left(\eps^{1/2}+ \eps^{\gamma_0}\right)e^{e^{Ct}},
\end{align*}
for all $t \geq0$, with $\gamma_0 > d/8$. Using Gagliardo-Nirenberg inequality, we find
\begin{align*}
 \| \theta^\eps (t) \|_{L^4} \lesssim & \; \eps^{-d/4} \| \theta^\eps (t) \|_{L^2}^{1-d/4} \| \eps \nabla \theta^\eps (t) \|_{L^2}^{d/4} \\
\lesssim & \; \left( \eps^{1/2-d/8}+\eps^{\gamma_0-d/8}\right) \eps^{-d/8}e^{e^{Ct}}.
\end{align*}
The bootstrap argument holds as long as
$$ \left( \eps^{1/2-d/8}+\eps^{\gamma_0-d/8}\right) \eps^{-d/8}e^{e^{Ct}} \ll \eps^{-d/8}e^{Ct},$$
and since $\gamma_0 >d/8$, the above condition is ensured for $t \leq C \log \log (\eps^{-1})$ for some suitable $C$, 
and this gives the approximation for large times.
\end{remark}

\section{Growth of Sobolev norms and momenta of the profile}
\label{sec:growth}
In this section, we will focus on the behaviour of $u(t)$ for large time and prove Proposition \ref{growth}, which gives an additional case 
where Theorem \ref{loglog} holds.
\smallbreak
\noindent
We first recall some results that follow from \cite{Fujiwara} (see Theorem 2.2 and Corollary 2.7 of this paper) 
and \cite{KT}. 
We consider $Q^+$ satisfying 
$$\sup_{t \in \R} |Q^+(t)| < +\infty,$$ 
then, the following (local in time) Strichartz estimates hold.
\begin{proposition}[Strichartz estimates for quadratic potentials]
 \label{StrichartzProfil}
Let $(p,q)$, $(p_1,q_1)$ be admissible pairs, defined in Definition \ref{def:adm}. 
Let $I$ be a finite time interval. We consider $u$, the solution to
\begin{equation*}
  i \d_t u + \dfrac{1}{2} \Delta u - \dfrac{1}{2} \left\langle Q^+(t)x;x \right\rangle u = f \quad ; \quad u(0,x)=u_0(x).
\end{equation*}
where $u_0 \in L^2(\R^d)$ and $f \in L^{p_1'}(I,L^{q_1'})$.
Then, there exists $C=C(q, q_1,|I|)$, such that for all $s \in I$ 
$$\|u\|_{L^p(I,L^q)} \leq C \|u_0 \|_{L^2} + \|f\|_{L^{p_1'}(I,L^{q_1'})}=C \|u(s) \|_{L^2} + \|f\|_{L^{p_1'}(I,L^{q_1'})}.$$
\end{proposition}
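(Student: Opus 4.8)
The plan is the one standard for (sub)quadratic Hamiltonians: derive the estimate first on a short interval from the fixed-time dispersive bound on the propagator, then patch short intervals together to cover an arbitrary finite $I$. Write $H(t)=-\frac{1}{2}\Delta+\frac{1}{2}\langle Q^+(t)x;x\rangle$. Since $Q^+(t)$ is a real symmetric matrix and $\sup_{t\in\R}|Q^+(t)|<+\infty$, each $H(t)$ is self-adjoint on $L^2(\R^d)$, and the non-autonomous Cauchy problem admits a strongly continuous family of unitary propagators $\{U(t,s)\}$ with $U(t,\sigma)U(\sigma,s)=U(t,s)$, $U(s,s)=\mathrm{Id}$ (classical for quadratic Hamiltonians, and contained in \cite{Fujiwara}); the solution of the inhomogeneous problem is then given by Duhamel's formula $u(t)=U(t,s)u(s)-i\int_s^t U(t,\sigma)f(\sigma)\,d\sigma$.

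First I would invoke \cite{Fujiwara}: because the potential is, uniformly in $t$, at most quadratic, there exists $\delta_0>0$ depending only on $\sup_{t}|Q^+(t)|$ such that for $0<|t-s|\le\delta_0$ the operator $U(t,s)$ has an integral kernel bounded by $C|t-s|^{-d/2}$ with $C$ uniform, i.e. $\|U(t,s)\|_{L^1\to L^\infty}\le C|t-s|^{-d/2}$. Together with the unitarity $\|U(t,s)\|_{L^2\to L^2}=1$, this is exactly the pair of hypotheses of the abstract Strichartz estimates of \cite{KT}, with $L^2$ as energy space and dispersion rate $d/2$. Fix an interval $J$ with $|J|\le\delta_0$ and let $s_0=\inf J$; applying \cite{KT} to the one-parameter family $t\mapsto U(t,s_0)$, $t\in J$ — on which the dispersive bound holds throughout, since $|t-\sigma|\le\delta_0$ for all $t,\sigma\in J$ — and combining the homogeneous estimate with the retarded estimate via Duhamel's formula yields, for all admissible $(p,q)$ and $(p_1,q_1)$,
\[
\|u\|_{L^p(J,L^q)}\le C(q)\,\|u(s_0)\|_{L^2}+C(q,q_1)\,\|f\|_{L^{p_1'}(J,L^{q_1'})} .
\]
Choosing $(p,q)=(\infty,2)$ (an admissible pair) gives in addition $\|u\|_{L^\infty(J,L^2)}$ bounded by the same right-hand side.

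To reach a general finite interval, take $I=[0,T]$ without loss of generality and split it into $N=\lceil T/\delta_0\rceil$ consecutive subintervals $I_j=[t_j,t_{j+1}]$ of length $\le\delta_0$. On each $I_j$ the function $u$ solves the same equation with Cauchy datum $u(t_j)$, so the short-interval estimate bounds both $\|u\|_{L^p(I_j,L^q)}$ and $\|u(t_{j+1})\|_{L^2}$ by $C\|u(t_j)\|_{L^2}+C\|f\|_{L^{p_1'}(I_j,L^{q_1'})}$. Iterating the $L^2$ inequality in $j$ controls each $\|u(t_j)\|_{L^2}$ by $C^{N}\bigl(\|u(0)\|_{L^2}+\|f\|_{L^{p_1'}(I,L^{q_1'})}\bigr)$, the finitely many pieces $\|f\|_{L^{p_1'}(I_j,L^{q_1'})}$ being recombined by Hölder's inequality in time; summing the resulting bounds for $\|u\|_{L^p(I_j,L^q)}$ over $j$ then gives $\|u\|_{L^p(I,L^q)}\le C(q,q_1,|I|)\bigl(\|u(0)\|_{L^2}+\|f\|_{L^{p_1'}(I,L^{q_1'})}\bigr)$. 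Finally, running the same iteration forward and backward from an arbitrary $s\in I$ shows that $s$ may replace $0$ up to enlarging the constant, and when $f\equiv0$ conservation of the $L^2$ norm makes $\|u(s)\|_{L^2}=\|u_0\|_{L^2}$, which is the form stated.

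The only substantial ingredient is the short-time dispersive estimate for $U(t,s)$, which is exactly Fujiwara's construction of the fundamental solution for quadratic time-dependent potentials; the point requiring care is the uniformity of $\delta_0$ and of the kernel bound in terms of $\sup_t|Q^+(t)|$, not any new analysis. Everything afterwards — the passage through \cite{KT} and the finite covering of $I$ — is routine, at the cost of a constant growing (exponentially) with $|I|$, which is harmless since the statement is local in time.
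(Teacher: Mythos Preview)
Your argument is correct and is precisely the route the paper has in mind: the proposition is not proved in the paper but only stated as a consequence of \cite{Fujiwara} (short-time dispersive bound for the propagator of a time-dependent subquadratic Hamiltonian) together with the abstract Strichartz machinery of \cite{KT}, and you have spelled out exactly that derivation, including the covering of a finite interval by short pieces and the iteration on the $L^2$ datum. There is nothing to add on the mathematical side; the only cosmetic point is that the equality in the displayed conclusion of the proposition is a slight abuse of notation (it means the inequality holds with either $\|u_0\|_{L^2}$ or $\|u(s)\|_{L^2}$, not that these quantities coincide when $f\neq 0$), and your justification via the forward/backward iteration from an arbitrary $s$ is the correct reading.
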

\noindent
We prove the following lemma, since it is the first step of the proof of Proposition~\ref{growth}.
\begin{lemma}
 \label{Q+}
Let $d=2$ or $3$. Assume $\Lambda \geq 0$ and :
\begin{equation*}
 \left|\dfrac{d}{dt}Q^+(t)\right| \leq \dfrac{C}{(1+|t|)^{\kappa_0+1}},
\end{equation*}
with $\kappa_0 >2$. We consider $u$, the solution to the Cauchy problem \eqref{profil}. 
Then, there exist $C_1, C_2>0$ and $\gamma >0$ such that
\begin{equation*}
 \|\nabla u(t) \|_{L^2} \leq C_1, \quad \|xu(t) \|_{L^2} \leq C_2 (1+|t|)^{\gamma+1}.
\end{equation*}
Moreover 
$$\|\nabla u\|_{L^p([0,t],L^q)} + \|x u\|_{L^p([0,t],L^q)} \lesssim (1+|t|)^{\gamma+2}. $$
\end{lemma}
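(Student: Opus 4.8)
The plan is to run an energy--moment bootstrap and then upgrade the resulting $L^\infty$-in-time $L^2$ bounds to space--time norms by means of Proposition \ref{StrichartzProfil}. Testing \eqref{profil} against $\bar u$ and taking imaginary parts gives conservation of mass, $\|u(t)\|_{L^2}=\|a\|_{L^2}$. Next I introduce the time-dependent energy
$$E(t):=\frac12\|\nabla u(t)\|_{L^2}^2+\frac12\int_{\R^d}\langle Q^+(t)y;y\rangle\,|u(t,y)|^2\,dy+\frac{\Lambda}{2}\|u(t)\|_{L^4}^4,$$
which, differentiated along \eqref{profil} (all the terms in which $Q^+$ is frozen cancel), satisfies $\frac{d}{dt}E(t)=\tfrac12\int_{\R^d}\langle \dot Q^+(t)y;y\rangle|u(t,y)|^2\,dy$, whence $|\dot E(t)|\le C(1+|t|)^{-\kappa_0-1}\|yu(t)\|_{L^2}^2$. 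For the momentum, a direct computation using \eqref{profil} gives $\frac{d}{dt}\|yu(t)\|_{L^2}^2=2\,\IM\int_{\R^d}\bar u\,(y\cdot\nabla u)\,dy$ (the potential and the cubic terms drop out, being $i$ times real quantities), so that $\big|\tfrac{d}{dt}\|yu(t)\|_{L^2}\big|\le C\|\nabla u(t)\|_{L^2}$. Finally, integrating the hypothesis yields $|Q^+(t)|\lesssim(1+|t|)^{-\kappa_0}$, using that $Q^+$ vanishes at infinity in the geometric situation of interest (cf. Remark \ref{rmq:growth}).

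The coupling of these three facts is the crux of the proof, and the place where $\kappa_0>2$ is used. Running a continuity argument, assume $\|yu(t)\|_{L^2}^2\le A(1+t)^2$ on the maximal interval where this holds. Then $|\dot E(t)|\le CA(1+t)^{1-\kappa_0}$ is integrable since $\kappa_0>2$, hence $\sup_t|E(t)|<\infty$; and from the definition of $E$, using $\Lambda\ge0$,
$$\|\nabla u(t)\|_{L^2}^2\le 2E(t)+|Q^+(t)|\,\|yu(t)\|_{L^2}^2\le 2\sup_s|E(s)|+CA(1+t)^{2-\kappa_0}\lesssim 1,$$
again because $\kappa_0>2$. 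Feeding $\|\nabla u(t)\|_{L^2}\lesssim 1$ back into the momentum inequality gives $\|yu(t)\|_{L^2}\le\|ya\|_{L^2}+Ct$, which closes the bootstrap with room to spare and establishes $\|\nabla u(t)\|_{L^2}\le C_1$ together with $\|yu(t)\|_{L^2}\le C_2(1+|t|)^{\gamma+1}$ for a suitable $\gamma>0$.

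For the space--time bounds, applying $\nabla$ to \eqref{profil} shows that $\nabla u$ solves the same Schr\"odinger equation with the (bounded) quadratic potential $\tfrac12\langle Q^+(t)y;y\rangle$ and source $Q^+(t)y\,u+\Lambda\nabla(|u|^2u)$, while multiplying \eqref{profil} by $y$ shows $yu$ solves it with source $\nabla u+\Lambda y|u|^2u$. In both cases the linear part of the source is already controlled (indeed $\|Q^+(t)y\,u(t)\|_{L^2}\lesssim(1+t)^{1-\kappa_0}$ is integrable) and the cubic part is $H^1$-subcritical for $d\le 3$, hence handled by H\"older together with mass conservation and the $\|\nabla u\|_{L^\infty([0,t],L^2)}$ bound just obtained. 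Splitting $[0,t]$ into $O(t)$ subintervals of unit length, applying the local estimates of Proposition \ref{StrichartzProfil} on each, and summing, upgrades the $L^\infty L^2$ control to $\|\nabla u\|_{L^p([0,t],L^q)}+\|yu\|_{L^p([0,t],L^q)}\lesssim(1+|t|)^{\gamma+2}$, the additional power of $(1+t)$ coming from the number of subintervals. The main obstacle is precisely the mutual dependence of $\|\nabla u\|_{L^2}$ and $\|yu\|_{L^2}$: a crude Gronwall argument between them only yields exponential growth, and it is the surplus decay $\kappa_0-2>0$ of $\dot Q^+$ (and of $Q^+$) that keeps the energy bounded and thereby prevents $\|\nabla u\|_{L^2}$ from growing at all.
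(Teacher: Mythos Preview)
Your approach differs from the paper's: instead of a direct bootstrap on $\|yu(t)\|_{L^2}$, the paper combines the energy $E(t)$ and the variance $V(t)=\tfrac12\|yu(t)\|_{L^2}^2$ into a single Lyapunov functional $A(t)=E(t)+\tilde C(1+|t|)^{-2-\delta}V(t)$, shows $A'(t)\le C(1+|t|)^{-1-\tilde\delta}A(t)$, and concludes by Gronwall. Both arguments rely on the same differential identities and on the (implicit) extra hypothesis $|Q^+(t)|\lesssim(1+|t|)^{-\kappa_0}$, which you were right to make explicit---the paper needs it too, for the coercivity of $A(t)$. The Strichartz part of your argument matches the paper's.

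The gap in your proposal is in the closure of the continuity argument. Under the bootstrap hypothesis $\|yu(t)\|_{L^2}^2\le A(1+t)^2$, the bound you obtain on the energy is $\sup_s|E(s)|\le |E(0)|+C_1A$ (with $C_1$ depending on the constant in the $\dot Q^+$ estimate and on $\kappa_0$), and then $\|\nabla u(t)\|_{L^2}^2\le 2|E(0)|+(2C_1+C_2)A$; feeding this back gives $\|yu(t)\|_{L^2}\le \|ya\|_{L^2}+t\sqrt{2|E(0)|+(2C_1+C_2)A}$. For this to improve on the bootstrap assumption as $t\to\infty$ you need $2C_1+C_2<1$, which is a smallness condition on the constants governing $Q^+$ that is nowhere assumed. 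This is exactly the circularity you diagnose in your last sentence, and your bootstrap does not escape it. The paper's Lyapunov--Gronwall scheme sidesteps the issue because Gronwall yields $A(t)\le A(0)\exp(C/\tilde\delta)$ regardless of the size of the constants. Your approach can be salvaged by first invoking the finite-time bounds of Proposition~\ref{A} on $[0,T_0]$ and then running the bootstrap on $[T_0,\infty)$, where the tail integrals $\int_{T_0}^\infty(1+s)^{1-\kappa_0}ds$ and $\sup_{t\ge T_0}|Q^+(t)|(1+t)^2$ are small; but this step is missing from what you wrote.
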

\begin{proof}
We use an energy argument and set 
$$E(t) = \dfrac{1}{2} \| \nabla u(t)\|_{L^2}^2 + \dfrac{\Lambda}{4} \| u(t) \|_{L^4}^4 + \dfrac{1}{2} \int_{\R^d} 
\left\langle Q^+(t)x,x \right\rangle |u(t,x)|^2 dx, $$
and
$$V(t) = \dfrac{1}{2} \int_{\R^d} |x|^2 |u(t,x)|^2 dx.$$
We have :
$$E'(t) = \dfrac{1}{2} \int_{\R^d} \left\langle \dfrac{d}{dt}Q^+(t)x;x \right\rangle |u(t,x)|^2 dx, $$
$$V'(t) = 2 \; Im \; \int_{\R^d} \left(x.\nabla u(t,x) \right)\overline{u}(t,x) dx.$$
We introduce the following quantity :
$$A(t) = E(t) + \dfrac{\widetilde{C}}{(1+|t|)^{2+\delta}} V(t), $$
with $\delta>0$. For $\widetilde{C}$ large enough and choosing $\delta \ll 1$ such that $2 <2+\delta < \kappa_0$, we obtain
$$A(t) \geq  \dfrac{1}{2} \| \nabla u(t)\|_{L^2}^2 + \dfrac{1}{4} \| u(t) \|_{L^4}^4 + \dfrac{1}{(1+|t|)^{2+\delta}} V(t).$$
Then, using all these estimates, we have :
\begin{equation*}
E'(t) \lesssim \dfrac{1}{(1+|t|)^{\kappa_0+1}} V(t)^{1/2} \quad ; \quad V(t)  \leq \left( 1+ |t|\right)^{-2-\delta} A(t),
\end{equation*}
and combining these estimates, we obtain
\begin{align*}
 A'(t) & = E'(t) + \dfrac{\widetilde{C}}{(1+|t|)^{2+\delta}}V'(t) - \dfrac{\widetilde{C} (2+\delta)}{(1+|t|)^{3+\delta}} V(t), \\
& \leq E'(t) + \dfrac{\widetilde{C}}{(1+|t|)^{2+\delta}}V'(t), \\
& \leq \left(\dfrac{C}{(1+|t|)^{\kappa_0-1-\delta}} + \dfrac{C}{(1+|t|)^{1+\delta/2}}\right) A(t), \\
& \leq \dfrac{C}{(1+|t|)^{1+\widetilde{\delta}}} A(t),
\end{align*}
for some $\widetilde{\delta} >0$. By Gronwall Lemma, we find $|A(t)| \leq C$, whence
\begin{equation}
 \label{H1}
\|\nabla u(t) \|_{L^2} \leq C, \quad \|x u(t) \|_{L^2} \leq C (1+|t|)^{1+\widetilde{\gamma}},
\end{equation}
where $\widetilde{\gamma}>0$. And since $\| u(t) \|_{L^4}^4 \leq A(t)$, we deduce
$$\|u(t)\|_{L^4} \leq C. $$
\\
Besides, the derivative and the first momentum of $u$ satisfy the following equation:
\begin{equation*}
  i \d_t (\nabla u) + \dfrac{1}{2} \Delta (\nabla u) - \dfrac{1}{2} \left\langle Q^+(t)x;x \right\rangle (\nabla u) =
Q^+(t)x \; u +\Lambda \nabla \left(|u|^{2}u \right),
\end{equation*}
and
\begin{equation*}
 i \d_t (xu) + \dfrac{1}{2} \Delta (xu) - \dfrac{1}{2} \left\langle Q^+(t)x;x \right\rangle (xu) =
\nabla u + \Lambda |u|^{2}\left( xu \right).
\end{equation*}
We set $I=[t,t+\tau]$, with $t\geq 0$, $\tau >0$, and $(p,q)$ any admissible pair. 
We recall that $(8/d,4)$ is the admissible pair introduced in the previous proofs, and $\sigma = 8/(4-d)$, 
which will be useful for the absorbtion argument. 
We have, thanks to Strichartz estimates given in Proposition \ref{StrichartzProfil} :
\\
$\quad\| \nabla u\|_{\left(L^p(I,L^q)\right)\cap\left(L^{8/d}(I,L^4) \right)} $
\begin{align*}
\lesssim & \; \| \nabla u(t) \|_{L^2} +
\| Q^+(t)xu\|_{L^1(I,L^2)}+ \|\nabla \left(|u|^2 u\right)\|_{L^{8/(8-d)}(I,L^{4/3})} \\
\lesssim & \; \| \nabla u(t) \|_{L^2} + \|xu\|_{L^1(I,L^2)} + \| u\|_{L^\sigma(I,L^4)}^2 \|\nabla u\|_{L^{8/d}(I,L^4)} \\
\lesssim & \;  \| \nabla u(t) \|_{L^2} + \|xu\|_{L^1(I,L^2)} + \tau^{2/\sigma}\| u\|_{L^\infty(I,H^1)}^2 \|\nabla u\|_{L^{8/d}(I,L^4)},
\end{align*}
where we have used H\"older inequality and the Sobolev embedding (with $d \leq 4$). Then we have by \eqref{H1}
\begin{equation}
\label{u'}
\| \nabla u\|_{\left(L^p(I,L^q)\right)\cap\left(L^{8/d}(I,L^4) \right)} 
\lesssim \| \nabla u(t) \|_{L^2} + \|xu\|_{L^1(I,L^2)} + \tau^{2/\sigma}\|\nabla u\|_{L^{8/d}(I,L^4)}.
\end{equation}
Similarly, we have :
\begin{align}
\notag
  \| x u\|_{\left(L^p(I,L^q)\right)\cap\left(L^{8/d}(I,L^4) \right)}   \lesssim & \| xu(t) \|_{L^2} +
\| \nabla u\|_{L^1(I,L^2)}+ \||u|^2 \left(xu\right)\|_{L^{8/(8-d)}(I,L^{4/3})} \\
\label{xu}
 \lesssim    \| xu(t) \|_{L^2}  & + \| \nabla u\|_{L^1(I,L^2)} + \tau^{2/\sigma}\| u\|_{L^\infty(I,H^1)}^2 \|x u\|_{L^{8/d}(I,L^4)}.
\end{align}
Combining \eqref{u'} and \eqref{xu}, and choosing $\tau$ sufficiently small, we obtain
\\
$ \quad \| \nabla u\|_{\left(L^p(I,L^q)\right)\cap\left(L^{8/d}(I,L^4) \right)} + 
 \| x u\|_{\left(L^p(I,L^q)\right)\cap\left(L^{8/d}(I,L^4) \right)}$
\begin{align*}
\qquad \qquad &\lesssim \; \| \nabla u(t) \|_{L^2} + \| xu(t) \|_{L^2} + \| \nabla u\|_{L^1(I,L^2)} + \|xu\|_{L^1(I,L^2)} \\
\qquad \qquad &\lesssim \; (1+(t+\tau)^{\gamma+2}),
\end{align*}
with $\gamma>0$, where we have used \eqref{H1}. 
\\
\noindent
Using the above estimate for $t = 0, \tau, 2\tau, \cdots , j \tau$ for $j \in \N$, and
by induction on $j$, we finally obtain the following estimate on $[0,t]$, for all $t \geq 0$ :
\begin{equation}
\label{H1bis}
\| \nabla u\|_{\left(L^p([0,t],L^q)\right)\cap\left(L^{8/d}([0,t],L^4) \right)} 
 +  \| x u\|_{\left(L^p([0,t],L^q)\right)\cap\left(L^{8/d}([0,t],L^4) \right)}
 \lesssim (1+t)^{\gamma+2},
\end{equation}
for any admissible pair; and the proof of the lemma is complete.
\end{proof}
\noindent
We now prove Proposition \ref{growth}.
\begin{proof}[Proof of Proposition \ref{growth}]
We argue by induction. The case $k=1$ is given by Lemma \ref{Q+}. 
Assume now that the result holds for $k-1$; we will prove it for $k$. 
The first important point is to notice that it suffices to analyse the derivatives and momenta of order $k$. 
In fact, the following inequality holds :
\begin{align*}
 \sum_{|\alpha|+|\beta| \leq k} \|x^\alpha \d_x^\beta u \|_{L^q} \leq C 
\left[ \|(1+|x|)^{k}u \|_{L^q} + \sum_{|\alpha|\leq k} \|\d_x^\alpha u\|_{L^q} \right].
\end{align*}
It is an easy consequence of Theorem 5 of \cite{Triebel}.
\\
\noindent
For all $\alpha \in \N^d$ such that $|\alpha| = k$, $u$ satisfies the following equations :
\begin{multline*}
 i \d_t (\d_x^\alpha u) + \dfrac{1}{2} \Delta (\d_x^\alpha u) - \dfrac{1}{2} \left\langle Q^+(t)x;x \right\rangle (\d_x^\alpha u) = \\
\dfrac{1}{2} \left[ \d_x^\alpha, \left\langle Q^+(t)x;x \right\rangle  \right]u  +\Lambda \d_x^\alpha \left(|u|^{2}u \right),
\end{multline*}
and 
\begin{equation*}
 i \d_t (x^\alpha u) + \dfrac{1}{2} \Delta (x^\alpha u) - \dfrac{1}{2} \left\langle Q^+(t)x;x \right\rangle (x^\alpha u) =
\dfrac{1}{2} \left[ \Delta, x^\alpha \right] u + \Lambda |u|^{2}\left( x^\alpha u \right).
\end{equation*}
For conveniance, we distinguish cases $d=2$ and $d=3$.
\smallbreak
\noindent
\textbf{Case $d=2$ : } Here, the usual admissible pair, used to deal with the nonlinearity, is $(8/d,4)=(4,4)$, and $\sigma = 8/(4-d)=4$. 
Strichartz estimates on $I=[t, t+\tau]$, for $t\geq 0$ and $\tau>0$ then yield :
\begin{align*}
 \| \d_x^\alpha u\|_{\left(L^\infty(I,L^2)\right)\cap\left(L^4(I,L^4)\right)}  \lesssim & \; \| \d_x^\alpha u(t) \|_{L^2} 
+ \| \d_x^\alpha (|u|^2u) \|_{L^{4/3}(I,L^{4/3})} \\
& \quad + \|\left[ \d_x^\alpha, \left\langle Q^+(t)x;x \right\rangle  \right]u \|_{L^1(I,L^2)}, \\
\| x^\alpha u\|_{\left(L^\infty(I,L^2)\right)\cap\left(L^4(I,L^4)\right)}  \lesssim & \| x^\alpha u(t) \|_{L^2} + 
\| |u|^2(x^\alpha u) \|_{L^{4/3}(I,L^{4/3})} \\
& \quad + \|\left[ \Delta, x^\alpha \right]u \|_{L^1(I,L^2)}.
\end{align*}
We write 
$$\left[ \d_x^\alpha, \left\langle Q^+(t)x;x \right\rangle  \right]u = \sum_{|\beta| = |\alpha|-1} \left(c_\beta (t)x \right) \d_x^\beta u
+ \sum_{|\gamma|=|\alpha|-2} d_\gamma(t) \d_x^\gamma u ,$$
where $c_\beta$ and $d_\gamma$ are bounded for all $t \in \R$. 
We first notice that the derivative of the nonlinearity satisfy :
\begin{equation*}
 \left| \d_x^\alpha (|u|^2u) \right| \lesssim |u|^2 |\d_x^\alpha u| + \sum_{j \leq J} \left| w_{j1}\right|\; \left| w_{j2}\right| \; 
\left| w_{j3}\right|,
\end{equation*}
where $J \in \N$, $w_{jl}$ are derivatives of $u$ or $\overline{u}$ of order lower than $k-1$, rearranged such that 
such that $w_{j1}$ is of order lower than $w_{j2}$, which is of order lower than $w_{j3}$. 
Then, thanks to H\"older inequality, and Sobolev embedding, we have
\begin{align*}
 \| \d_x^\alpha u\|_{\left(L^\infty(I,L^2)\right)\cap\left(L^4(I,L^4)\right)}  \lesssim & \; \| \d_x^\alpha u(t) \|_{L^2}
+ \tau^{2/\sigma} \|u\|_{L^{\infty}(I,H^1)}^2 \| \d_x^\alpha u \|_{L^{4}(I,L^{4})} \\
& \quad + \sum_{j \leq J}\| w_{j1}\|_{L^{4}(I,L^{4})} \|w_{j2} \|_{L^{4}(I,L^{4})} \|w_{j3}\|_{L^{4}(I,L^{4})} \\
& \quad + \sum_{|\beta|=|\alpha|-1} \|x \d_x^\beta u\|_{L^1(I,L^2)}+ \tau \mathcal{F}_{k-1}, \\
\| x^\alpha u\|_{\left(L^\infty(I,L^2)\right)\cap\left(L^p(I,L^q)\right)}  \lesssim & \; \| x^\alpha u(t) \|_{L^2} + 
\tau^{2/\sigma} \|u\|_{L^{\infty}(I,H^1)}^2 \|x^\alpha u \|_{L^{4}(I,L^{4})} + \tau \mathcal{F}_{k-1},
\end{align*}
where $\mathcal{F}_{k-1}$ is the sum of $L^\infty(I,L^2)-$norm of terms of order lower than $k-1$ 
(it contains terms from $\sum_{|\gamma|=|\alpha|-2} d_\gamma(t) \; \d_x^\gamma u$ and $\left[ \Delta, x^\alpha \right]u$ ). 
We have, thanks to the induction hypothesis
\begin{align*}
\sum_{j \leq J}\| w_{j1}\|_{L^{4}(I,L^{4})} \|w_{j2}\|_{L^{4}(I,L^{4})} \| w_{j3}\|_{L^{4}(I,L^{4})} \lesssim e^{Ct}.
\end{align*}
Choosing $\tau \ll 1$, sufficiently small, the nonlinear term can be absorbed by the left handside term, 
and using \eqref{H1} and the induction hypothesis :
\begin{align}
\label{exp1}
 \| \d_x^\alpha u\|_{\left(L^\infty(I,L^2)\right)\cap\left(L^4(I,L^4)\right)}  \lesssim & \; \| \d_x^\alpha u(t) \|_{L^2}
+ \sum_{|\beta|=|\alpha|-1} \|x \d_x^\beta u\|_{L^1(I,L^2)} + e^{C(t+\tau)}, \\
\notag
\| x^\alpha u\|_{\left(L^\infty(I,L^2)\right)\cap\left(L^4(I,L^4)\right)}  \lesssim & \; \| x^\alpha u(t) \|_{L^2}
+  e^{C(t+\tau)}.
\end{align}
Writing $$A_k(t) = \max_{0\leq s\leq t} \sum_{|\alpha|+|\beta| \leq k} \|x^{\alpha} \d_x^{\beta}u(t)\|_{L^2},$$
we obtain the following inequality :
$$A_k(t+\tau) \leq C \; A_k(t) + Ce^{C(t+\tau)},$$
with $\tau \ll 1$.
Set $u_n = A_k(n\tau)$, for $n \in \N$. Then : $u_{n+1} \leq Cu_n + Ce^{C(n+1)\tau}$. With $v_n = e^{-Kn\tau} u_n$, this inequality
allows us to find, by induction on $n$, that $v_n$ is bounded for all $n \in \N$. We deduce that $u_n$ grows exponentially, and that 
$A_k$ grows in the same way. 
\\
\noindent
To prove the property for any admissible pair $(p,q)$, we go back to \eqref{exp1} and use the previous estimate, found for $(\infty,2)$:
\begin{align*}
 \| \d_x^\alpha u\|_{\left(L^p(I,L^q)\right)\cap\left(L^4(I,L^4)\right)}  \lesssim & \; \sup_{s \in I}\| \d_x^\alpha u(s) \|_{L^2}
+ \sum_{|\beta|=|\alpha|-1} \|x \d_x^\beta u\|_{L^1(I,L^2)} + e^{C(t+\tau)}, \\
\lesssim & \; e^{C(t+\tau)}\\
\| x^\alpha u\|_{\left(L^p(I,L^q)\right)\cap\left(L^4(I,L^4)\right)}  \lesssim & \; \sup_{s \in I}| x^\alpha u(s) \|_{L^2}
+  e^{C(t+\tau)}\\
 \lesssim & \; e^{C(t+\tau)},
\end{align*}
and we recover $[0,t]$ to obtain the property for the case $d=2$.
\smallbreak
\noindent
\textbf{Case $d=3$ : }The main difference comes from the products of
 derivatives of $u$. Since $d = 3$, we have $\sigma >p$ 
and the H\"older inequality used above fails in this case. 
Therefore, to deal with these terms, we choose a different admissible pair: $(2,6)$. It is important to notice that Strichartz estimates are 
available for this endpoint (see \cite{KT} for details). The notations will be the same as for case $d=2$.
\\
\noindent
Strichartz estimates on $I=[t, t+\tau]$, for $t\geq 0$ and $\tau>0$ yield :
\begin{multline*}
 \| \d_x^\alpha u\|_{\left(L^\infty(I,L^2)\right)\cap\left(L^{8/3}(I,L^4)\right)}  \lesssim  \; \| \d_x^\alpha u(t) \|_{L^2} 
+ \| |u|^2 \d_x^\alpha u \|_{L^{8/5}(I,L^{4/3})}\\
+ \|\left[ \d_x^\alpha, \left\langle Q^+(t)x;x \right\rangle  \right]u \|_{L^1(I,L^2)} 
+ \sum_{j \leq J}\left\| w_{j1}\; w_{j2} \; w_{j3}\right\|_{L^{2}(I,L^{6/5})},
\end{multline*}
\begin{multline*}
\| x^\alpha u\|_{\left(L^\infty(I,L^2)\right)\cap\left(L^{8/3}(I,L^4)\right)}  \lesssim  \| x^\alpha u(t) \|_{L^2} + 
\| |u|^2(x^\alpha u) \|_{L^{8/5}(I,L^{4/3})} \\ + \|\left[ \Delta, x^\alpha \right]u \|_{L^1(I,L^2)}.
\end{multline*}
We recall that 
$$\left[ \d_x^\alpha, \left\langle Q^+(t)x;x \right\rangle  \right]u = \sum_{|\beta| = |\alpha|-1} \left(c_\beta (t).x \right) \d_x^\beta u
+ \sum_{|\gamma|=|\alpha|-2} d_\gamma(t) \d_x^\gamma u ,$$
where $c_\beta$ and $d_\gamma$ are bounded for all $t \in \R$. Then, we can write, thanks to H\"older inequality :
\begin{align*}
 \left\| w_{j1}\; w_{j2} \; w_{j3}\right\|_{L^2(I,L^{6/5})} & \lesssim 
\| w_{j1} \|_{L^\infty(I,L^6)} \; \| w_{j2} \|_{L^\infty(I,L^2)} \; \| w_{j3} \|_{L^2(I,L^6)} \\
& \lesssim \| w_{j1} \|_{L^\infty(I,H^1)}\; \| w_{j2} \|_{L^\infty(I,L^2)}\; \| w_{j3} \|_{L^2(I,L^6)} \\
& \lesssim e^{C(t+\tau)},
\end{align*}
where we have used the Sobolev embedding $\left(H^1 \hookrightarrow L^6 \right)$, and the induction hypothesis. 
We now infer, with the same arguments, assuming $\tau \leq 1$ :
\begin{align*}
 \| \d_x^\alpha u\|_{\left(L^\infty(I,L^2)\right)\cap\left(L^{8/3}(I,L^4)\right)}  \lesssim & \; \| \d_x^\alpha u(t) \|_{L^2} 
+\| u \|_{L^{8}(I,L^{4})}^2 \|\d_x^\alpha u \|_{L^{8/3}(I,L^{4})} \\
& \quad + \sum_{|\beta|=|\alpha|-1} \|x \d_x^\beta u\|_{L^1(I,L^2)}+ \tau \mathcal{F}_{k-1} 
+ e^{C(t+\tau)} , \\
\lesssim & \| \d_x^\alpha u(t) \|_{L^2} + \tau^{2/\sigma}\|\d_x^\alpha u \|_{L^{8/3}(I,L^{4})} \\
& \quad + e^{C(t+\tau)}, \\
\| x^\alpha u\|_{\left(L^\infty(I,L^2)\right)\cap\left(L^{8/3}(I,L^4)\right)}  \lesssim & \| x^\alpha u(t) \|_{L^2} + 
\| u \|_{L^{8}(I,L^{4})}^2 \|x^\alpha u \|_{L^{8/3}(I,L^{4})}+\tau \mathcal{F}_{k-1},\\
\lesssim &  \| x^\alpha u(t) \|_{L^2} + \tau^{2/\sigma}\|x^\alpha u \|_{L^{8/3}(I,L^{4})} + e^{C(t+\tau)}.
\end{align*}
We choose $\tau$ sufficiently small to absorb the nonlinear term and finally obtain
\begin{align*}
 \| \d_x^\alpha u\|_{\left(L^\infty(I,L^2)\right)\cap\left(L^{8/3}(I,L^4)\right)} 
\lesssim & \| \d_x^\alpha u(t) \|_{L^2} + e^{C(t+\tau)}, \\
\| x^\alpha u\|_{\left(L^\infty(I,L^2)\right)\cap\left(L^{8/3}(I,L^4)\right)} 
\lesssim &  \| x^\alpha u(t) \|_{L^2} + e^{C(t+\tau)}.
\end{align*}
Then, we use the same procedure as in the case $d=2$ and the proof is complete for $d=3$.
\end{proof}
Let us now sketch the proof of the property enunciated in Remark \ref{rmq:growth}. 
We first assume that 
$$E_0 > \lambda_\infty \quad \textrm{and that} \quad|x^+(t)| \Tend t \infty \infty.$$ 
Note that the eigenvalue has the same decreasing rate than the potential $V$, given by the long range property.
\\
\noindent
On one hand, we have $$\dfrac{d^2}{dt^2}|x^+(t)|^2 = 2 \left(|\dot{x}^+(t)|^2 - 2x^+(t).\nabla \lambda_+(x^+(t)) \right), $$
which gives
$$\lim\limits_{t \rightarrow \infty} \dfrac{d^2}{dt^2}|x^+(t)|^2 = 4 (E_0 - \lambda_\infty)>0, $$
thanks to the assumptions. We infer that for $t$ sufficiently large, $x^+(t)$ satisfies :
$$\dfrac{d}{dt}|x^+(t)|^2 \geq ct, \quad \textrm{for a small positive constant}\; c. $$ 
We finally obtain $$|x^+(t)|^2 \geq ct^2.$$
On the other hand, the derivative of $Q^+$ is given by
$$\dfrac{d}{dt}Q^+(t) =\dot{x}^+(t).\nabla\left({\rm Hess} \, \, \lambda_+ (x^+(t))\right). $$ 
We deduce from the conservation of the energy 
(and \eqref{trajestimate}) that $|\dot{x}^+(t)|$ is bounded. Besides, we have
$$\nabla\left({\rm Hess} \, \, \lambda_+ (x^+(t))\right) \leq c' \left\langle x^+(t) \right\rangle^{-p-3}. $$
Combining both previous estimates, we finally obtain the property \eqref{assumptionQ}.
\smallbreak
\noindent
Let us remark that the assumption
$$\lim\limits_{t \rightarrow \infty} |x^+(t)| = + \infty,$$ 
is not sufficient to prove that $Q^+$ satisfies \eqref{assumptionQ}, the assumption on the energy is essential. But if $E_0$ is such that 
$$E_0 > \lambda_\infty + \dfrac{1}{2} \sup_{x\in \R^d} (x.\nabla \lambda_+(x)),$$
then $\lim\limits_{t \rightarrow \infty} |x^+(t)| = + \infty$ and $Q^+$ satisfies \eqref{assumptionQ}.

\section{About the one dimensional case}
\label{d1}
\noindent
In this section, we assume $d=1$; we will prove Theorem \ref{1D}. 
\\
\noindent
We first notice that we are in the $L^2-$subcritical case. We consider a $2 \times 2$ system. 
The authors of \cite{Adiab} consider a matrix-valued potential which is \textit{at most quadratic} : $\rho$ and $\rho_0$ 
are at most quadratic and $\om$ is bounded as well as its derivatives. With these assumptions on the potential, the approximation is 
verified, up to a time $t^\eps = C \log \log(\eps^{-1})$. Under our assumptions on the potential $V$, 
it is possible to improve on this time $t^\eps$. 
Let us first notice that, because of the absence of crossing points, thanks to the resolvent estimates of \cite{Jecko3}, \cite{Jecko2}, 
we obtain Strichartz estimates, similar to the one in Theorem \ref{StrichartzEstimates} in the case $d=1$, 
following the same steps as in \cite{FerRou}. In fact, in \cite{FerRou}, the authors consider a matrix-valued potential 
with crossing points, and they assume $d = 2,3$ to avoid difficulties brought by these crossing points.
\\
\noindent
To obtain the approximation for large time, we follow the same steps as in the proof of Theorem \ref{loglog}, in Section \ref{largetimecase}.
The difference comes from the estimate on $\varphi^\eps$ and the bootstrap argument. We recall the estimate on $u$, 
proved in \cite{Ca-p} on  $I=[t, t+\tau]$, for the Lebesgue exponents $p=8, \; q=4$ and $\sigma = 8/3$:
\begin{equation*}
 \|u\|_{L^8(I,L^4)} \leq K \|u_0\|_{L^2} \lesssim 1.
\end{equation*}
This gives for $\varphi^\eps$, where $t \in I$ :
\begin{equation*}
 \| \varphi^\eps(t) \|_{L^4} = \eps^{-1/8} \| u(t) \|_{L^4},
\end{equation*}
and then, thanks to $p>\sigma$, we have 
\begin{align*}
\| \varphi^\eps \|_{L^{8/3}(I,L^4)} & \lesssim \tau^{1/4} \| \varphi^\eps \|_{L^8(I,L^4)}  \\
& \lesssim \tau^{3/8} \| \varphi^\eps \|_{L^{\infty}(I,L^4)} \\
& \lesssim \eps^{-1/8} \tau^{3/8} \| u \|_{L^\infty(I,L^4)} \lesssim \eps^{-1/8}.
\end{align*}
Consider $t\leq t^\eps$, with $t^\eps = A \log(\eps^{-1})$ where $A$ will be adjusted at the end. We perform a bootstrap argument. Assume :
\begin{equation}
 \label{bootstrap bis}
\|\theta^\eps (t)\|_{L^4} \lesssim \eps^{-1/8}.
\end{equation}
Then,
\begin{align*}
\| \theta^\eps \|_{L^8(I, L^4)}  \leq & K \left( \eps^{-1/8}\tau^{-1
} \| \theta^\eps\|_{L^1(I,L^2)} + \eps^{- 1/8} \| L^\eps \|_{L^1(I, L^2)}\right. \\
& \; \left.+ \eps^{1/4} \tau^{2/\sigma}\left(\eps^{-1/4}  + \eps^{3/2}e^{Ct} \right)
\left( \|\theta^\eps \|_{L^8(I, L^4)} + \eps^{3/4}e^{Ct} \right)\right).  
\end{align*}
We choose $A$ such that $\eps^{7/4}e^{Ct} <1$ for $t\leq A \log(\eps^{-1})$, (which gives $A< 7/(4C)$), 
and then, $\tau>0 $ such that $$2K \tau^{2/\sigma} \leq \dfrac{1}{2};$$
$\tau$ and $\tau^{-1}$ are both bounded by a constant independent of $t$, which gives 
\begin{equation}
\label{theta1}
 \| \theta^\eps \|_{L^8(I, L^4)}  \lesssim \eps^{-1/8} \| \theta^\eps\|_{L^1(I,L^2)} + \eps^{- 1/8} \| L^\eps \|_{L^1(I, L^2)} 
+\eps^{3/4}e^{Ct}.
\end{equation}
Using Strichartz estimates and H\"older inequality again, we have, for $t \geq 0$, $\tau >0$, fixed small enough :
\begin{align*}
\| \theta^\eps \|_{L^\infty(I, L^2)}  \lesssim & \; \| \theta^\eps(t) \|_{L^2} +\| L^\eps \|_{L^1(I, L^2)} + \eps^{3/8} 
\left(\|\varphi^\eps \|_{L^{8/3}(I, L^4)}^2 + \|\theta^\eps \|_{L^{8/3}(I, L^4)}^2 \right. \\
& \; \; \left.   + \eps^2 \|g^\eps \|_{L^{8/3}(I, L^4)}^2 \right)
\left( \|\theta^\eps \|_{L^8(I, L^4)} + \eps \|g^\eps \|_{L^8(I, L^4)} \right)\\
& \lesssim \; \| \theta^\eps(t) \|_{L^2} + \| L^\eps \|_{L^1(I, L^2)} + \eps^{1/8}\left(\tau^{3/4} + \eps^{5/4}e^{Ct} \right) \\
& \; \; \times \left( \eps^{-1/8} \| \theta^\eps\|_{L^1(I,L^2)} + \eps^{- 1/8} \| L^\eps \|_{L^1(I, L^2)} +\eps^{3/4}e^{Ct} \right),\\
& \lesssim \; \| \theta^\eps(t) \|_{L^2} + \sqrt{\eps}e^{Ct}+ \left(1+\eps^{7/4}e^{Ct} \right)\| \theta^\eps\|_{L^1(I,L^2)},
\end{align*}
where we have used the previous estimates about each term, and where we have used $\tau \leq 1$ since it is fixed small. 
Then, for $t \geq0$, we can write $\displaystyle{[0,t] \subset \bigcup_{j=0}^{N} [j\tau, (j+1)\tau]}$, for some integer $N$; and we obtain
\begin{equation*}
 \| \theta^\eps \|_{L^\infty([0,t], L^2)} \lesssim \| \theta^\eps(0) \|_{L^2} + 
\sqrt{\eps}e^{Ct}+ \left(1+\eps^{7/4}e^{Ct} \right)\| \theta^\eps\|_{L^1([0,t],L^2)}.
\end{equation*}
Since $t\leq t^\eps$, we have $\eps^{7/4}e^{Ct} < 1$ which gives
\begin{equation*}
 \| \theta^\eps \|_{L^\infty([0,t], L^2)}  \lesssim \; \| \theta^\eps(0) \|_{L^2} + 
\sqrt{\eps}e^{Ct}+ \| \theta^\eps\|_{L^1([0,t],L^2)},
\end{equation*}
Applying Gronwall Lemma on $\|\theta^\eps (t)\|_{L^2}$, we deduce :
\begin{equation}
\label{theta2}
 \forall t \in[0,t^\eps], \quad \|\theta^\eps (t)\|_{L^2} \leq C_0 \sqrt{\eps} e^{C_1t},
\end{equation}
where the constants are independent of $\eps$. 
Besides, combining \eqref{theta1} and \eqref{theta2}, we obtain 
\begin{equation}
 \label{theta3}
 \forall t \in[0,t^\eps], \quad \| \theta^\eps \|_{L^8(I, L^4)}  \lesssim \eps^{3/8}e^{Ct}.
\end{equation}
The proof is then completed by checking how long the boostrap assumption \eqref{bootstrap bis} holds. We differentiate the equation satisfied 
by $\theta^\eps$ and arguing as before, with Strichartz estimates we obtain :
\begin{align*}
\| \eps \nabla \theta^\eps \|_{L^8(I, L^4)}  \lesssim & \; \eps^{-1/8} \| \eps \nabla \theta^\eps (t)\|_{L^2} 
+ \eps^{- 1/8} \| \nabla V \theta^\eps \|_{L^1(I, L^2)} \\
&\quad  + \eps^{- 1/8} \| \eps \nabla L^\eps \|_{L^1(I, L^2)} 
+ \eps^{- 1/4}\| \eps \nabla NL^\eps \|_{L^{8/7}(I, L^{4/3})}.
\end{align*}
We recall the following estimate on the nonlinearity, obtained thanks to H\"older inequality :
\begin{align*}
 \eps^{-1/4}\| \eps \nabla NL^\eps \|_{L^{8/7}(I, L^{4/3})} \\
\lesssim & \; \eps^{1/4} 
\left( \|\varphi^\eps \|_{L^{8/3}(I,L^4)}^2 + \| \theta^\eps \|_{L^{8/3}(I,L^4)}^2 + \eps^2 \|g^\eps \|_{L^{8/3}(I,L^4)}^2 \right)\\
&  \left( \| \eps \nabla \theta^\eps\|_{L^8(I, L^4)} + \eps^2 \| \nabla (g^\eps \chi_-)\|_{L^8(I, L^4)} \right) \\
&  + \eps^{1/4}\| \varphi^\eps \; \eps\nabla \varphi^\eps\|_{L^{8/6}(I,L^2)}
\left(\| \theta^\eps\|_{L^8(I, L^4)} + \eps \|g^\eps\|_{L^8(I, L^4)} \right) \\
 \lesssim & \; \tau^{3/4} (1+\eps^{7/4}e^{Ct})\| \eps \nabla \theta^\eps\|_{L^8(I, L^4)}  + \tau^{3/4} \eps^{3/8} e^{Ct},
\end{align*}
where we have used the exponential control \eqref{B}, which is true in the case $d=1$, and \eqref{theta3}. 
For $t\leq t^\eps$, arguing as before, by fixing 
$\tau$ very small to absorb the nonlinearity, we obtain
\begin{equation*}
 \| \eps \nabla \theta^\eps \|_{L^8(I, L^4)}  \lesssim \eps^{-1/8} \| \eps \nabla \theta^\eps\|_{L^1(I,L^2)}
 + \eps^{- 1/8} \| \eps \nabla L^\eps \|_{L^1(I, L^2)} + e^{Ct}\eps^{3/8}.
\end{equation*}
Using Strichartz estimates again on $I$, we find
\begin{align*}
 \| \eps \nabla \theta^\eps \|_{L^\infty(I, L^2)} \lesssim &  \; \| \eps \nabla \theta^\eps (t)\|_{L^2} 
+ \| \nabla V \theta^\eps \|_{L^1(I, L^2)}+ \| \eps \nabla L^\eps \|_{L^1(I, L^2)} \\
& \quad + \eps^{- 1/8}\| \eps \nabla NL^\eps \|_{L^{8/7}(I, L^{4/3})}, \\
 \lesssim & \; \eps^{1/8} \left(\tau^{3/4} (1+\eps^{7/4}e^{Ct})\| \eps \nabla \theta^\eps\|_{L^8(I, L^4)}
+ e^{Ct}\eps^{3/8} \right) \\
& \quad + \| \eps \nabla \theta^\eps (t)\|_{L^2} + \eps^{1/2}e^{Ct} \\
 \lesssim & \| \eps \nabla \theta^\eps (t)\|_{L^2} +\tau^{3/4} (1+\eps^{7/4}e^{Ct})\| \eps \nabla \theta^\eps\|_{L^1(I,L^2)}  \\
& \quad  + \| \eps \nabla L^\eps \|_{L^1(I, L^2)} + e^{Ct}\eps^{7/4} + e^{Ct}\eps^{1/2} \\
 \lesssim & \| \eps \nabla \theta^\eps (t)\|_{L^2} + \eps^{1/2}e^{Ct}+ (1+\eps^{7/4}e^{Ct})\| \eps \nabla \theta^\eps\|_{L^1(I,L^2)}
\end{align*}
We recover $[0,t]$ with a finite number of intervals of the form $[j\tau,(j+1)\tau]$ and obtain
\begin{align*}
\| \eps \nabla \theta^\eps \|_{L^\infty([0,t], L^2)} & \lesssim \;  \eps^{1/2}e^{Ct}
+ (1+\eps^{7/4}e^{Ct})\| \eps \nabla \theta^\eps\|_{L^1(I,L^2)} \\
& \lesssim \; \eps^{1/2}e^{Ct} + \| \eps \nabla \theta^\eps\|_{L^1(I,L^2)},                                                                               
\end{align*}
with $t\leq t^\eps$ such that $\eps^{7/4}e^{Ct} < 1$.
Thanks to Gronwall lemma, we finally find  :
\begin{equation}
\label{theta4}
 \forall t \in [0,t^\eps], \quad \| \eps \nabla \theta^\eps \|_{L^2} \leq C'_0 \sqrt{\eps} e^{C'_1t}.
\end{equation}
Thanks to Gagliardo-Nirenberg inequality, we now write
\begin{align*}
 \| \theta^\eps (t)\|_{L^4}\lesssim & \; \eps^{-1/4} \| \theta^\eps (t)\|_{L^2}^{3/4}\|\eps \nabla \theta^\eps (t)\|_{L^2}^{1/4} \\
\lesssim & \; \eps^{-1/4}\sqrt{\eps} e^{Ct},
\end{align*}
where we have used \eqref{theta2} and \eqref{theta4}. We infer that the bootstrap argument \eqref{bootstrap bis} holds, at least, when 
$t\leq c t^\eps$ since we have
$$\forall t \in [0,t^\eps], \quad \eps^{-1/4}\sqrt{\eps} e^{Ct} \ll \eps^{-1/8},$$
with $t^\eps \leq c \log (\eps^{-1})$ for a suitable $c$, and this concludes the proof.
\section{Nonlinear superposition}
\label{NLsuperposition}
\subsection{General considerations}
Theorems \ref{superposition1} and \ref{superposition2} follow by the same meth\-ods as in Theorem \ref{main}. 
The main difference comes from the nonlinearity: nonlinear interaction terms appear. In this section, we will give the method for a 
nonlinear superposition of two data polarized along different modes. 
The procedure applied in this subsection is exactly the same if we consider same 
eigenspaces. We set
$$ w^\eps = \psi^\eps- \varphi^\eps_+\chi_+ - \varphi^\eps_-\chi_- +\eps g^\eps, $$
where $g^\eps$ is the sum of two correction terms, similar to the one defined in Section~\ref{strategy}:
$$g^\eps = g^\eps_+ \chi_+ + g^\eps_- \chi_-,$$ 
where the function $g^\eps_+$ solves the scalar Schr\"odinger equation
\begin{equation*}
i \eps \d_t g^\eps_+ + \dfrac{\eps^2}{2} \Delta g^\eps_+ - \lambda_+(x) g^\eps_+ = \varphi^\eps_- r_+ \quad ; \quad g^\eps_+(0,x)=0;
\end{equation*}
and the function $g^\eps_-$ solves
\begin{equation*}
i \eps \d_t g^\eps_- + \dfrac{\eps^2}{2} \Delta g^\eps_- - \lambda_-(x) g^\eps_- = \varphi^\eps_+ r_- \quad ; \quad g^\eps_-(0,x)=0,
\end{equation*}
where 
$$r_+ (t,x) = -i \left\langle d\chi_+(x)\xi^+(t), \chi_-(x) \right\rangle \quad ; 
\quad r_- (t,x) = -i \left\langle d\chi_-(x)\xi^-(t), \chi_+(x) \right\rangle.$$
The function $w^\eps(t)$ then solves
\begin{equation*}
 i \eps \d_t w^\eps + \dfrac{\eps^2}{2} \Delta w^\eps - V(x) w^\eps = \eps NL^\eps + \eps L^\eps \quad ; \quad w^\eps(0,x) = 0,
\end{equation*}
with
\begin{multline*}
 NL^\eps = \eps^{d/2} \left( | w^\eps + \varphi^\eps_+ \chi_+ + \varphi^\eps_- \chi_- + \eps g^\eps|^2 
\left( w^\eps + \varphi^\eps_+ \chi_+ + \varphi^\eps_- \chi_- + \eps g^\eps \right)\right. \\
 \left. - |\varphi^\eps_+|^2 \varphi^\eps_+ \chi_+ - |\varphi^\eps_-|^2 \varphi^\eps_- \chi_- \right).
\end{multline*}
and
$$L^\eps = \mathcal{O} \left( \sqrt{\eps}e^{Ct} \right) + \left[\dfrac{\eps^2}{2} \Delta, \chi_+ \right]g^\eps_+ +
 \left[\dfrac{\eps^2}{2} \Delta, \chi_- \right]g^\eps_-   = \mathcal{O} \left( \sqrt{\eps}e^{Ct} \right),$$
where the estimate holds in $H_\eps^1$, using Proposition \ref{L2}.
To deal with the nonlinearity, we add and subtract the term 
$$\eps^{d/2} |\varphi^\eps_+ \chi_+ + \varphi^\eps_- \chi_-|^2 \left( \varphi^\eps_+ \chi_+ + \varphi^\eps_- \chi_-\right),$$
and obtain
$$NL^\eps = N_I^\eps + N_S^\eps,$$
where we have 
$$N_I^\eps = |\varphi_+^\eps|^2 \varphi^\eps_- \chi_- + |\varphi^\eps_-|^2 \varphi^\eps_+ \chi_+, $$
and the following pointwise estimates
\begin{align*}
 |N_I^\eps| &\lesssim \eps^{d/2} \left( |\varphi^\eps_+|^2 |\varphi^\eps_-| + |\varphi^\eps_-|^2 |\varphi^\eps_+|\right), \\
 |N_S^\eps| &\lesssim \eps^{d/2} \left( |\varphi^\eps_+|^2 + |\varphi^\eps_-|^2 + |w^\eps|^2 + \eps^2 |g^\eps|^2 \right) 
\left( |w^\eps| + \eps |g^\eps| \right).   
\end{align*}
The procedure to estimate the term $N_S^\eps$ is exactly the same used to deal with the nonlinearity in Section \ref{last}.
The only point remaining concerns the analysis of $\int_0^t \|N_I^\eps (s)\|_{H^1_\eps} ds$.
We have
\begin{equation*}
 \eps^{d/2} \| (\varphi^\eps_+)^2 \varphi^\eps_-\|_{L^2(\R^d)} = 
\left\|\left( u_+\left( t,y- \dfrac{x^+(t)-x^-(t)}{\sqrt{\eps}}\right)\right)^2  u_-(t,y) \right\|_{L^2(\R^d_y)}
\end{equation*}
and the term $|\varphi^\eps_-|^2 |\varphi^\eps_+|$ is handled in the same way, as their contribution play the same role.
We leave out the other terms which are needed in view of a $H^1_\eps$ estimate, since they create no trouble.
\\
\noindent 
The estimation of $N_I^\eps$ is given by the following lemma. 
The proof is based on the strategy of \cite{Ca-p} with adaptation required by the fact that we are in the case $d>1$.
\begin{lemma}
 \label{Mk}
Let $T >0$, $0<\gamma<1/2$ and
\begin{equation}
  \label{eq:ieps}
  I^\eps(T)=\{t\in[0,T],\;\;|x^+(t)-x^-(t)|\le \eps^\gamma\}.
\end{equation}
Then, for all integer $k$, $k > d/2$, there exists a constant $C=C(k)$ such that
$$\int_0^T \|N_I^\eps(t) \|_{H^1_\eps} dt \lesssim (M_{k+2}(T))^3 \left( T \eps^{k(1/2-\gamma)}+ |I^\eps(T)| \right),$$
where $M_k(T) = \max \left(M_k^+(T), M_k^-(T) \right)$, with
$$M_k^\pm (T) = \sup \left\lbrace \|\left\langle x \right\rangle^\alpha \d_x^\beta u_\pm \|_{(L^\infty[0,T];L^2(\R^d))}; 
\quad |\alpha|+|\beta|\leq k \right\rbrace.$$
\end{lemma}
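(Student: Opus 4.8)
The plan is to estimate the interaction term $N_I^\eps$ by exploiting the fact that the two wave packets $\varphi_+^\eps$ and $\varphi_-^\eps$ are concentrated, at scale $\sqrt\eps$, around the two classical trajectories $x^+(t)$ and $x^-(t)$, which separate from one another except on the small ``bad'' time set $I^\eps(T)$. The key point is a separation-of-scales argument: when $|x^+(t)-x^-(t)|$ is large compared with $\sqrt\eps$, the product of a function centered at $x^+(t)/\sqrt\eps$ and one centered at $x^-(t)/\sqrt\eps$ is small in $L^2$ because the supports (up to Schwartz tails) are essentially disjoint.

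First I would reduce the $H_\eps^1$-norm of $N_I^\eps$ to $L^2$-norms of products of the profiles and their derivatives. Since each $\eps^{|\alpha|}\d_x^\alpha$ applied to $\varphi_\pm^\eps$ produces, after the change of variables $y=(x-x^\pm(t))/\sqrt\eps$, an $\eps^{|\alpha|/2}$ gain times derivatives of $u_\pm$, and since multiplication by the smooth bounded $\chi_\pm$ and its derivatives costs nothing, a term like $\eps^{d/2}(\varphi_+^\eps)^2\varphi_-^\eps$ and all the terms needed for the $H_\eps^1$ estimate are controlled by expressions of the form
$$\left\| \bigl(\langle y\rangle^{a}\d_y^{b} u_+\bigr)\!\left(t, y-\tfrac{x^+(t)-x^-(t)}{\sqrt\eps}\right)\,\langle y\rangle^{c}\d_y^{d'} u_-(t,y)\right\|_{L^2(\R^d_y)},$$
with $a+|b|, c+|d'|$ controlled and the total number of profile factors equal to three, whence the cubic power $(M_{k+2}(T))^3$ with two spare derivatives/weights to spare for the $H^1_\eps$ bookkeeping.

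Next I would estimate this product pointwise in $t$ by splitting into the two regimes. Set $\mu(t)=|x^+(t)-x^-(t)|/\sqrt\eps$. If $t\in I^\eps(T)$, I simply bound the product by $\|u_+(t)\|_{L^\infty}\|u_-(t)\|_{L^2}$-type quantities, i.e.\ by $C(M_{k+2}(T))^3$ (using a Sobolev embedding $H^{k}\hookrightarrow L^\infty$ with $k>d/2$ to handle the $L^\infty$ factor), which after integrating over $I^\eps(T)$ contributes the $|I^\eps(T)|$ term. If $t\notin I^\eps(T)$, then $\mu(t)\ge \eps^{\gamma-1/2}\to\infty$; writing the $L^2$ norm of the product and using that $u_\pm\in\Sch$ so that $|\langle y\rangle^{a}\d_y^{b}u_+(t,y)|\le M_{k+2}(T)\langle y\rangle^{-N}$ for any $N$ after trading in enough of the available weights — more precisely, splitting $\R^d_y$ into $\{|y|\le \mu(t)/2\}$, where the shifted factor $u_+(t,\cdot-\mu(t)\omega)$ is evaluated at points of modulus $\ge\mu(t)/2$ and hence is $O(\langle \mu(t)\rangle^{-N})$, and $\{|y|>\mu(t)/2\}$, where the unshifted factor $u_-(t,y)$ carries the decay $\langle y\rangle^{-N}=O(\langle\mu(t)\rangle^{-N})$ — one gets a bound $C(M_{k+2}(T))^3\langle\mu(t)\rangle^{-k}$ for a suitable rearrangement giving $k$ powers of decay. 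Since $\langle\mu(t)\rangle^{-k}\le \eps^{k(1/2-\gamma)}$ on $[0,T]\setminus I^\eps(T)$, integrating in $t$ over $[0,T]$ yields the $T\eps^{k(1/2-\gamma)}$ term. Adding the two contributions gives the claimed bound.

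The main obstacle I expect is the careful bookkeeping in the separation-of-scales step: one must distribute the available weights $\langle x\rangle^\alpha$ among the shifted and unshifted factors so that each factor retains enough polynomial decay to produce the power $\langle\mu(t)\rangle^{-k}$ while keeping the total order of weights and derivatives at most $k+2$ (hence the appearance of $k+2$ rather than $k$ in $M_{k+2}$), and one must also verify that the $H^1_\eps$ derivatives falling on $N_I^\eps$ — in particular on the cutoffs $\chi_\pm$ and on the exponential phases, the latter producing the $\xi^\pm(t)$ factors which are bounded by \eqref{trajestimate} — do not spoil this count. This is the only genuinely delicate part; the reduction to profile norms and the Sobolev embedding on $I^\eps(T)$ are routine.
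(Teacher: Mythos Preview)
Your proposal is correct and matches the paper's approach: the paper does not give a detailed proof but simply refers to the strategy of \cite{Ca-p} ``with adaptation required by the fact that we are in the case $d>1$'', and your sketch --- change to profile variables so that $\eps^{d/2}\|(\varphi^\eps_+)^2\varphi^\eps_-\|_{L^2}$ becomes $\|(u_+(t,\cdot-\mu(t)\omega))^2 u_-(t,\cdot)\|_{L^2}$, split $[0,T]$ into $I^\eps(T)$ and its complement, use the Sobolev embedding $H^k\hookrightarrow L^\infty$ ($k>d/2$) on the bad set and the separation estimate $\langle\mu(t)\rangle^{-k}\le \eps^{k(1/2-\gamma)}$ on the good set --- is exactly that strategy. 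The only wording slip is the phrase ``for any $N$'' when discussing the pointwise decay of the shifted factor (with only $M_{k+2}$ control one obtains at most $k$ powers of decay, as you then correctly write), but this is cosmetic and does not affect the argument.
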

Our next objective is to evaluate the quantity $|I^\eps(T)|$, for $T>0$. 
\\
\noindent
It has to be noticed that the arguments of \cite{Ca-p} which allow us to deal with large times cannot be generalized to 
higher dimension : they are specific to the one-dimensional case.

\subsection{Nonlinear superposition for data belonging to different modes}
\label{different}
\begin{lemma}
Let $T>0$ and 
$$ \Gamma = \inf_{x \in \R^d} | E^+ - E^- - (\lambda_+(x) - \lambda_- (x))|, $$
and suppose $\Gamma >0$. Then, for $0 < \gamma < 1/2$, 
for $T >0$, independent of $\eps$, we have :
$$ | I^\eps (T) | \lesssim \dfrac{\eps^\gamma}{\Gamma^2},  $$
where $I^\eps (T)$ is defined in \ref{eq:ieps}
\end{lemma}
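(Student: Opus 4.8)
The plan is to use conservation of the energies $E^{\pm}$ to show that whenever the two position trajectories are close their relative velocity is bounded below; this makes $h(t):=|x^+(t)-x^-(t)|^2$ uniformly convex along $I^\eps(T)$, forces every crossing of $x^+$ and $x^-$ to be transversal (hence there are finitely many on $[0,T]$), and lets one bound the contribution of each crossing by an interval of length $\lesssim \eps^\gamma/\Gamma^2$. First I would record the consequence of energy conservation: subtracting $E^{\pm}=\tfrac12|\xi^{\pm}(t)|^2+\lambda_{\pm}(x^{\pm}(t))$ gives
$$\tfrac12\bigl(|\xi^+(t)|^2-|\xi^-(t)|^2\bigr)=E^+-E^--\lambda_+(x^+(t))+\lambda_-(x^-(t)).$$
For $t\in I^\eps(T)$ one has $|x^+(t)-x^-(t)|\le\eps^\gamma$, so writing $\lambda_-(x^-(t))=\lambda_-(x^+(t))+O(\eps^\gamma)$ via the uniform bound \eqref{eigen} on $\nabla\lambda_-$, the right-hand side equals $E^+-E^--(\lambda_+(x^+(t))-\lambda_-(x^+(t)))+O(\eps^\gamma)$, whose first part has modulus $\ge\Gamma$ by definition of $\Gamma$. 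Hence $|\,|\xi^+(t)|^2-|\xi^-(t)|^2\,|\ge 2\Gamma-C\eps^\gamma\ge\Gamma$ on $I^\eps(T)$ once $\eps\le\eps_0(\Gamma)$; combined with $|\xi^{\pm}(t)|\le C_1$ from \eqref{trajestimate} and $|\xi^+-\xi^-|\,|\xi^++\xi^-|\ge|\,|\xi^+|^2-|\xi^-|^2\,|$, this yields $|\xi^+(t)-\xi^-(t)|\ge\Gamma/(2C_1)$ for $t\in I^\eps(T)$.

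Next I would differentiate $h$. From \eqref{traj},
$$\ddot h(t)=2|\xi^+(t)-\xi^-(t)|^2-2\,(x^+(t)-x^-(t))\cdot\bigl(\nabla\lambda_+(x^+(t))-\nabla\lambda_-(x^-(t))\bigr),$$
so on $I^\eps(T)$ the first term is $\ge\Gamma^2/(2C_1^2)$ by the previous step and the second is $O(\eps^\gamma)$ by \eqref{eigen}, giving $\ddot h(t)\ge\mu:=\Gamma^2/(4C_1^2)$ there for $\eps$ small. The same formula at a crossing time $\tau$ (where $x^+(\tau)=x^-(\tau)$) shows $\xi^+(\tau)\neq\xi^-(\tau)$, so $t\mapsto x^+(t)-x^-(t)$ has only transversal, hence isolated and non-accumulating, zeros: finitely many on $[0,T]$, a number $N_0$ depending only on the trajectories and not on $\eps$. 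Fixing $\delta>0$ so small that the $\delta$-neighbourhoods of the crossing times are pairwise disjoint, $\ddot h>0$ on each of them, and $|x^+-x^-|\ge\delta_1>0$ off their union (possible by continuity and compactness of $[0,T]$), one gets for $\eps^\gamma<\delta_1$ that $I^\eps(T)$ lies in this union and, $h$ being strictly convex on each neighbourhood, is a union of at most $N_0$ closed intervals $[a_i,b_i]$.

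Finally, on each $[a_i,b_i]\subset I^\eps(T)$ I would integrate: $\mu(b_i-a_i)\le\int_{a_i}^{b_i}\ddot h(t)\,dt=\dot h(b_i)-\dot h(a_i)$, while $|\dot h(t)|=2|(x^+(t)-x^-(t))\cdot(\xi^+(t)-\xi^-(t))|\le 4C_1\eps^\gamma$ on $I^\eps(T)$; hence $b_i-a_i\le 8C_1\eps^\gamma/\mu$, and summing over the at most $N_0$ components,
$$|I^\eps(T)|\le \frac{8C_1N_0}{\mu}\,\eps^\gamma=\frac{32C_1^3N_0}{\Gamma^2}\,\eps^\gamma\lesssim\frac{\eps^\gamma}{\Gamma^2},$$
for $\eps\le\eps_0$, the range $\eps\in(\eps_0,1]$ being trivial after enlarging the constant. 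The main obstacle is the penultimate step: controlling, uniformly in $\eps$, the number of connected components of $I^\eps(T)$. This is precisely where $\Gamma>0$ is essential — it is what makes the position trajectories cross only transversally, so that their crossings are finitely many and the sublevel set $\{|x^+-x^-|\le\eps^\gamma\}$ localises near them for small $\eps$; the remaining computations are elementary.
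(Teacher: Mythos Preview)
Your proof is correct and follows essentially the same route as the paper: both use energy conservation and $\Gamma>0$ to bound $|\xi^+(t)-\xi^-(t)|$ from below on $I^\eps(T)$, deduce uniform convexity of $h(t)=|x^+(t)-x^-(t)|^2$ there, bound the length of each connected component via the mean value theorem, and bound the number of components independently of $\eps$. The only minor variation is in this last step, where the paper argues by contradiction using Rolle's theorem on a fixed level set $I^{\eps_0}(T)$ before passing to smaller $\eps$ via convexity, while you invoke transversality of the crossings of $x^+$ and $x^-$ directly; either argument works equally well.
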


\begin{proof}
We consider $J^\eps(T)$ a maximal interval, included in $I^\eps (T)$ and $N^\eps(T)$ 
the number of such intervals. 
We have the following estimate :
\begin{equation}
\label{Neps}
  |I^\eps(T)| \leq |J^\eps (T)| \times N^\eps(T).
\end{equation}
Let $z$ be defined by $z(t) = |x^+(t) - x^-(t)|^2$. We first prove that
\begin{equation}
\label{ddot}
 \exists \eps_0>0, \; \exists 0<\delta<1, \; \forall \eps \in ]0,\eps_0], \;  \forall t \in I^\eps (T), \; 
\textrm{we have} \; \ddot{z}(t) \geq \delta \Gamma^2 >0.
\end{equation}
\textbf{Step zero : Proof of \eqref{ddot}.}
\\
\noindent
We have for $t \in J^\eps(T)$ : 
\begin{align}
\notag
& \ddot{z}(t)  =
 2|\xi^+ (t) - \xi^- (t)|^2 - 2(x^+(t)-x^-(t)).\left(\nabla \lambda_+ (x^+(t))- \nabla \lambda_- (x^-(t))\right),\\
\label{ddot1}
& \ddot{z}(t)  \geq 2 |\xi^+ (t) - \xi^- (t)|^2 - C \eps^\gamma.
\end{align}
Besides, we have :
\begin{align*}
 | E^+ - E^- - (\lambda_+ (x^+(t)) - \lambda_- (x^-(t)))| & = 
| E^+ - E^- - (\lambda_+ (x^+(t)) - \lambda_- (x^+(t))) \\
& \quad + \lambda_- (x^+(t))- \lambda_- (x^-(t))| \\
& \geq | E^+ - E^- - (\lambda_+ (x^+(t)) - \lambda_- (x^+(t))| \\
& \quad - |\lambda_- (x^+(t))- \lambda_- (x^-(t))| \\
& \geq \Gamma - C\eps^\gamma,
\end{align*}
since $|\lambda_- (x^+(t))- \lambda_- (x^-(t))| \leq C |x^+(t)- x^-(t)| $. If $C\eps^\gamma \ll 1$, we obtain
\begin{equation}
\label{energies}
| E^+ - E^- - (\lambda_+ (x^+(t)) - \lambda_- (x^-(t)))| \geq \dfrac{\Gamma}{2}.
\end{equation}
Then we write, by the definitions of the energies : 
\begin{equation*} 
| E^+-E^--(\lambda_+(x^+(t))-\lambda_-(x^-(t)))|
 = \dfrac{1}{2} \left\lvert (\xi^+ (t)-\xi^-(t)).(\xi^+(t)+\xi^-(t))\right\rvert,
\end{equation*}
whence by \eqref{trajestimate}:
\begin{equation*}
| E^+-E^--(\lambda_+(x^+(t))-\lambda_-(x^-(t)))|\lesssim |\xi^+ (t)-\xi^-(t)|,
\end{equation*}
and we obtain by \eqref{ddot1} and \eqref{energies}, for $C\eps^\gamma$ such that $C'\Gamma^2-C\eps^\gamma \geq \Gamma^2/2$ : 
\begin{equation*}
 \delta\Gamma^2 \leq \ddot{z}(t), \quad \textrm{with}\;0<\delta<1.
\end{equation*}
\textbf{Step one : Size of $J^\eps(T)$.}
\\
\noindent
Let us now consider $\tau,\tau' \in J^\eps(T)$ and find a lower bound of $|\tau-\tau'|$.
\\
\noindent 
The derivatives of $z$ are given by :
\begin{align*}
\dot{z}(t) & = 2 (x^+(t) - x^-(t)).(\xi^+(t) - \xi^-(t)), \\
\ddot{z}(t) & = 2 |\xi^+ (t) - \xi^- (t)|^2 - 2 (x^+(t)-x^-(t)).\left( \nabla \lambda_+ (x^+(t)) - \nabla \lambda_- (x^-(t))\right) 
\end{align*}
There exists $t^* \in \left] \tau, \tau' \right[$ such that 
\begin{equation}
\label{*}
 |\dot{z}(\tau') - \dot{z}(\tau) | = |\tau' - \tau |\; \ddot{z}(t^*).
\end{equation}
On one hand, we have by \eqref{trajestimate}
\begin{align}
\notag
 |\dot{z}(\tau') - \dot{z}(\tau) | & \leq \; 2 | x^+(\tau') - x^- (\tau')|\;|\xi^+(\tau') - \xi^- (\tau')| \\
\notag
& \quad + 2 | x^+(\tau) - x^- (\tau)|\;|\xi^+(\tau) - \xi^- (\tau)|, \\
\label{**}
& \lesssim  \; |\dot{z}(\tau') - \dot{z}(\tau) | \; \eps^\gamma.
\end{align}
On the other hand, we have \eqref{ddot} for $t^* \in J^\eps(T)$.
Therefore, in view of \eqref{*},\eqref{**} and \eqref{ddot} we infer
$$\Gamma^2 |\tau' - \tau| \lesssim |\tau' - \tau| \ddot{z}(t^*) = |\dot{z}(\tau') - \dot{z}(\tau) | \lesssim \eps^\gamma,$$
whence
\begin{equation*}
 |\tau - \tau' | \lesssim \dfrac{\eps^\gamma}{\Gamma^2}, \quad \textrm{and} \quad |J^\eps(T)| \lesssim \dfrac{\eps^\gamma}{\Gamma^2}.
\end{equation*}
\textbf{Step two : Estimation of $N^\eps(T)$.}
\\
\noindent
The difficulty is to prove that the number of interval $J^\eps(T)$ contained in $I^\eps(T)$ is independent of $\eps$. 
For this reason, we first consider a fixed $\eps$.
\\
\noindent
Set $\eps_0 >0$ a fixed constant, small enough to have :
$$C'\Gamma^2 - C\eps_0 \geq \Gamma^2/2.$$
We consider $I^{\eps_0}(T)$, let us prove that there is a finite number of intervals in this fixed set.
We argue by contradiction, assuming that there is an infinite number of intervals such that $z(t) \leq \eps_0^{2\gamma}$.
\\
\noindent
We consider a sequence $(s_n)_{n\in \N}$, included in $[0,T]$ and such that each term is in a 
connected interval of $I^{\eps_0}(T)$, which does not contain an other term of the sequence. We can assume that the sequence is monotonic 
(let us say strictly increasing to fix ideas). 
By compactness of $[0,T]$, a subsequence of $(s_n)_n$ converges to some $s \in [0,T]$, with 
$$z(s) \leq \eps_0^{2\gamma}.$$
Besides, there exists $(t_n)_{n\in \N}$, such that for all 
$$n \in \N,\;  t_n \in ]s_n, s_{n+1}[ \quad \textrm{with}\quad z(t_n) > \eps_0^{2\gamma}$$
and $$\dfrac{d}{dt} z(t)_{|_{t=t_n}}=0,$$ for all integer $n$. 
Using the same argument of compactness, we infer that this sequence $(t_n)$ converges to $s$, 
with $z(t) \geq \eps_0^{2\gamma}$, we deduce that 
$$z(t) = \eps_0^{2\gamma}, \quad \textrm{and} \quad \dfrac{d}{dt} z(t){|_{t=s}}=0.$$
We have $$\dfrac{d}{dt} z(t){|_{t=t_n}}= \dfrac{d}{dt} z(t){|_{t=t_{n+1}}}= 0,$$ 
for all $n \in \N$; thanks to Rolle's theorem, 
there exists en sequence $(r_n)_{n\in \N}$ such that for all $n \in \N$, $r_n \in ]t_n, t_{n+1}[$ and
$$\dfrac{d^2}{dt^2} z(t){|_{t=r_{n}}}= 0.$$ 
Arguing as before, we infer that $(r_n)$ converges to $s$, with $\dfrac{d^2}{dt^2} z(t){|_{t=s}}= 0$. 
But \eqref{ddot} for $s$ give $\ddot{z} (s)>0$. 
Hence a contradiction with \eqref{ddot}.
\\
\noindent
For $\eps$ sufficiently small, such that $\eps \leq \eps_0$, $J^\eps(T)$ is included in $I^{\eps_0}(T)$. 
Besides $z(t)\geq 0$ and $\ddot{z}(t) \geq \delta \Gamma^2 >0$, which implies that $z$ is a positive and strictly convex function. 
We infer that in each interval of $I^{\eps_0}(T)$, there is exactly one interval where $z$ is very small, 
such that $z(t) \leq \eps^{2 \gamma}$. 
This implies that the number of such intervals of $J^\eps(T)$ is the same as in $I^{\eps_0}(T)$; 
$N^\eps(T)$ is bounded by a constant independent of $\eps$, and this concludes the proof.
\end{proof}

\subsection{Nonlinear superposition of data belonging to same modes}
Let us first notice that $I^\eps(T)$ has to be rewritten :
Let $T >0$, $0<\gamma<1/2$. We set :
\begin{equation*}
I^\eps(T)=\{t\in[0,T],\;\;|x_1(t)-x_2(t)|\le \eps^\gamma\}.
\end{equation*}
Then, to estimate the size of $I^\eps(T)$, we have :
\label{same}
\begin{lemma}
Let $T>0$, independent of $\eps$. Then, there exists $C>0$, such that 
$$|I^\eps(T)| \leq C \eps^\gamma.$$
\end{lemma}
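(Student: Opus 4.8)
The plan is to mimic the proof of the preceding lemma, the energy gap $\Gamma$ being now replaced by the observation that $(x_1(t),\xi_1(t))$ and $(x_2(t),\xi_2(t))$ solve the \emph{same} Hamiltonian system \eqref{traj} from distinct data $(x_1(0),\xi_1(0))\neq(x_2(0),\xi_2(0))$, so by uniqueness for the Cauchy problem \eqref{traj} they never coincide in phase space. I would introduce $z(t)=|x_1(t)-x_2(t)|^2$, which is smooth on $[0,T]$, and using \eqref{traj} compute
\begin{align*}
\dot z(t)&=2\bigl(x_1(t)-x_2(t)\bigr)\cdot\bigl(\xi_1(t)-\xi_2(t)\bigr),\\
\ddot z(t)&=2|\xi_1(t)-\xi_2(t)|^2-2\bigl(x_1(t)-x_2(t)\bigr)\cdot\bigl(\nabla\lambda_+(x_1(t))-\nabla\lambda_+(x_2(t))\bigr).
\end{align*}
By \eqref{eigen}, $\lambda_+$ has bounded second derivatives, so the last term is bounded by $2\|{\rm Hess}\,\lambda_+\|_{\infty}\,z(t)$; the whole argument hinges on bounding $|\xi_1-\xi_2|$ from below on the region where $z$ is small.

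First I would prove a uniform convexity statement: there exist $\eta_0>0$ and $\delta>0$ such that $z(t)\le\eta_0^2\Rightarrow\ddot z(t)\ge\delta$. For each $\eta>0$ the set $\{t\in[0,T]:\,|x_1(t)-x_2(t)|\le\eta\}$ is compact, and on it $\xi_1-\xi_2$ cannot vanish: if $|x_1(t_n)-x_2(t_n)|\to0$ and $|\xi_1(t_n)-\xi_2(t_n)|\to0$ along a sequence, compactness gives a limit $t^\ast$ at which both trajectories agree in phase space, and uniqueness for \eqref{traj} then forces $(x_1,\xi_1)\equiv(x_2,\xi_2)$, contradicting the hypothesis. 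Hence $c(\eta):=\inf\{|\xi_1(t)-\xi_2(t)|:\,|x_1(t)-x_2(t)|\le\eta\}>0$; since $c$ is non-increasing, $c(\eta)\to c(0^+)\in(0,+\infty]$ as $\eta\downarrow0$, so choosing $\eta_0$ small enough that $c(\eta_0)^2>\|{\rm Hess}\,\lambda_+\|_\infty\,\eta_0^2$ yields $\ddot z\ge\delta:=2c(\eta_0)^2-2\|{\rm Hess}\,\lambda_+\|_\infty\,\eta_0^2>0$ on $\{z\le\eta_0^2\}$.

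Then I would conclude exactly as in Step two of the previous proof. Put $K=\{t\in[0,T]:z(t)\le\eta_0^2\}$: it is closed, does not depend on $\eps$, and $\ddot z\ge\delta$ on it. It has finitely many connected components, for otherwise their endpoints would accumulate at some $t^\ast$, and applying Rolle's theorem to $z-\eta_0^2$ and then to $\dot z$ (as in the previous proof) would give $z(t^\ast)=\eta_0^2$ and $\ddot z(t^\ast)=0$, contradicting $\ddot z(t^\ast)\ge\delta$. On each component $[a,b]$ of $K$, $z$ is strictly convex, hence $\{t\in[a,b]:z(t)\le\eps^{2\gamma}\}$ is a subinterval $[p,q]$; expanding $z$ to second order about its minimiser on $[p,q]$ and using $\ddot z\ge\delta$ together with $z\ge0$ gives $q-p\le\sqrt{8/\delta}\,\eps^\gamma$. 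For $\eps$ small enough that $\eps^{2\gamma}\le\eta_0^2$ one has $I^\eps(T)\subset K$, so $|I^\eps(T)|\le N\sqrt{8/\delta}\,\eps^\gamma$ with $N$ the (finite, $\eps$-independent) number of components of $K$; enlarging $C$ if necessary to cover $\eps$ bounded away from $0$ (where $|I^\eps(T)|\le T$ trivially) finishes the proof.

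The one genuinely new point, and the only place the hypothesis $(x_1(0),\xi_1(0))\neq(x_2(0),\xi_2(0))$ is used, is the lower bound $\ddot z\ge\delta$ of the first step; after that the argument is the same soft convexity/Rolle reasoning already used for the different-modes lemma, and I expect that first step — in particular the choice of $\eta_0$ small enough to beat the $O(\eta_0^2)$ error term while keeping $c(\eta_0)$ under control — to be the main thing to get right.
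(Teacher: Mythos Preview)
Your argument is correct and follows essentially the same approach as the paper's sketch: both rely on uniqueness for the Hamiltonian flow \eqref{traj} to guarantee that $(x_1(t),\xi_1(t))\neq(x_2(t),\xi_2(t))$ for all $t$, which yields a uniform lower bound on $|\xi_1-\xi_2|$ where $|x_1-x_2|$ is small and hence the convexity inequality $\ddot z\ge\delta$ that drives the rest. You have in fact carried out in full the details the paper only sketches (referring to \cite{CF-p}); the one place to tighten is the compactness step showing $c(0^+)>0$, which is cleanest by noting that $t\mapsto|x_1-x_2|^2+|\xi_1-\xi_2|^2$ is continuous, positive on $[0,T]$, hence bounded below by some $m>0$, so $|x_1-x_2|^2\le\eta_0^2<m/2$ forces $|\xi_1-\xi_2|^2\ge m/2$.
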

\begin{proof}[Sketch of the proof]
The proof is based on Lemma 6.2 of \cite{CF-p}. In this case, we consider classical trajectories built with the same eigenvalue, 
which is similar to the scalar case, with a scalar potential. It has to be noticed that for 
$$(x_1(0),\xi_1(0)) \neq (x_2(0),\xi_2(0)),$$
we have 
$$(x_1(t),\xi_1(t)) \neq (x_2(t),\xi_2(t))$$
for all times $t$, since the trajectories solve the same ODE system. Therefore, on $[0,T]$, the curves $x_1(t)$ and $x_2(t)$ cross on a 
finite number of isolated points, where $\xi_1(t)\neq \xi_2(t)$. Then, the control of the quantity $|\xi_1(t)- \xi_2(t)|$ in \eqref{ddot} 
follows without any assumption (See \cite{CF-p} for details).
\end{proof}

\begin{remark}
To complete the proof of Propositions \ref{superposition1} and \ref{superposition2}, it remains to perform a bootstrap argument, 
similar to the one in the proof of Theorem \ref{main}, (see \cite{CF-p} for details) which gives a finer condition on $\gamma$:
$$\eps^{\gamma - d/4} \ll \eps^{-d/8}, $$
and it is equivalent to $\gamma > d/8$ which is compatible with $\gamma < 1/2$. 
\end{remark}
\smallbreak
In both situations, for large time case, we cannot use the same method as in~\cite{CF-p} for a scalar potential, 
or in \cite{Adiab}, in the one-dimensional case, to find the number of maximal intervals.
\begin{remark}
\label{RmqNeps}
However, assuming \eqref{B} is satisfied; and
$$ | N^\eps(t)| \lesssim e^{Ct},$$
where $N^\eps(t)$ is defined in \eqref{Neps}, then one can prove the following result :
there exists $C>0$ independent of $\eps$ such that
$$\sup_{t \leq C \log \log (\eps^{-1})} \| w^\eps (t)\|_{H_\eps^1} \Tend\eps 0 0.$$ 
\end{remark}
\begin{remark}
Let us notice that if the approximation of Theorem \ref{main} is valid up to a time $t= C \log (\eps^{-1})$, 
then, Theorems \ref{superposition1} and \ref{superposition2} will be also valid up to an analogue time.
\end{remark}

\appendix
\section{Strichartz estimates}
\label{StrichartzAnnexe}
In view of Remark 4 of \cite{FerRou}, Proposition 3 of \cite{FerRou} writes:
\begin{proposition}
\label{A1}
Consider $T>0$ and $(p,q)$ an admissible pair. Then, there exists a constant $C=C(q)$ such that
\begin{equation*}
 \left\|e^{i\frac{t}{\eps}P(\eps)}u_0^\eps \right\|_{L^p([0,T],L^q(\R^d))} \leq C \eps^{-1/p} \|u_0^\eps\|_{L^2(\R^d)},
\end{equation*}
\end{proposition}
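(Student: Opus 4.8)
The plan is to deduce Proposition~\ref{A1} from the analysis of \cite{FerRou}, by checking that Assumption~\ref{assumption} places us inside their framework and that the spatial cutoff carried by their general statement can be dispensed with in our setting. First I would recall the nature of the estimate: with $\eps$ regarded as a semiclassical parameter, $e^{i\frac{t}{\eps}P(\eps)}$ is the semiclassical Schr\"odinger group attached to $P(\eps)=-\tfrac{\eps^2}{2}\Delta\,\mathrm{Id}+V(x)$, and Proposition~\ref{A1} is precisely a semiclassical Strichartz estimate on a fixed time interval, with the loss $\eps^{-1/p}$ already visible in the free case: for $e^{i\frac{t}{\eps}(-\tfrac{\eps^2}{2}\Delta)}=e^{-i\frac{\eps t}{2}\Delta}$ the time rescaling $\tau=\eps t/2$ turns the fixed interval $[0,T]$ into $[0,\eps T/2]$ and produces the Jacobian factor $\eps^{-1/p}$, after which the global free Strichartz estimate finishes the free part with no further loss. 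The role of point $(i)$ of Assumption~\ref{assumption} is to supply exactly the long-range decay of $V$ and of its derivatives that \cite{FerRou} needs in order to run the resolvent (limiting-absorption, Mourre-type) estimates on which their local smoothing estimate, and hence the whole argument, rests.

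Second, point $(ii)$ guarantees a smooth, bounded, global spectral decomposition $V=\lambda_+\Pi_++\lambda_-\Pi_-$ with $\Pi_\pm\in C^\infty_b(\R^d,\C^{2,2})$, which is what one uses to conjugate the system to an almost diagonal one. The simplification specific to our hypotheses is point $(iii)$: outside the compact set $K$ the projectors $\Pi_\pm$ are constant, so there $e^{i\frac{t}{\eps}P(\eps)}$ splits into the two decoupled scalar semiclassical propagators $U^\eps_\pm(t)=e^{i\frac{t}{\eps}p_\pm(\eps)}$, with $p_\pm(\eps)=-\tfrac{\eps^2}{2}\Delta+\lambda_\pm$. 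For these scalar operators the semiclassical Strichartz estimate with no cutoff is classical: one combines the global free Strichartz estimate with the local smoothing estimate for the $\lambda_\pm$-dynamics (itself a consequence of the non-trapping resolvent bounds), exactly as in the scalar semiclassical literature.

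Third, the genuinely matrix-valued part of the propagator is localized in $K$. Here I would introduce $\vartheta\in C_0^\infty(\R^d)$ with $\vartheta\equiv 1$ on a neighbourhood of $K$, split $\mathrm{Id}=\vartheta+(1-\vartheta)$, estimate the $(1-\vartheta)$ contribution by the decoupled scalar bounds of the previous step, and treat the $\vartheta$-contribution --- where the off-diagonal coupling and the non-commutation of $\Pi_\pm$ with $P(\eps)$ live --- perturbatively: an $\eps$-uniform local smoothing estimate near $K$ feeds a Duhamel / $TT^{*}$ argument in which the smooth, compactly supported coupling is an acceptable error. This is the content of Remarks~4 and~6 of \cite{FerRou}, which explain that the spatial truncation in their Proposition~3 serves only to isolate the region of possible eigenvalue crossings and may be dropped once the eigenprojectors are constant at infinity; point $(iii)$ puts us in exactly that situation, and in the crossing-free case no loss beyond $\eps^{-1/p}$ is incurred.

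The main obstacle is precisely this last point: controlling the off-diagonal coupling inside $K$, and the non-commutation $[\Pi_\pm,P(\eps)]\neq 0$, without spoiling the semiclassical gain. Everything ultimately rests on the non-trapping character of the classical flows generated by $\lambda_\pm$, which under Assumption~\ref{assumption} yields the limiting-absorption resolvent estimates used in \cite{FerRou} (compare \cite{Jecko2,Jecko3}); granting that input, the passage to Proposition~\ref{A1} is a matter of assembling the decoupled scalar Strichartz estimates with the perturbative treatment of the localized coupling along the lines above.
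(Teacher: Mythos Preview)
Your proposal is aligned with the paper's approach: both deduce the estimate from \cite{FerRou}, and in particular both invoke Remark~4 of \cite{FerRou} to explain why, under point~$(iii)$ of Assumption~\ref{assumption} (constant eigenprojectors outside a compact set), the spatial truncation in Proposition~3 of \cite{FerRou} can be dropped. The paper in fact gives no argument beyond that citation --- it simply restates Proposition~3 of \cite{FerRou} as Proposition~\ref{A1} ``in view of Remark~4 of \cite{FerRou}'' --- so your sketch of the mechanism (scaling in the free case, long-range resolvent estimates from point~$(i)$, smooth spectral decomposition from point~$(ii)$, decoupling outside $K$ from point~$(iii)$, perturbative treatment of the localized coupling) is more detailed than what the paper provides, but entirely consistent with it.

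One small caution: you close by saying that everything rests on the non-trapping character of the classical flows for $\lambda_\pm$, and cite \cite{Jecko2,Jecko3} for the resolvent input. The paper does not explicitly isolate a non-trapping hypothesis in Assumption~\ref{assumption}; it simply asserts that the long-range condition~$(i)$ is what is used in \cite{FerRou} to obtain the resolvent estimates (see the remark following Theorem~\ref{StrichartzEstimates} and the note after Corollary~1 in Appendix~\ref{StrichartzAnnexe}). So while your heuristic is the right one, be careful not to introduce an extra hypothesis the paper has not stated; the intended logic is that Assumption~\ref{assumption} places one directly inside the framework of \cite{FerRou}, and the proposition is then quoted rather than reproved.
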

\noindent
and Corollary 1 writes:
\begin{corollary}
Consider $T>0$ and $(p,q)$ an admissible pair. Then, there exists a constant $C=C(q)$ such that
\begin{equation*}
 \left\|\int_0^t e^{i\frac{t-s}{\eps}P(\eps)} f^\eps(s) ds\right\|_{L^p([0,T],L^q(\R^d))} \leq 
C \eps^{-1/p} \|f^\eps\|_{L^1([0,T],L^2(\R^d))}.
\end{equation*}
\end{corollary}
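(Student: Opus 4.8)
The plan is to obtain this inhomogeneous (retarded) estimate directly from the homogeneous estimate of Proposition \ref{A1} by an elementary application of Minkowski's integral inequality. The point is that the source term $f^\eps$ is measured in $L^1$ with respect to time, so one does not need a $TT^*$ argument or the Christ--Kiselev lemma: integrating the homogeneous bound in the source variable is enough.

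Concretely, I would first rewrite the Duhamel term over the whole interval,
\[
\int_0^t e^{i\frac{t-s}{\eps}P(\eps)}f^\eps(s)\,ds = \int_0^T \mathbf{1}_{\{s\le t\}}\,e^{i\frac{t-s}{\eps}P(\eps)}f^\eps(s)\,ds,
\]
and apply Minkowski's integral inequality in the Banach space $L^p([0,T],L^q(\R^d))$:
\[
\left\|\int_0^t e^{i\frac{t-s}{\eps}P(\eps)}f^\eps(s)\,ds\right\|_{L^p([0,T],L^q)} \le \int_0^T \big\|e^{i\frac{t-s}{\eps}P(\eps)}f^\eps(s)\big\|_{L^p_t([s,T],L^q)}\,ds .
\]
For each fixed $s\in[0,T]$, the change of variables $\tau=t-s$ identifies $\big\|e^{i\frac{t-s}{\eps}P(\eps)}f^\eps(s)\big\|_{L^p_t([s,T],L^q)}$ with $\big\|e^{i\frac{\tau}{\eps}P(\eps)}f^\eps(s)\big\|_{L^p_\tau([0,T-s],L^q)}$; enlarging the time interval to $[0,T]$ only increases this quantity, and Proposition \ref{A1}, applied with initial datum $u_0^\eps=f^\eps(s)$, bounds it by $C(q)\,\eps^{-1/p}\,\|f^\eps(s)\|_{L^2}$, uniformly in $s$. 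Integrating in $s$ yields $C(q)\,\eps^{-1/p}\,\|f^\eps\|_{L^1([0,T],L^2)}$, which is the claim.

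There is no genuine obstacle here; the only points to verify are that Minkowski's integral inequality is applicable (it is, $L^p([0,T],L^q(\R^d))$ being a Banach space for admissible $(p,q)$) and that the Strichartz constant on $[0,T]$ controls the one on every subinterval $[0,T-s]$, which is immediate from the monotonicity of the $L^p_t$-norm under restriction of the time interval. I would stress that this elementary scheme produces exactly the stated estimate, with the source taken in $L^1_t L^2_x$; the sharper bound \eqref{Strichartz2} of Theorem \ref{StrichartzEstimates}, whose source lies in $L^{p_2'}_t L^{q_2'}_x$, is the one that really requires a $TT^*$ computation combined with the Christ--Kiselev lemma to absorb the retarded cutoff $\mathbf{1}_{\{s\le t\}}$.
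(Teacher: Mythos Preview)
Your argument is correct: for a source in $L^1_tL^2_x$, Minkowski's integral inequality followed by the homogeneous estimate of Proposition~\ref{A1} (applied at each fixed $s$ with initial datum $f^\eps(s)$) yields exactly the stated bound, and the time-translation/restriction step is harmless.

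As for comparison, the paper does not actually supply a proof of this corollary; it is simply quoted as Corollary~1 of \cite{FerRou}, and the paper then moves on to derive the general inhomogeneous estimate \eqref{Strichartz2} via Christ--Kiselev and duality. Your elementary Minkowski argument is the standard way to obtain the $L^1_tL^2_x$ case directly from the homogeneous bound, and you correctly point out that it does not extend to general dual exponents---that is precisely why the paper invokes Christ--Kiselev to get \eqref{Strichartz2}. So your write-up is both correct and in full agreement with the paper's surrounding discussion.
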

Note that these results of \cite{FerRou} crucially use the long range property of $V$, which allows to prove 
resolvent estimates. Proposition \ref{A1} gives the first Strichartz estimate \eqref{Strichartz1}. 
Let us prove that we also have \eqref{Strichartz2}, namely:
\begin{equation*}
\left\|\int_{0}^{t} e^{i \frac{t-s}{\eps}P(\eps)}f^\eps(s) ds \right\|_{L^{p_1} ([0,T], L^{q_1}(\R^d))} 
\leq C \eps^{-1/p_1 - 1/p_2} \|f^\eps \|_{L^{p'_2} ([0,T], L^{q'_2}(\R^d))},
\end{equation*}
where $(p_1,q_1)$ and $(p_2,q_2)$ are admissible pairs, and $C$ independent of $\eps$.
\\
\noindent
By Christ-Kiselev's theorem, it is sufficient to estimate the $L^{p_1}([0,T],L^{q_1})-$norm of
$$h^\eps(t) :=\int_0^T e^{i\frac{t-\tau}{\eps}P(\eps)} f^\eps(\tau) d\tau.$$
Using \eqref{Strichartz1}, we obtain
\begin{align*}
\| h^\eps\|_{L^{p_1}([0,T],L^{q_1})} & \leq C(q_1) \eps^{-1/p_1}\|h^\eps(0) \|_{L^2}, \\
& \leq C(q_1) \eps^{-1/p_1} \left\|\int_0^T e^{-i\frac{s}{\eps}P(\eps)} f^\eps(s) ds \right\|_{L^2}.
\end{align*}
The dual inequality of \eqref{Strichartz1} for $(p_2,q_2)$ admissible pair gives
$$\left\|\int_0^T e^{-i\frac{\tau}{\eps}P(\eps)} f^\eps(\tau) d\tau \right\|_{L^2} \leq C(q_2) \eps^{-1/p_2}
\|f^\eps \|_{L^{p'_2}([0,T],L^{q'_2})}. $$
Combining these estimates, we finally obtain \eqref{Strichartz2}.

\section{Global existence of the exact solution}
In view of \cite{Ca-p}
Section \ref{StrichartzAnnexe} allows us to prove global existence and uniqueness of the solution of \eqref{NLS0}-\eqref{data}, 
for fixed $\eps>0$:
\begin{proposition}
 \label{existence}
If $V$ satisfies Assumption \ref{assumption}, and $\psi^\eps_0 \in L^2(\R^d)$, there exists a unique, global, solution to 
\eqref{NLS0}-\eqref{data}
$$ \psi^\eps \in \mathcal{C} \left( \R, L^2(\R^d)\right)  \cap L_{loc}^{8/d}\left( \R, L^4(\R^d)\right).$$
Moreover, the $L^2-$norm of $\psi^\eps$ does not depend on time
$$\| \psi^\eps(t)\|_{L^2(\R^d)} = \| \psi_0^\eps\|_{L^2(\R^d)}, \quad \forall t \in \R. $$
We denote by $E(t)$ the energy, given by :
$$E(t) = \dfrac{1}{2} \| \nabla u(t)\|_{L^2}^2 + \dfrac{\Lambda}{4} \| u(t) \|_{L^4}^4 + \dfrac{1}{2} \int_{\R^d} 
\left\langle Q^+(t)x;x \right\rangle |u(t,x)|^2 dx.$$
This quantity does not depend on time : $E(t) = E(0), \quad \forall t \in \R$.
\end{proposition}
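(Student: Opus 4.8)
The plan is to run the standard Strichartz‑based local theory together with the mass and energy conservation laws, exactly as in \cite{Ca-p}; for fixed $\eps>0$ the Strichartz estimates needed are those of Appendix~\ref{StrichartzAnnexe}, together with their analogues for the $x$‑derivative. By Duhamel's formula, the Cauchy problem \eqref{NLS0}--\eqref{data} is equivalent to
\[
\psi^\eps(t)=e^{-i\frac{t}{\eps}P(\eps)}\psi^\eps_0-i\Lambda\eps^{\beta-1}\int_0^t e^{-i\frac{t-s}{\eps}P(\eps)}\,|\psi^\eps(s)|_{\C^2}^2\psi^\eps(s)\,ds .
\]
Since the data \eqref{data} is Schwartz, I would set up the contraction in $H^1$ rather than in $L^2$: for $d\le 3$ the cubic nonlinearity is $H^1$‑subcritical, so, differentiating the equation once in $x$ and controlling the source term by $H^1\hookrightarrow L^4$ together with H\"older in space ($|v|^2|\nabla v|\in L^{4/3}$ when $v,\nabla v\in L^4$) and in time on a short interval $[0,T]$ — the time‑H\"older step yielding a positive power $T^{\theta}$, with $\theta=(4-d)/4$ — one obtains a contraction in the space of functions $v$ with $v,\nabla v\in\mathcal C([0,T],L^2)\cap L^{8/d}([0,T],L^4)$, where $(8/d,4)$ is the admissible pair used throughout the paper. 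This produces a unique maximal solution with the usual blow‑up alternative, the local existence time depending (for fixed $\eps$) only on an upper bound for $\|\psi^\eps_0\|_{H^1}$; uniqueness in the weaker class $\mathcal C(I,L^2)\cap L^{8/d}_{\mathrm{loc}}(I,L^4)$ then follows from \eqref{Strichartz1}--\eqref{Strichartz2}. The potential $V$ creates no difficulty here: by Assumption~\ref{assumption}~$(i)$ it is bounded with bounded derivatives, and it is already built into the estimates of \cite{FerRou}.

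Next, the conservation laws. Taking the $L^2(\R^d,\C^2)$ scalar product of \eqref{NLS0} with $\psi^\eps$ and keeping the imaginary part, the linear term contributes nothing since $P(\eps)$ is self‑adjoint, and the cubic term contributes nothing since $|\psi^\eps|_{\C^2}^2\psi^\eps$ is parallel to $\psi^\eps$; hence $\frac{d}{dt}\|\psi^\eps(t)\|_{L^2}^2=0$. Pairing instead with $\d_t\psi^\eps$ and keeping the real part gives $\frac{d}{dt}\mathcal E^\eps(t)=0$ with (up to fixed numerical constants)
\[
\mathcal E^\eps(t)=\frac{\eps^2}{2}\,\|\nabla\psi^\eps(t)\|_{L^2}^2+\frac12\int_{\R^d}\langle V(x)\psi^\eps(t,x),\psi^\eps(t,x)\rangle_{\C^2}\,dx+\frac{\Lambda\,\eps^{\beta}}{4}\int_{\R^d}|\psi^\eps(t,x)|_{\C^2}^4\,dx ,
\]
the display in the statement (written with $\tfrac12\langle Q^+x,x\rangle$ in place of $V$ and $u$ in place of $\psi^\eps$) being the corresponding identity for the profile equation \eqref{profil}. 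As usual, both identities are first obtained for smooth, fast‑decaying data by a direct integration by parts and then extended to general data by the continuous dependence established above.

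Finally, globalization — the only place where the hypothesis $\Lambda\ge0$ is used, and where the one genuine difficulty lies. Because the nonlinearity is $L^2$‑supercritical, $\mathcal E^\eps$ is \emph{not} coercive on its own; it is the defocusing sign of the quartic term that makes it so. Combining conservation of $\mathcal E^\eps$, conservation of the mass, and $\|V\|_{L^\infty}<\infty$, one gets, for every $t$ in the interval of existence,
\[
\eps^2\,\|\nabla\psi^\eps(t)\|_{L^2}^2\le 2\,\mathcal E^\eps(0)+\|V\|_{L^\infty}\,\|\psi^\eps_0\|_{L^2}^2=:C_\eps<\infty .
\]
Since (for fixed $\eps$) the local existence time depends only on a bound for $\|\psi^\eps(t)\|_{H^1}$, this \emph{a priori} estimate forbids $T_{\max}<\infty$; running the same argument backward in time then yields a solution on all of $\R$, with mass and energy conserved, and in particular $\psi^\eps\in\mathcal C(\R,L^2)\cap L^{8/d}_{\mathrm{loc}}(\R,L^4)$. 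The crux is exactly this coercivity step: when $\Lambda<0$ no such bound is available and finite‑time blow‑up genuinely occurs for suitable data (see \cite{Ca-p} and \cite{Glassey}), so the assumption $\Lambda\ge0$ is essential rather than technical.
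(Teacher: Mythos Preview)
Your proposal is correct and follows essentially the same approach as the paper's own proof, which is only a sketch: Strichartz estimates from Appendix~\ref{StrichartzAnnexe} yield local existence by a fixed point argument, and conservation of mass and energy (with $\Lambda\ge 0$ and $V$ bounded) provides the a priori $H^1$ bound that forces globalization. Your write-up is in fact considerably more detailed than the paper's sketch; the one small discrepancy is that the proposition nominally assumes only $\psi^\eps_0\in L^2$, whereas your $H^1$ contraction and the energy argument implicitly require $H^1$ data --- but this is harmless since the actual data \eqref{data} is Schwartz, and the paper's sketch glosses over the same point.
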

\begin{proof}[Sketch of the proof]
From the above-mentionned results, it follows that local in time Strichartz estimates are available. 
Therefore, using a fixed point argument, one can prove local existence of the solution. 
Then, using the conservation of the $L^2-$norm and of the energy, one can infer that the solution is global. 
See \cite[Remark 5]{FerRou} and \cite{Glassey} for the details.
\end{proof}
\noindent
For fixed $\eps$, it is actually possible to prove global existence of the solution under weaker assumptions : 
$V$ has to be \textit{at most quadratic} :
assuming $\rho$ and $\rho_0$ are at most quadratic and $\om$ is bounded with bounded derivatives, 
the author of \cite{Ca-p} obtains global existence of 
the solution.
\\[3mm]
\textbf{Acknowledgments.} The author wishes to thank Thomas Duyckaerts and Clotilde Fermanian 
for suggesting this problem and for many helpful advices during the preparation of the paper. 
The author also acknowledges financial support by the grant \textit{ERC DISPEQ}.

\bibliographystyle{amsplain}
\bibliography{biblio}
\end{document}